\documentclass[11pt]{amsart}
\usepackage{amsmath}
\usepackage{setspace}
\usepackage{graphicx}
\usepackage{amssymb,amsthm}
\usepackage{hyperref}
\usepackage{esint}
\usepackage{epstopdf}
\usepackage{color}
\usepackage{url}
\usepackage[top=1in, left=1in, right=1in, bottom=1in, marginpar=2cm]{geometry}

\newtheorem{theorem}{Theorem}[section]
\newtheorem{lemma}[theorem]{Lemma}

\newtheorem{remark}[theorem]{Remark}

\def\R{{\mathbb R}}

\def\h1{\> d{\mathcal H}^1}

\def\a{\alpha}

\def\e{\varepsilon}
\def\d{\delta}

\def\t{\tau}

\def\w{\omega}

\def\1{\left(}
\def\2{\right)}
\def\3{\left\{}
\def\4{\right\}}
\def\8{\infty}
\def\sm{\setminus}
\def\ss{\subseteq}

\usepackage{enumerate}
\usepackage{marginnote}

\newcommand{\ufree}{u_{\text{free}}}

\begin{document}

\title{A complete classification of singularities for radial solutions of the Stefan problem}

\author{Dennis Kriventsov}
\address[Dennis Kriventsov]{Rutgers University, Piscataway, NJ}
\email{dnk34@math.rutgers.edu}

\author{Georg S. Weiss}
\address[Georg S. Weiss]{Faculty of Mathematics, University of Duisburg-Essen, Germany}
\email{georg.weiss@uni-due.de}

\begin{abstract}
We investigate singularities of radial solutions of the classical two-phase and one-phase Stefan problem. In the remarkable result \cite{HV}, M.A. Herrero and J.J.L. Velazquez show existence of a solution
such that the free boundary described by $g(t)$ satisfies in dimension $n\geq 3$
$$g(t)=c_n \sqrt{t} |\log t|^{\frac{-1}{n - 2}} (1 + o(1)), t\to 0,$$
and they conjecture (\cite[p. 2]{HV}) that these asymptotics hold in general.
Here we prove the conjecture for all radial solutions,
which means {\em complete asymptotic rigidity} in that class. 
  
In order to obtain these precise asymptotics for the two-phase Stefan problem, for which frequency formulas and monotonicity formulas are unknown,
we apply the {\em Potential Reduction to ODE method} introduced in \cite{li_lin_weiss_xie} for the Neumann Bernoulli problem.
The philosophy of that method is that the asymptotic shape of the free surface is governed by a precise ordinary differential
equation. The fact that we can apply it in this paper to an equation of different type with quasilinear principal part,
which is in our regime not the perturbation of a linear operator,
confirms the versatility of the method.
\end{abstract}
\maketitle
\begingroup
\small
\setstretch{0.5}
\tableofcontents
\endgroup
\section{Introduction}

The Stefan problem is the parabolic PDE
\[
	\theta(u)_t - \Delta u = 0
\]
where $\theta$ is given by
\[
	\theta(s) = \begin{cases}
		s + 1, & s > 0 \\
		s, & s \leq 0.
	\end{cases}
\]
Here $u : (T_0, T_1] \times B_R \rightarrow \R$ models the temperature of a fluid undergoing a phase transition, with $\{u \leq 0\}$ representing the solid (``ice'') phase and $\{u > 0\}$ the liquid (``water'') phase. For a \emph{classical solution}, a distributional solution $u$ with a smooth interface and $u$ smooth in each phase, the PDE gives a local condition relating the speed of the interface $\{u = 0\}$ (oriented so that positive speed means melting) and the gradient of $u$ from each side:
\[
	V = \frac{\partial_t u_+}{|\nabla u_+|} = |\nabla u_+| - |\nabla u_-|.
\]
Here the subscripts $u_\pm$ refer to taking derivatives from the $\{u > 0\}$ and $\{u \leq 0\}$ sides, respectively; the derivatives from the two sides will not be the same. The \emph{two-phase} Stefan problem has $u_- < 0$ on the interior of $\{u \leq 0\}$, while the \emph{one-phase} Stefan problem instead has $u = 0$ there and so nonnegative globally, essentially treating a simplified model where the interior temperature of the ice is not tracked.

In this paper we study radial solutions of the two-phase and the one-phase problem, where $u(t, x) = u(t, |x|)$, and in particular their behavior at a \emph{melting singularity}: a time $t$ at which $u(t, 0) = 0$ and $u(t, |x|) > 0$ for $|x| > 0$. If a solution is extended past this time (and excluding boundary effects) it will contain only the water phase and simply solve the heat equation, so a melting singularity corresponds to the extinction of the ice phase at a single point. Radial solutions with a connected ice phase containing $x = 0$, which is the setting we will work in, are classical except at one melting singularity.

For concreteness, let us assume there is a melting singularity at time $t = 0$, and parametrize the interface as $|x| = g(-t)$.

The Stefan model is well-known for not admitting useful self-similar solutions with respect to the parabolic rescaling. Indeed, the nonlinearity is only compatible with $0$-homogeneous self-similar profiles, which leads to only trivial solutions. Therefore the question of the behavior of $g$ as $t\rightarrow 0$ is not a priori clear, and the goal of this paper is to address it.

In \cite{HV}, M.A. Herrero and J.J.L. Velazquez construct an ancient (i.e. backward-entire) radial solution to the Stefan problem with a melting singularity, for which
\[
	g(t) = 
	\begin{cases}
		c_n \sqrt{t} |\log t|^{\frac{-1}{n - 2}} (1 + o(1)), & n \geq 3 \\
		\sqrt{t} e^{- \sqrt{2}/2 \sqrt{|\log t|} + O(1) }, & n = 2.
	\end{cases}
\]
The 2D bound is stated here in a slightly corrected form, see \cite[pg. 3]{HR} for a discussion. Their construction, using the method of matched asymptotic expansions, suggested to them that this was the only possible rate (see the next subsection about the other solutions constructed there), and they state this as an open question on page 2. Our main theorem answers this question:

\begin{theorem}\label{thm:main}
	Let $u : (-T, 0] \times B_R \rightarrow \R$ be a radial solution to the (one or two-phase) Stefan problem with a melting singularity at $(0, 0)$ (more precisely, assume it satisfies (1-4) in Section \ref{sec:prelim}). Then if $n \geq 3$,
	\[
		g(t) =  c_n \sqrt{t}|\log t|^{- \frac{1}{n-2}} (1 + o_t(1)) ,
	\]
	while if $n = 2$,
	\[
		g(t) = \sqrt{t} e^{-\sqrt{-\frac{1}{2}\log t} + o(\sqrt{-\log t})}.
	\]
	The constant $c_n$ is explicit:
	\[
		c_n = \left(\frac{2 (4\pi)^{n/2}}{(n - 2)^2\w_{n-1}}\right)^{\frac{1}{n-2}}.
	\]
\end{theorem}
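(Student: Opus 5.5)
The plan is to pass to self-similar variables and reduce the question to an ODE for the free boundary radius in logarithmic time. Set $\tau = -\log(-t)$ and $y = x/\sqrt{-t}$, and write $w(\tau,y) = u(t,x)$. In these variables the water phase solves an Ornstein--Uhlenbeck-type equation $w_\tau - \Delta w + \tfrac{y}{2}\cdot\nabla w = 0$ outside a shrinking ball of radius $\rho(\tau) = g(-t)/\sqrt{-t}$. The first step is to show $\rho(\tau)\to 0$ and that $w$ converges (in the Gaussian-weighted $L^2$ space, locally uniformly away from $0$) to a constant $w_\infty > 0$. The argument here is that the only $0$-homogeneous profiles are trivial, so $u(0,x)>0$ for $x\neq 0$ forces the blow-down limit to be a positive constant, namely the lowest eigenmode of the drift operator. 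For the two-phase problem, the ice phase then lives in a ball of radius $\rho\to0$. By comparison with the one-phase problem, its temperature is $o(1)$ in gradient relative to the water side, so $|\nabla u_-|$ contributes only lower-order terms to the Stefan condition.

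The second step is the potential reduction. Near the ice ball the water temperature is approximately harmonic (quasi-static), because the time scale of $\rho$ is slow. So $w(\tau,y)\approx w_\infty\big(1-(\rho/|y|)^{n-2}\big)$ in the inner region $\rho\lesssim|y|\ll 1$, with matching to the constant in the outer region. This gives a flux $|\nabla w|\approx w_\infty (n-2)/\rho$ at the interface. The interface condition in similarity variables reads $V = -\rho' - \rho/2$ (melting means inward motion), together with $V = |\nabla w_+| - |\nabla w_-|$, where the speed is rescaled consistently with the factor $\sqrt{-t}$. The key point is that $w_\infty$ is not fixed: heat is absorbed by melting. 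A global identity does the bookkeeping. Testing the equation $\theta(u)_t=\Delta u$ against the Gaussian weight $G(t,x)=(4\pi(-t))^{-n/2}e^{-|x|^2/(4(-t))}$ (the backward heat kernel, for which the two-sided integration by parts closes) shows that $\frac{d}{d\tau}\int w\,G\,dy$ equals minus the Gaussian-weighted latent heat release, about $-G(0)\,\omega_{n-1}\rho^{n-1}\rho'$. Because the Gaussian mass of the ball is tiny, the constant mode gives $w_\infty(\tau)\approx$ const plus corrections of order $\rho^{n}$. The ODE instead comes from projecting $w - w_\infty$ onto the inner harmonic correction. The $(\rho/|y|)^{n-2}$ tail, integrated against the Gaussian, produces a correction to the constant mode of size $c\,\rho^{n-2}$. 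Compatibility of this with the $\tau$-evolution of the constant mode, which decays like $e^{0\cdot\tau}$ but is forced, yields an equation of the form
\[
\frac{d}{d\tau}\big(\rho^{n-2}\big)\approx -\frac{(n-2)^2\omega_{n-1}}{2(4\pi)^{n/2}}\rho^{2(n-2)}.
\]
Integrating gives $\rho^{n-2}\sim \frac{2(4\pi)^{n/2}}{(n-2)^2\omega_{n-1}}\,\tau^{-1}$, which is exactly $c_n|\log t|^{-1/(n-2)}$. For $n=2$ the logarithmic fundamental solution replaces $\rho^{n-2}$ by $1/|\log\rho|$. The analogous equation is $\frac{d}{d\tau}(1/|\log\rho|)\approx -c/|\log\rho|^2\cdot(\ldots)$, which leads to $|\log\rho|^2\sim \tau/2$ and hence the stated $e^{-\sqrt{\tau/2}}$ asymptotics with only an $o(\sqrt\tau)$ error.

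The main obstacle is rigorously justifying this reduction. Specifically, one must prove that the error between $w$ and the quasi-static ansatz "constant mode plus harmonic inner correction" is genuinely smaller than the $\rho^{2(n-2)}$ driving term, uniformly in $\tau$. I would attempt this through a spectral decomposition of the drift Laplacian in Gaussian $L^2$. The constant mode has eigenvalue $0$ and the rest of the radial spectrum has a gap of at least $1$. With this, the higher modes decay like $e^{-\tau}$, which is exponentially small compared to the polynomial rates $\tau^{-2}$, provided the interface acts as a source small in $H^{-1}$ of the weighted space. In the two-phase case, the nonlinear (quasilinear) structure from $\theta$ enters only through the interface term and the ice temperature. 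I would control these by barriers: explicit radial sub- and supersolutions built from $w_\infty(1-(\rho_\pm/|y|)^{n-2})$ with slightly perturbed $\rho_\pm$ solving the ODE with constants $c_n(1\pm\e)$. This trapping argument is also how I would turn the formal ODE into the two-sided asymptotics with the $o_t(1)$ error.
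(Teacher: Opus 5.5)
\textbf{There is a genuine gap: your first step is false, and the ODE does not follow from the mechanism you describe.}

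\emph{The blow-down limit is zero, not a positive constant.} Since $u(t,0)\le 0$ for $t<0$, $u(0,x)>0$ for $x\neq 0$, and $u$ is continuous, we have $u(0,0)=0$. Hence $w(\tau,\cdot)\to 0$ locally uniformly and in Gaussian $L^2$; there is no positive limit $w_\infty$. Your own second step shows this must be so. With the inner profile $a(1-(g/|x|)^{n-2})$, the Stefan condition gives $gg'=(n-2)a$. So $a\to w_\infty>0$ would force $g^2(s)\approx 2(n-2)w_\infty s$, i.e.\ $\rho^2\to 2(n-2)w_\infty$, contradicting $\rho\to 0$ (Lemma \ref{lem:KP}).

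\emph{Consequence for the ODE.} The bookkeeping ``constant mode $\approx$ const $+\,O(\rho^n)$'' is therefore wrong. Your displayed ODE does coincide with the formal ODE behind the result, but it is asserted rather than derived. The correct closure uses the Gaussian identity you wrote down, integrated from $\tau$ to $\infty$ with terminal value $u(0,0)=0$. It says the constant mode at time $-t$ is exactly the not-yet-absorbed weighted latent heat,
$\frac{\omega_{n-1}}{(4\pi)^{n/2}}\int_0^t g^{n-1}(s)g'(s)s^{-n/2}e^{-g^2(s)/(4s)}\,ds=\frac{\omega_{n-1}}{(4\pi)^{n/2}}A(t)(1+o(1))$.
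This tends to $0$ and must be matched with the inner amplitude $gg'/(n-2)\approx g^2/(2(n-2)t)$. That matching, $tA(t)/g^2(t)\to\frac{(4\pi)^{n/2}}{2(n-2)\omega_{n-1}}$, is exactly Theorem \ref{thm:asymptotic}. With $h=g^2/t$ and $B=\int_\tau^\infty h^{n/2}$ (so $A=\frac12 B+\frac1n h^{n/2}$), it becomes $B\approx\frac{(4\pi)^{n/2}}{(n-2)\omega_{n-1}}h$ and $\dot B=-h^{n/2}$, which integrates to $c_n$.

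\textbf{Your plan for rigor also misses where the difficulty lies.}

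\emph{Spectral gap.} Relating the Gaussian mean of $w$ to the condition $w=0$ on $\{|y|=\rho\}$ is an inner problem on the layer $\rho\le|y|\lesssim 1$, around a Dirichlet sphere shrinking to a point. The harmonic correction there is forced continuously at the quasi-static level and is as large as the amplitude near the interface. It is not a transient higher mode, so the spectral gap of the drift Laplacian gives no control of it.

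\emph{Barriers.} A trapping argument is also problematic. The amplitude is slaved to the remaining history through $A(t)$, so barriers with prescribed amplitude would have to be ordered with $u$ at an initial time and melt exactly at $(0,0)$. You give no way to arrange this.

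\emph{Missing a priori facts.} In the two-phase case the reduction needs $g'>0$, negligibility of $\nabla u_-$, and control of $\dot\rho$. Comparison with the one-phase problem gives no gradient bounds on the ice side. The paper proves these facts with a Bernstein estimate including a curvature term, Hopf-type and shrinking-domain barriers, and the doubling bound of Lemma \ref{lem:doubling}.

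\textbf{How the paper avoids the inner profile.} It evaluates the exact representation $u=u_{\mathrm{free}}-K$ at free boundary points, where $u=0$, so $K(t,g(t))-K(0,0)=O(t)$. It then splits $K$ into three parts:
the near-field part, computed explicitly after a time average using $\int_0^\infty J(1,1,\tau)\,d\tau=\frac{1}{n-2}$;
the past part, shown to be negligible;
and the future part, which is $\frac{\omega_{n-1}}{(4\pi)^{n/2}}A$.
Only this integrated relation is used afterwards, so no pointwise control of $\dot\rho$ is ever needed.
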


We conjecture that the method used to prove Theorem \ref{thm:main} can give even finer rates (in the 2D case) as well as an {\em asymptotic expansion for $g$ up to higher order}. We also note that due to the minimal assumptions in the theorem, it can be used
to show {\em existence of solutions} with the above asymptotics: similarly as in \cite{HR} one can show existence of
solutions on bounded
domains which have an extinction point
 (by solving the initial-boundary value problem with appropriate data, using a barrier argument like in Lemma \ref{lem:KP} below to show that it melts in finite time). Then one may apply Theorem 1.1 at the melting point.
These will not be ancient solutions, but we conjecture that further refinements of our method to study behavior in the $t \rightarrow -\infty$ limit can be used to give alternative constructions of ancient solutions as well.

In the next two subsections, we first give a brief overview of the literature on the Stefan problem and the question of melting singularities, and then outline our approach to the proof of Theorem \ref{thm:main}.

\subsection{Existing literature}

The Stefan problem is one of the most-studied moving interface equations, both in connection to the physical model and as an archetype for certain classes of nonlinear parabolic PDE. In describing the literature it is useful to distinguish an additional variant
(not studied in the present paper), 
in which the temperature is allowed to change sign in the water phase and satisfies around these ``phantom interfaces"
the usual heat equation. The authors (see for example \cite{HV}) sometimes describe this 
problem (not satisfying our distributional formulation above)
as ``undercooled,"
not to be confused with the entirely different supercooled/ill-posed Stefan problem.

The existence, uniqueness, and comparison principle for solutions to the Stefan problem have been studied from the point of view of weak or distributional solutions by S. L. Kamenomostskaja in \cite{K}, by A. Friedman in \cite{F}, and by H. Brezis in \cite{B} in a monotone operator formulation. There is also a viscosity formulation of the problem, common in works on regularity theory, for which the global theory and connection to distributional notions is discussed by I. Kim and N. Pozar in \cite{KP11}. A key result for the Stefan problem is the continuity of the temperature $u$: even through a singularity, $u$ remains continuous. In the two-phase case, this was shown by L. A. Caffarelli and L. C. Evans in
\cite{CE} , by E. DiBenedetto in \cite{dB} and by W.P. Ziemer in \cite{Z} in the early 1980s.

In one dimension, the behavior at melting singularities was studied extensively in early references, and by D. G. Aronson and S. Kamin (\cite{AK}) was largely understood: here $g(t) \approx t$, and this can be shown with purely PDE methods. This is particularly clear for the one-phase problem, where, unlike in higher dimensions, the ice disconnects two independently-evolving, smooth, water phases that come into contact transversely. The two-phase behavior is essentially the same despite the small interaction. 

In higher dimensions, as mentioned above, \cite{HV} constructs radial melting solutions with specific profiles for $g$. In fact, the authors construct a countable family of solutions in every dimension with different rates, but to the ``undercooled" variant of the problem,
so strictly speaking only one of their solutions (for each dimension $n$) is a solution of the classical Stefan problem.
The melting rate there corresponds to the number of negative-temperature rings in the water phase, with exactly one of their solutions having no such rings and producing the asymptotics we described above. Several subsequent works, including \cite{HR} by M. Hadzic and P. Raphael and \cite{Z2} by C. Zhang, made refinements and generalizations to the construction, such as checking for stability with respect to radial perturbations.

To the best of our knowledge, there has been no serious study of melting rates for general radial solutions to the two-phase problem at all. While not explicitly stated, it is possible to use the barriers in \cite{KP11} to obtain that $g(t)/\sqrt{t} \rightarrow 0$ (even without the radial assumption). We have also found elementary PDE arguments to get lower bounds which are far from sharp and do not seem to appear in the literature (see Section \ref{sec:pde}).

For the one-phase problem, an entirely different perspective is available via performing the Duvaut transform, i.e. considering the primitive $U(t, x) = \int_{t_0}^t u(s, x) ds$, which solves a parabolic obstacle problem. There has been substantial progress in this direction recently in a series of strong results, using monotonicity formulas and higher-order blow-up arguments in the parabolic scaling; notably A. Figalli, X. Ros-Oton and J. Serra prove in \cite{FRS} a partial regularity theorem that was previously inaccessible. We note that specializing this to the radial case, though, does not actually give much new information about a melting singularity. More relevant to the present work is the important result \cite{FRTL} by G. Fioravanti, X. Ros-Oton, and C. Torres-Latorre, which studies the behavior at melting singularities for general solutions to the one-phase problem via the Duvaut transform and then further PDE arguments. The estimates they obtain are weaker than ours, especially for $n \geq 3$, but apply to nonradial solutions.

\subsection{Ideas from the proof}

Our proof uses the {\em Potential Reduction to ODE method} introduced in \cite{li_lin_weiss_xie} for the Neumann Bernoulli problem ---see also \cite[Section 4]{EFW} on the stationary obstacle problem for a precursor of the method. We work directly with the two-phase Stefan problem, in the physical variables. We begin by rewriting the PDE as $u_t - \Delta u = \mu$, where $\mu$ is a distribution supported on the interface. 
Note that here we divided the quasilinear operator into one which allows for a representation formula
and another, which is {\em not} a perturbation. 
We can then express $u$ as the convolution of $\mu$ with a heat kernel; after a computation this takes the form
\[
	u(-t, x) = - \int_{t}^1 \int_{|y| = g(s)} G(s - t, y - x) g'(s) dS(y) ds + \text{caloric function},
\]
where $G$ is the forward heat kernel, extended by zero for non-positive time. By itself this is not useful, as it only relates the temperature and the interface position in a nonlinear, nonlocal way. We do know, however, that along $\{|x| = g(t)\}$ we have $u(-t, x) = 0$, which ``closes'' the relation. There is still a remaining caloric function, which can be viewed as representing the effects of the initial and boundary data: to make this useful we will subtract two values $u(-t, x) - u(0, 0)$, getting
\[
	\int_{t}^1 \int_{|y| = g(s)} G(s - t, y - x) g'(s) dS(y) ds - \int_{0}^1 \int_{|y| = g(s)} G(s, y) g'(s) dS(y) ds = o_t(1).
\]

We now study these integrals. It will help to assume $g' \geq 0$, which we show is the case for small times in Section \ref{sec:pde}, so the integrands themselves do not change sign. The left integral has a pole at $(s, y) = (t, x)$, and so is much larger there than the other one. In fact we expect that to leading order, the contribution near the pole is the dominant part of the left integral and evaluates to something of order $g^2(t)/t$ after a computation. The right integral, on the other hand, has a pole right at $(0, 0)$, and here the leading-order term is
\[
	A(t) := \int_0^t g^{n-1}(s) g'(s) s^{-n/2} ds,
\]
as the exponential in the heat kernel turns out of order $1$ here. We are left with a perturbed integro-differential equation:
\[
	c(n)\frac{g^2(t)}{t} = (1 + o_t(1)) \> \int_0^t g^{n-1}(s) g'(s) s^{-n/2} ds, \quad n\geq 3,
\]
which can be ``integrated'' to give the rates in Theorem \ref{thm:main}.

Formally, neglecting the asymptotic remainder and differentiating
twice gives the second-order ODE
\[
\begin{aligned}
&\left(2C_n g-t^{1-n/2}g^{n-1}\right)g''
 +\left(2C_n-(n-1)t^{1-n/2}g^{n-2}\right)(g')^2 \\
&\qquad
 +\left(\frac n2\,t^{-n/2}g^{n-1}
        -\frac{4C_n g}{t}\right)g'
 +\frac{2C_n g^2}{t^2}=0,
\end{aligned}
\]
where
\[
    C_n=\frac{(4\pi)^{n/2}}{2(n-2)\w_{n-1}}
       =\frac{n-2}{4}\,c_n^{n-2},
    \qquad n\geq3.
\]

This is, of course, only a simplified sketch of the argument, as there are other contributions to each integral which need to be estimated, the $o_t(1)$ needs to be made more quantitative even in the preliminary differential inequality, the behavior is somewhat different when $n = 2$, and integrating the final relation takes some care. 

However, we believe this is a robust approach to singularity analysis for the two-phase Stefan problem which can extend beyond the radial setting, as well as to other nonlinear PDE. In particular, no use is made of monotonicity formulas or blow-ups.

The organization of the rest of the paper is as follows: in Section \ref{sec:prelim}, we explain our notation and assumptions more carefully. Section \ref{sec:pde} contains a number of PDE estimates on general solutions to gain some rough control over $g$ and $g'$ before entering the main proof. Some of these estimates appear to be new, but follow from fairly standard barrier and Bernstein arguments. In Section \ref{sec:rep}, we justify the potential reformulation above, and then Section \ref{sec:potential} performs the main estimates leading to the perturbed integro-differential equation for $g$. Finally, Section \ref{sec:ode} completes the proof of Theorem \ref{thm:main}. The appendix, Section \ref{sec:appendix}, contains for completeness an argument for the smoothness of the interface away from the melting point, as well as a straightforward PDE estimate for parabolic equations.

\section{Preliminaries} \label{sec:prelim}

We will assume that the space dimension $n \geq 2$. We generally work with functions defined on $Q_R = [-1, 0] \times \bar{B}_R \ss \R \times \R^n$, using the variables $(t, x) \in \R \times \R^n$ for time and space. A \emph{solution} to the Stefan problem on $Q_R$ is a continuous function $u : Q_R \rightarrow \R$ solving
\[
	\theta(u)_t - \Delta u = 0
\]
in the sense of distributions, where $\theta$ is given by
\[
	\theta(s) = \begin{cases}
		s + 1, & s > 0 \\
		s, & s \leq 0.
	\end{cases}
\]
A \emph{classical} solution on a domain is one which has $\partial \{u > 0\}$ a smooth hypersurface separating $\{u > 0\}$ and $\{u \leq 0\}$.

We will deal almost exclusively with \emph{radial} solutions, for which it will be convenient to identify $u(t, x) = u(t, |x|)$, with the latter a function $[-1, 0] \times [0, R] \rightarrow \R$. We define
\[
	g(t) = \inf \{r : u(-t, r) > 0\} \in [0, +\infty].
\]
We will operate under the following assumptions for the main results in this paper. Later, we will discuss verifying these assumptions for general solutions.
\begin{enumerate}
	\item $u \in C(Q_R)$.
	\item $0 < g < R/2$ for $t \in (0, 1]$ and $g : [0, 1] \rightarrow [0, R/2]$ is a continuous curve.
	\item $g(0) = 0$.
	\item $u > 0$ on $\{r > g(-t)\}$, and either $u \equiv 0$ or $u < 0$ on $\{r < g(-t)\}$.
\end{enumerate}
It is well-known that under assumptions (1-4), the solution is classical on $(-1, 0) \times B_R$ (and in particular $g \in C^\infty$ on $(0, 1)$) (see Subsection \ref{sec:appendix} in the Appendix, \cite{ACS} and \cite{koch}).

At times it will be convenient to abuse notation in writing $x = g(t)$ for a distinguished point $x = g(t)e_n$, with $e_n$ a fixed unit vector.

We note here that the property of solving the Stefan problem is invariant under the $0$-ho\-mo\-ge\-neous parabolic rescaling
\[
	u_s(t, x) = u(s^2 t, s x).
\]
This means that if the assumptions are verified in a neighborhood of $(0, 0)$, after a rescaling they will be verified on $Q_R$ for some large $R$. In particular, we are not making any quantitative assumptions on the boundary or initial conditions at this point.

We will use the notation $G(t, x)$ for the value of the forward heat kernel on $(0,\infty)\times \R^n$ with pole at $(0, 0)$:
\[
	G(t, x) = (4\pi)^{-n/2} t^{-n/2} e^{-\frac{|x|^2}{4t}}.
\]

\section{PDE estimates on the temperature and interface}\label{sec:pde}

This section proves several estimates for $g$ using purely PDE-based arguments (i.e. not using the potential representation formula used in Section \ref{sec:rep}).

\subsection{Monotonicity of the interface} 

The goal of this section is to prove that $g' \geq 0$ follows from (1-4). We start by showing that
\[
	\frac{g^2(t)}{t} \rightarrow 0
\]
without making assumptions beyond (1-4). This is a classic result in the one-phase case (see \cite{C78}, or \cite{FRS} for more recent discussion), connected to the fact that blow-ups in the parabolic scaling have time-independent free boundaries. The earliest reference we could find in the two-phase case is \cite[Lemma 4.3]{KP11}. The proof below is essentially taken from there, with minor adaptations to our context.

\begin{lemma}[Adapted from Lemma 4.3 in \cite{KP11} (Kim-Po\v z\'ar)]\label{lem:KP}
	For each $\eta > 0$, there is a $C_\eta$ (depending only on $\eta$ and $n$) such that for $t < s$,
	\[
		g^2(s) \leq (1 + \eta) g^2(t) + C_\eta (s - t) \max_{[-s, 0] \times \overline{B_{2g(s)}}} u.
	\]
	In particular,
	\[
		\lim_{t\searrow 0} \frac{g(t)}{\sqrt{t}} = 0.
	\]
\end{lemma}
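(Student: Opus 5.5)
The plan is to prove the inequality by a comparison (barrier) argument on the time interval $[-s,-t]$, applied in the forward time direction. We then deduce the limit by letting $t\searrow 0$. Fix $t<s$ and set $M=\max_{[-s,0]\times\overline{B_{2g(s)}}} u$. The goal is to construct a radial supersolution $w$ of the Stefan problem on $[-s,-t]\times B_{2g(s)}$ with $w\ge u$ on the parabolic boundary. Its ice region should be a ball $\{|x|\le\rho(\tau)\}$ with
\[
\rho(-s)^2=\frac{g(s)^2}{1+\eta},\qquad (\rho^2)'=-C_\eta M .
\]
The comparison principle for distributional Stefan solutions (monotonicity of $\theta$, as in \cite{KP11}) then gives $\{w\le0\}\subseteq\{u\le 0\}$. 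Hence $g(t)\ge\rho(-t)$, i.e. $g^2(s)\le(1+\eta)g^2(t)+(1+\eta)C_\eta(s-t)M$, which is the claim after renaming $C_\eta$.

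\textbf{The barrier.} For $n\ge3$ take
\[
w(\tau,x)=B(\tau)\bigl(\rho(\tau)^{2-n}-|x|^{2-n}\bigr)_+ ,
\]
and for $n=2$ use $B(\tau)\log_+(|x|/\rho(\tau))$. Choose $B$ so that $w\ge M$ on $\{|x|\ge g(s)\}$; with the gap $\rho<g(s)/\sqrt{1+\eta}$, this holds with $B\approx c_\eta M\rho^{n-2}$. We then check the following.
\begin{enumerate}
\item \emph{Heat equation in the water phase.} $w$ is harmonic in $x$, so $w_t-\Delta w=w_t$. Since $\rho$ is decreasing, $\rho^{2-n}$ is increasing in $\tau$. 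With $B$ taken as a fixed constant (or increasing), this gives $w_t\ge0$.
\item \emph{Free boundary condition.} For a supersolution we need the melting speed to dominate the gradient, i.e. $-\rho'\ge|\nabla w_+|=(n-2)B\rho^{1-n}\approx c_\eta M/\rho$. This is exactly $(\rho^2)'=-C_\eta M$. If $B$ is frozen at its initial value, one checks that the inequality only improves as $\rho$ decreases, or one adjusts constants.
\item \emph{Initial data at $\tau=-s$.} On $|x|<g(s)$ we have $u\le0\le w$. On $g(s)\le|x|\le 2g(s)$ we have $u\le M\le w$.
\item \emph{Lateral boundary $|x|=2g(s)$.} Here $w\ge M\ge u$ for all $\tau$, since $w$ is increasing in $|x|$ and in $\tau$.
\end{enumerate}
Inside the ice, $w=0\ge u$ is compatible with both the one-phase case and the two-phase case ($u<0$). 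The case $n=2$ is identical with the logarithm.

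\textbf{The limit.} Fix $s$ and let $t\searrow0$, using $g(0)=0$ from (3). This gives
\[
\frac{g^2(s)}{s}\le C_\eta\max_{[-s,0]\times\overline{B_{2g(s)}}}u .
\]
As $s\to0$ the right side tends to $C_\eta u(0,0)=0$, by continuity (1), assumption (2)--(3) and the fact that $g(s)\to0$. Hence $g(s)/\sqrt s\to0$.

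\textbf{Main obstacle.} The delicate step is making the comparison rigorous. One must verify that $w$ is a genuine distributional or viscosity supersolution across its free boundary, where the sign convention $V\ge|\nabla w_+|-|\nabla w_-|$ is needed with $w_-=0$. One must also check that the comparison principle applies on a cylinder where $u$ is merely continuous at the boundary. I would cite the comparison theory of \cite{KP11}, and strictly separate the data, for instance by replacing $M$ with $M+\epsilon$, to obtain strict ordering before invoking it.
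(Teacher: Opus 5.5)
Your overall strategy is the paper's: build a radial supersolution whose ice region is a ball shrinking according to $(\rho^2)'=-C_\eta M$, started slightly inside $\{|x|<g(s)\}$. You compare it with $u$ on $[-s,-t]\times B_{2g(s)}$ via the weak comparison principle of \cite{KP11}, and then let $t\searrow0$.

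\textbf{The gap is in step 2 for $n\ge3$.} The claim that with $B=B_0$ frozen ``the inequality only improves as $\rho$ decreases'' is false.
\begin{itemize}
\item At the interface, $|\nabla w_+|=(n-2)B_0\rho^{1-n}$, while $-\rho'=C_\eta M/(2\rho)$.
\item The supersolution condition $-\rho'\ge|\nabla w_+|$ is therefore $\rho^{n-2}\ge 2(n-2)B_0/(C_\eta M)$.
\item This gets \emph{worse} as $\rho$ decreases. It fails before the ball collapses, whatever $C_\eta$ is.
\end{itemize}
For $n=2$ there is no problem, since there $B$ does not depend on $\rho$. Note also that your step 1 justifies $w_t\ge0$ only for $B$ constant or increasing. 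Step 2, however, uses $B\approx c_\eta M\rho^{n-2}$ with the \emph{current} $\rho$, which is decreasing. So as written, the two steps rely on incompatible choices of $B$.

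\textbf{The fix.} Make the barrier depend only on $|x|/\rho$, as the paper does with $m(s)h((1+\delta)|x|/T(-t))$. Take $B(\tau)=c_\eta M\rho(\tau)^{n-2}$, that is,
\[
w=c_\eta M\bigl(1-(\rho/|x|)^{n-2}\bigr)_+,\qquad c_\eta=\bigl(1-(1+\eta)^{-(n-2)/2}\bigr)^{-1}.
\]
Then:
\begin{itemize}
\item $w_t=(n-2)c_\eta M\rho^{n-3}(-\rho')|x|^{2-n}\ge0$ directly.
\item $w\ge M$ on $\{|x|\ge g(s)\}$ for all $\tau$, because $\rho\le\rho(-s)$. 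This covers both the initial and lateral data.
\item The interface condition becomes $C_\eta\ge 2(n-2)c_\eta$, uniformly in $\rho$.
\end{itemize}
Alternatively, you can keep $B_0$ frozen but use the barrier only while $\rho\ge\rho(-s)/2$, treating larger $s-t$ as a trivial case.

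\textbf{The collapse case.} You also never handle $t$ with $s-t\ge\rho(-s)^2/(C_\eta M)$. For such $t$ the barrier's ice has vanished before time $-t$, so $\rho(-t)$ is undefined. The inequality is then trivial, since $(1+\eta)C_\eta M(s-t)\ge(1+\eta)\rho(-s)^2=g^2(s)$. The paper treats this explicitly as the case $s-a\ge L_s$.

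With these repairs your argument is complete and essentially the paper's. The paper's cap at $m(s)$ outside $|x|=T$ is only a convenience; your uncapped harmonic profile works equally well on $B_{2g(s)}$.
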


\begin{proof}
	Let
	\[
		m(t) = \sup_{[-t, 0] \times B_{2 g(t)}} u > 0;
	\]
	as $u$ is continuous we have that $m(t) \rightarrow 0$ as $t \rightarrow 0$. Set
	\[
		h(r) = \begin{cases}
			0, & r \leq 1, \\
			1, & r \geq 1 + \d, \\
			\frac{r^{2 - n} - 1}{(1 + \d)^{2-n} - 1}, & 1 < r < 1 + \d \text{ and } n \geq 3, \\
			\frac{\log r}{\log (1 + \d)}, & 1 < r < 1 + \d \text{ and } n = 2, \\
		\end{cases}
	\]
	the point being that $h(|x|)$ is harmonic on $B_{1 + \d} \sm B_1$ (and superharmonic on $B_{1}^C$). The constant $\d = \d(\eta) > 0$ will be chosen below. Then for a fixed $s$ and $t < s$, define $T^2(a) := \frac{1}{(1 + \d)^2}g^2(s) - 2 (1 + \d)^2 h'(1) (s - a) m(s)$ and $w(t, x) := m(s) h((1 + \d)|x|/T(-t))$; here and in what follows $h'(1)$ means $h'(1+)$. 
	
	The function $T(-\tau)$ is positive whenever
\[
    -s\leq \tau\leq 0,
    \qquad \tau<-s+L_s,
    \text{ where }
    L_s:=\frac{g^2(s)}
    {2(1+\delta)^4m(s)h'(1)}.
\]
	For future reference, $T'(a) = (1 + \d)^2 T^{-1} h'(1) m(s) > 0$.
	
	We claim that $w$ is a weak supersolution to the Stefan problem (see \cite{KP11}). The function $w$ is piecewise smooth with derivative jumps across the curves $|x| = T(-t)$, $|x| = \frac{1}{1 + \d}T(-t)$, so it suffices to check this in the classical sense. If $\frac{1}{1 + \d} T(-t) < |x|$, we have that $\Delta w \leq 0$ (as a distribution) and $w_t = (1 + \d)|x| m(s) h'((1+\d)|x|/T(-t)) T^{-2}(-t)T'(-t) \geq 0$, so $w_t - \Delta w \geq 0$. Along $|x| = \frac{1}{1+\d}T(-t)$, we instead need to check the Stefan condition $w_t \geq |\nabla w|^2$ (from the $|x| > \frac{1}{1+\d}T(-t)$ side). There we have $|\nabla w|^2 = w_t = (1 + \d)^2 m^2(s) (h'(1))^2 T^{-2}(-t)$ by construction.

	Along $t = -s$, we have that $T(-t) = \frac{1}{1 + \d} g(-t)$, so for $|x| < T(-t)$ we know that $u(t, x) \leq 0 \leq w(t, x)$. On the complement, we instead have $w(t, x) = m(s) \geq u(t, x)$. Along $\partial B_{2 g(s)}$, we have that $w(t, x) = m(s)$ at $t = -s$ as we have already checked, and $w_t \geq 0$ (this holds globally), so $w(t, x) = m(s) \geq u(t, x)$. We conclude from the weak comparison principle in \cite[Proof of Lemma 4.1]{KP11} that 
	for every $a\in(0,s)$ satisfying $s-a<L_s$,
\[
    u\leq w
    \quad\text{on }[-s,-a]\times B_{2g(s)}.
\]
	Fix $a\in(0,s)$. We claim that
\[
    g^2(s)
    \leq (1+\delta)^4g^2(a)
       +2(1+\delta)^4h'(1)(s-a)m(s).
\]

If $s-a\geq L_s$, then
\[
    2(1+\delta)^4h'(1)(s-a)m(s)\geq g^2(s),
\]
so the claim follows directly. This includes the collapse
endpoint $s-a=L_s$.

Suppose next that $s-a<L_s$. If $g(a)\geq 2g(s)$,
then the claim is immediate from
\[
    g^2(s)\leq \frac14g^2(a)\leq(1+\delta)^4g^2(a).
\]
It therefore remains to consider $g(a)<2g(s)$.

Choose $0<\kappa<2g(s)-g(a)$ and a point $x$ with
$|x|=g(a)+\kappa$. Then $x\in B_{2g(s)}$ and
$u(-a,x)>0$. By comparison, $w(-a,x)>0$, and hence
\[
    1<(1+\delta)\frac{g(a)+\kappa}{T(a)}.
\]
Letting $\kappa\searrow0$ gives
\[
    T(a)\leq(1+\delta)g(a).
\]
Squaring and substituting the definition of $T^2(a)$ yields
\[
    g^2(s)
    \leq(1+\delta)^4g^2(a)
       +2(1+\delta)^4h'(1)(s-a)m(s),
\]
as claimed.

Choose $\delta=(1+\eta)^{1/4}-1$ and set
\[
    C_\eta:=2(1+\delta)^4h'(1).
\]
Then $C_\eta$ depends only on $n$ and $\eta$, and the
claimed estimate follows.

Letting $a\searrow0$ and using $g(0)=0$, we obtain
\[
    \frac{g^2(s)}{s}\leq C_\eta m(s).
\]
Since $g(s)\to0$ and $u$ is continuous at $(0,0)$
with $u(0,0)=0$, we have $m(s)\to0$. Therefore
\[
    \lim_{s\searrow0}\frac{g(s)}{\sqrt{s}}=0.
\]
\end{proof}

\begin{remark}
	Notice that this proof does not really use the radial structure of our setting; it works for any viscosity solution to the Stefan problem with a melting singularity and compact negative phase. Integrating the estimate in Lemma \ref{lem:g2primesmall} gives an alternative proof of this slower-than-parabolic melting, but uses the extra structure more heavily. We retain the previous argument to emphasize that this result is not original to us and can be expected in essentially all situations.
\end{remark}

Now we are in a position to show that $g' \geq 0$. The idea is elementary: in the one-phase problem with $u\geq 0$, this is automatic from the Stefan condition, which reads $g'(t) = |\nabla u_+| \geq 0$. In the two-phase case, as the negative phase is constricted to a very small set relative to the parabolic scaling, $u_-$ (and its derivatives) will decay rapidly to $0$ as $t\rightarrow 0$. The other phase, $\{u > 0\}$, is large, and here we expect that $|\nabla u_+| > c > 0$ (and in fact blows up) at the free boundary as $t \rightarrow 0$, making the same argument work using the Stefan relation $g'(t) = |\nabla u_+| - |\nabla u_-|$. To make this rigorous, we need to be able to use a Hopf-type estimate on the positive phase to control the derivative of $u_+$ from below. Lemma \ref{lem:KP} is not quite enough for this, so we begin with a further local estimate.

\begin{lemma}\label{lem:curvature}
	Let $v$ be a classical solution of the Stefan problem on a cylinder $Q_* = [-R^2, 0] \times B_R$. Then
	\[
		\sup_{[-R^2/4, 0]\times B_{R/2}} |\nabla v| \leq C\left(\frac{1}{R} + \sup_{\partial\{v > 0\} \cap Q_*}|H|\right) \sup_{Q_*} |v|,
	\]
	where the constant $C$ depends only on $n$, and $H$ the mean curvature of the interface $\partial \{v > 0\}$, oriented so that it is positive for convex $\{ v<0 \}$ (indeed, for our radial solution $H = \frac{n - 1}{g(-t)}$).
\end{lemma}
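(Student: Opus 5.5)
We will prove the estimate by scaling and a case split according to how far a point is from the interface. By the invariance of the Stefan problem under $v_R(t,x) = v(R^2 t, R x)$, the gradient scales like $R^{-1}$ and the curvature like $R$. So it suffices to treat $R = 1$ and show
\[
\sup_{[-1/4,0]\times B_{1/2}} |\nabla v| \leq C(1 + K) M,
\]
where $M = \sup_{Q_*}|v|$ and $K = \sup |H|$. By replacing $K$ with $\max(K,1)$ we may also assume $K \geq 1$. Set $\rho = c_0/K$ for a small dimensional constant $c_0$.

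Fix a point $(t_0,x_0)$ in $[-1/4,0]\times B_{1/2}$ and let $d$ be its spatial distance to the interface $\Gamma_{t_0} = \partial\{v(t_0,\cdot)>0\}$. The argument splits into two cases.

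\emph{Case $d \geq \rho/4$.} Here $v$ solves the heat equation in a parabolic cylinder of radius comparable to $\min(\rho, 1/4)$. One technical point: the interface moves, so we need the parabolic neighborhood $[t_0 - r^2, t_0]\times B_r(x_0)$ to stay in one phase. We will choose $r \leq \rho/8$ and argue by continuity in time, or simply apply the interior estimate at every point of the relevant region, which is the same argument. Interior gradient estimates for caloric functions then give $|\nabla v(t_0,x_0)| \leq C M/r \leq C(1+K)M$.

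\emph{Case $d < \rho/4$.} Here we use the boundary estimate. Since $v = 0$ on $\Gamma$, each phase is a caloric function vanishing on a lateral boundary whose spatial curvature is bounded by $K$. We will first use the curvature bound to get interior and exterior tangent balls of radius $\rho$ at each interface point, for every time slice. This needs the principal curvatures, not just the mean curvature. In the radial application this is automatic, since all principal curvatures equal $1/g$. For the general statement I would read $H$ as the full second fundamental form, or note that in the case of interest it coincides with it.

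Next we build a stationary barrier in an annulus around an exterior tangent ball $B_\rho(z)$. On the positive side take $\phi(x) = A\,(\rho^{2-n} - |x-z|^{2-n})$, which is harmonic and hence a supersolution of the heat equation (for $n = 2$, use $\log$). On the annulus $\rho<|x-z|<2\rho$ we compare $\phi$ with $v^+$, using the boundary values $v \leq M$ on $|x-z| = 2\rho$ and $v \leq 0 \leq \phi$ on the ball. This gives $v^+ \leq C M\,\mathrm{dist}/\rho$ near the interface. The same argument applies to $v^-$.

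The catch is that the interface moves in time, so a stationary barrier is not obviously admissible. This is the hardest step. We want to avoid any assumption on the interface speed, since the speed is itself controlled by the gradient and is not assumed bounded. I would try two routes. The first is to use the subsolution (free-boundary) structure: $v^+$ extended by zero is a subcaloric function, because $\theta(v)_t - \Delta v = 0$ and $v^+$ is the positive part of a function that is caloric where it is positive. This holds as a distributional inequality. Then compare $v^+$ with the caloric function in the annulus $B_{2\rho}(z)\setminus B_\rho(z)$ over a time interval of length $\rho^2$. On the inner boundary sphere $v^+ = 0$ only at the tangency time, so this approach requires tangent balls uniform in time. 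That uniformity needs speed control.

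If that fails, the second route is a Bernstein-type argument. Apply the maximum principle to $|\nabla v|^2\eta$ in each phase, with a cutoff $\eta$ adapted to scale $\rho$. On $\Gamma$ we have $\nabla v = \pm|\nabla v|\nu$, and differentiating $v = 0$ along the interface gives $\partial_\nu|\nabla v|^2 = -2H|\nabla v|^2 + 2\partial_t v \cdots$. Combining this with the Stefan condition $V = |\nabla v_+| - |\nabla v_-|$, the boundary term at a maximum has a sign up to a $K|\nabla v|^2$ error, which the cutoff absorbs. I expect the Bernstein route to be the one that closes. Together with the interior case it yields the stated bound after rescaling back.
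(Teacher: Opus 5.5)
\textbf{There is a genuine gap.} Everything hinges on the near-interface case, and neither of your routes finishes it.

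\textbf{The barrier route and the far case.} You correctly note that tangent-ball barriers need control of the interface speed, and that speed is $|\nabla v_+|-|\nabla v_-|$, the very quantity being estimated. The same objection breaks your case $d\geq\rho/4$. Spatial distance at the single time $t_0$ does not keep the backward cylinder $[t_0-r^2,t_0]\times B_r(x_0)$ inside one phase. Appealing to ``continuity in time'' does not supply a speed bound either.

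\textbf{The Bernstein route as you set it up.} With $\eta|\nabla v|^2$ it fails at exactly the decisive step. Take $\nu$ the outward normal of $\{v>0\}$ and $V$ the normal velocity. On $\Gamma$ one has $\partial_\nu|\nabla v_\pm|^2=2|\nabla v_\pm|^2(H-V)$. Since $|\nabla v|^2$ jumps across $\Gamma$, the supremum is approached from the side with the larger gradient. On that side the Stefan term $V=|\nabla v_+|-|\nabla v_-|$ does have a favorable sign. But the maximum condition then only yields $\bigl||\nabla v_+|-|\nabla v_-|\bigr|\leq |H|+|\nabla\eta|/\eta$. This bounds the \emph{difference} of the two gradients, never $|\nabla v_\pm|$ themselves. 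It also carries no factor $\sup|v|$, which is what Lemma \ref{lem:g2primesmall} needs. So the ``$K|\nabla v|^2$ error'' is not the obstruction, and the cutoff has nothing to absorb. In the interior, $\eta|\nabla v|^2$ alone gives an inequality homogeneous in $|\nabla v|^2$. The usual fix of adding $\Lambda v^2$ repairs the interior but does nothing at $\Gamma$, because $\partial_\nu(v^2)=0$ there.

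\textbf{The missing idea.} This is how the paper argues: use a multiplicative weight that increases in $|v|$. Take $F=(1+t)\eta^2\phi(v)|\nabla v|^2$ with $\phi(v)=(1-|v|/(2M))^{-1/2}$, on all of $Q_*$ and with no case split.
\begin{itemize}
\item In each phase, $\phi''/\phi-(2+\varepsilon)(\phi'/\phi)^2=(1-\varepsilon)\phi^4/(16M^2)>0$. This produces the coercive term $-(1+t)\eta^2\phi^5|\nabla v|^4/(32M^2)$, which gives $F\leq CM^2$ at interior maxima.
\item At $\Gamma$, $\phi=1$ from both sides, so the supremum is again taken from the side with the larger gradient. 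Now $\phi'(0\pm)=\pm1/(4M)$ adds a term $|\nabla v_\pm|/(4M)$, with the right sign, to the normal-derivative condition. Together with the favorably signed Stefan term, this gives $\eta|\nabla v_\pm|\leq 8(\eta|H|+|\nabla\eta|)M$.
\end{itemize}

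\textbf{Mean curvature suffices.} The curvature enters only through $\Delta v_\pm-\partial_{\nu\nu}v_\pm+H\,\partial_\nu v_\pm=0$ at the interface point, so only the mean curvature is used. Your reinterpretation of $H$ as the full second fundamental form, forced by the tangent balls, would prove a weaker statement than the one claimed.
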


\begin{proof}
	We begin by noting that this estimate is invariant under the $0$-homogeneous parabolic rescaling $v(R^2 t, R x)$, so we may assume that $R = 1$. Set $m := \sup_{Q_*} |v|$. If $m=0$, the estimate is trivial, so let from now on $m>0$. 

	The proof proceeds via a Bernstein estimate for 
	\[
		F = (1 + t) \eta^2(x) \phi(v) |\nabla v|^2.
	\]
	Here $\eta$ is a smooth cutoff function which is $1$ on $B_{1/2}$ and $0$ on $\partial B_1$, while
	\[
		\phi(v) = \left(1 - \frac{|v|}{2 m}\right)^{-\frac{1}{2}}.
	\]

	As $F$ vanishes on the parabolic boundary of $Q_*$, its supremum must either be attained at a point $(\bar{t}, \bar{x})$ in $Q_* \cap \{v \neq 0\}$, or along a sequence tending towards a point on $Q_* \cap \{v = 0\}$ (note that $F$ is not continuous across $\{v = 0\}$). We analyze the former case first.

	Here we can compute the heat operator applied to $F$, using $L = \partial_t - \Delta$ and starting by differentiating the factors:
	\[
		L |\nabla v|^2 = -2 |D^2 v|^2\leq 0 \qquad L \phi(v) = \phi'(v) Lv - \phi''(v) |\nabla v|^2 = - \phi''(v)|\nabla v|^2.
	\]
	We also have that at this point, $\nabla F = 0$, so
	\[
		\nabla |\nabla v|^2 = - |\nabla v|^2 \left(\frac{2\nabla \eta}{\eta} + \frac{\phi'(v)}{\phi(v)}\nabla v\right).
	\]
	Using this in some cross-terms, we get
	\begin{align*}
		0 &\leq LF \\
		&\leq \phi(v)|\nabla v|^2 \left(\eta^2 + (1 + t)\bigg[ -\Delta \eta^2 - \eta^2 \frac{\phi''(v)}{\phi(v)}|\nabla v|^2 \right. \\
		&\qquad \left. + 2 \left(\frac{2\nabla \eta}{\eta} + \frac{\phi'(v)}{\phi(v)} \nabla v\right)\cdot \left(\eta^2 \frac{\phi'(v)}{\phi(v)}\nabla v + 2 \eta \nabla \eta\right) - 4 \eta \nabla \eta \cdot \nabla v  \frac{\phi'(v)}{\phi(v)}     \bigg]  \right) \\
		& \leq \phi(v)|\nabla v|^2 \left(\eta^2 + |\Delta \eta^2| + (1 + t)|\nabla v|^2 \eta^2 \left[(2 + \e) \left(\frac{\phi'(v)}{\phi(v)}\right)^2 - \frac{\phi''(v)}{\phi(v)}\right]  +  C_\e |\nabla \eta|^2   \right),
	\end{align*}
	using $1 + t \leq 1$, and using Cauchy-Schwarz on the cross-terms involving $\phi'(v)/\phi(v)$. Now, by direct computation
	\[
		\phi' = \frac{\pm 1}{4m}\phi^3 \qquad \phi'' = \frac{3}{16 m^2} \phi^5,
	\]
	with $\pm$ depending on the sign of $v$. In either case,
	\[
		\frac{\phi''}{\phi} -  (2 + \e)\left(\frac{\phi'}{\phi}\right)^2 = \frac{1 - \e}{16m^2} \phi^4,
	\]
	so for sufficiently small $\e$ the corresponding term above is strictly negative. We arrive at
	\[
	0 \leq LF \leq \phi(v)|\nabla v|^2 \left[ C(\eta) - (1 + t)|\nabla v|^2 \eta^2 \phi^4(v)  \frac{1}{32m^2} \right],
	\]
	or
	\[
		F \leq C(\eta) m^2 \frac{1}{\phi^3(v)} \leq C(\eta) m^2.
	\]
	Restricting to $[-1/4, 0] \times B_{1/2}$, this immediately implies the conclusion of the lemma.

	We now turn to the case of $F$ being maximized along $\{v = 0\}$ instead. While $F$ is discontinuous along $\{v = 0\}$, it is piecewise smooth on each side, with a jump discontinuity, so the maximum will be attained at some point $(\bar{t}, \bar{x})$ from one of the sides. Let $e_n$ be the outward unit normal to $\{v > 0\}$ at this point, and express $\{v = 0\}$ as the graph $x_n = g(t, x')$ locally, with $\nabla_{x'} g(\bar{t}, \bar{x}') = 0$ and $g_t(\bar{t}, \bar{x}')$ being the normal velocity. It will be helpful to use the notation $f_\pm$ to denote the limit values from $\{v > 0\}$ (resp. $\{v < 0\}$) of piecewise continuous functions, and likewise for derivatives.

	We compute the normal derivatives of $F_{\pm}$ at $(\bar{t}, \bar{x})$. Differentiating the relation $v_\pm(t, x', g(t, x')) = 0$ once gives the standard relation $g_t (v_\pm)_n = - (v_\pm)_t$ for the normal velocity (everything is evaluated at $(\bar{t}, \bar{x})$). Differentiating twice in space gives that $\Delta v_\pm - (v_\pm)_{nn} + (v_\pm)_n H = 0$, where $H$ is the mean curvature (i.e. $ \Delta_{\bar x'}g(\bar{t}, \bar{x})$ with our sign convention, though this will not be important). As $L v_\pm = 0$, $ \Delta v_\pm = (v_\pm)_t = - g_t (v_\pm)_n$, or
	\[
		(v_\pm)_{nn} = (H - g_t) (v_\pm)_n.
	\]
	As $\partial_n |\nabla v_\pm|^2 = 2 (v_\pm)_n (v_\pm)_{nn}$ at this point,
	\[
		\partial_n |\nabla v_\pm|^2 = 2 (v_\pm)_n^2 (H - g_t) = 2 |\nabla v_\pm|^2 (H - g_t).
	\]
	The Stefan condition gives that $g_t = |\nabla v_+| - |\nabla v_-|$, so we can rewrite as
	\[
		\partial_n |\nabla v_\pm|^2 = - 2 |\nabla v_\pm|^2 (|(v_+)_n| - |(v_-)_n| - H).
	\]
	We also compute that
	\[
		\phi(v_\pm) = 1 \qquad \phi(v_\pm)_n = \phi'(v_\pm) (v_\pm)_n = \frac{\pm 1}{4m} (v_\pm)_n.
	\]
	Combining everything,
	\[
		(F_{\pm})_n = F_\pm\left( \frac{2 \eta_n}{\eta} + \frac{\pm 1}{4m} (v_\pm)_n - 2(|(v_+)_n| - |(v_-)_n| - H)\right).
	\]

	First consider the case that $|\nabla v_+ (\bar{t}, \bar{x})| \geq |\nabla v_- (\bar{t}, \bar{x})|$, in which case the maximum is attained from the $\{v > 0\}$ side and $(F_+)_n \geq 0$. We then have
	\[
		0 \leq \frac{2 \eta_n}{\eta} + \frac{ 1}{4m} (v_+)_n + 2 H,
	\]
	so
	\[
		\eta |\nabla v_+| = - \eta (v_+)_n \leq 8 (\eta |H| + |\nabla \eta|) m.
	\]
	Similarly, if the gradients are ordered the other way around, $(F_-)_n \leq 0$ and
	\[
		0 \geq \frac{2 \eta_n}{\eta} - \frac{ 1}{4m} (v_-)_n + 2 H,
	\]
	leading to
	\[
		\eta |\nabla v_-| = - \eta (v_-)_n \leq 8 (\eta |H| + |\nabla \eta|) m.
	\]
	Either way, we get
	\[
		\eta |\nabla v_\pm| \leq 8 (\eta |H| + |\nabla \eta|) m,
	\]
	and so
	\[
		\max F \leq F(\bar{t}, \bar{x}) \leq C(1 + \sup_{\partial\{v > 0\} \cap Q_*} |H|)^2 m^2.
	\]
	This again gives the conclusion of the lemma when restricted to $[-1/4, 0] \times B_{1/2}$.
\end{proof}

\begin{remark}
	Estimates like the above are common for the 1D Stefan problem, where there is no curvature contribution; see for example even the very early reference \cite{CDH} (which relies on a smallness assumption), or the more modern form in \cite{AK}. A global estimate for radial solutions in the same spirit can be found in \cite{LJ}; the approach taken above is a more standard local Bernstein estimate from quasilinear elliptic and parabolic theory.
\end{remark}

This lemma can be applied to our setting to give the following:

\begin{lemma}\label{lem:g2primesmall}
	\[
		\lim_{t \searrow 0} g(t) g'(t) = 0.
	\]
\end{lemma}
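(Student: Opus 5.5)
We apply Lemma \ref{lem:curvature} at the natural scale $R = c\,g(t)$ around the free boundary point, combined with the Stefan condition. Fix a small $t>0$ and the point $(-t, g(t)e_n)$. Consider the parabolic cylinder $Q_* = [-t-R^2, -t]\times B_R(g(t)e_n)$ with $R = \tfrac12 g(t)$; a translate of Lemma \ref{lem:curvature} applies since the solution is classical away from the melting point.

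First we control the curvature on $Q_*$. On $Q_*$ the interface is the sphere $|x| = g(s)$, $s\in[t, t+R^2]$, with $|H| = (n-1)/g(s)$, and we need $g(s)\gtrsim g(t)$ there. Lemma \ref{lem:KP}, applied with the pair $t<s$ reversed, says $g$ cannot drop much going backward. Precisely, for $\tau\in[t,s]$ we have
\[
g^2(t)\leq (1+\eta) g^2(\tau) + C_\eta(\tau - t)\max u .
\]
Since $\tau - t\le R^2 = g^2(t)/4$ and $\max u = o(1)$, this gives $g^2(\tau)\geq \tfrac12 g^2(t)$ for $t$ small. Hence $\sup|H|\le C/g(t)$ on $Q_*$, and the interface does meet $B_R$ (otherwise the estimate is even easier). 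Lemma \ref{lem:curvature} then yields
\[
|\nabla u_\pm(-t, g(t))| \le \frac{C}{g(t)}\,\sup_{Q_*}|u|.
\]
The sup is over the shrinking set $[-2t,0]\times B_{2g(t)}$ for small $t$, so by continuity of $u$ and $u(0,0)=0$ we get $\sup_{Q_*}|u| =: m(t)\to 0$. One technical point is that $\nabla u$ jumps across the interface. The lemma bounds both one-sided limits, since its proof handles the sup of $F$ approached from either side.

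Then the Stefan condition $g'(t) = |\nabla u_+| - |\nabla u_-|$ (in backward time, with the paper's orientation $g' \ge 0$ meaning melting as $t\searrow0$) gives
\[
g(t)|g'(t)| \le g(t)\big(|\nabla u_+|+|\nabla u_-|\big)\le 2C\, m(t)\to 0.
\]

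The main obstacle is the comparability $g(\tau)\gtrsim g(t)$ on the earlier time interval, which requires a lower bound on $g$ at earlier times, not later ones. If applying Lemma \ref{lem:KP} in that direction runs into trouble (its roles of $t<s$ are fixed), I would choose instead the cylinder in the time window $[-t-R^2,-t]$. There the lemma with $t$ as the earlier backward-time variable and $\tau = t+R^2$ as the later one directly gives $g^2(\tau)\le (1+\eta)g^2(t)+o(g^2(t))$, an upper bound. For the lower bound I would use continuity of $g$ on $[t,2t]$ together with a short bootstrap: shrink $R$ to $\epsilon g(t)$ so that $g(\tau)\geq g(t)/2$ on the window by continuity. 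The constant then depends on $\epsilon$ but not on $t$ once the smallness from Lemma \ref{lem:KP} is quantified. Making that uniformity rigorous is the step needing care.
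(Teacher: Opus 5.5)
Your overall route is the paper's: Lemma \ref{lem:curvature} on the cylinder $[-t-g^2(t)/4,-t]\times B_{g(t)/2}(g(t)e_n)$, a curvature bound $|H|\le C/g(t)$ there, then the Stefan condition and continuity of $u$ at $(0,0)$. The gap is in how you get the curvature bound.

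Lemma \ref{lem:KP} with $t<\tau$ gives only $g^2(\tau)\le(1+\eta)g^2(t)+C_\eta(\tau-t)\max u$. That is an upper bound on the radius at the physically earlier time $-\tau$: the ice cannot melt too fast. The inequality you write, $g^2(t)\le(1+\eta)g^2(\tau)+C_\eta(\tau-t)\max u$ for $\tau\ge t$, is the reverse statement. It bounds the freezing rate (a lower bound on $gg'$), and it does not follow from Lemma \ref{lem:KP}, whose order $t<s$ cannot be swapped. In the paper, a lower bound on $g'$ is only obtained later, in the theorem after Lemma \ref{lem:strictlymonotone}. That proof uses the present lemma, so invoking it here would be circular. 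Your fallback does not close the gap either. Continuity of $g$ alone gives no modulus at the scale $g^2(t)$ that is uniform as $t\searrow 0$. Such a uniform modulus (cf.\ \eqref{eq:locallip}) is a consequence of the lemma you are proving.

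The missing observation is that you need no comparability between $g(\tau)$ and $g(t)$. Lemma \ref{lem:curvature} only involves $|H|$ at interface points lying in the cylinder. Every $x\in B_{g(t)/2}(g(t)e_n)$ satisfies $|x|\ge g(t)-g(t)/2=g(t)/2$. So if the sphere $\{|x|=g(\tau)\}$ meets the cylinder at all, its radius is at least $g(t)/2$ and $|H|=(n-1)/g(\tau)\le 2(n-1)/g(t)$. If it does not meet the cylinder, it contributes nothing. This is exactly the paper's one-line argument (``on this cylinder, $|x|\geq g(t)/2$''). You were one remark away from it when you noted that the case where the interface misses $B_R$ is easier.

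With this replacement, the rest of your argument is correct and matches the paper. That includes the inclusion of $Q_*$ in $[-2t,0]\times B_{2g(t)}$ for small $t$ (which uses $g^2(t)/t\to 0$), the treatment of one-sided gradients, and the conclusion $g|g'|\le 2C\,m(t)\to 0$.
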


\begin{proof}
	Apply Lemma \ref{lem:curvature} to $u$ on the cylinder $[-t - g^2(t)/4, -t] \times B_{g(t)/2}(g(t))$. On this cylinder, $|x| \geq g(t)/2$, so $|H| \leq (n-1)\frac{2}{g(t)}$. We obtain that
	\[
		\sup_{[-t - g^2(t)/16, -t] \times B_{g(t)/4}(g(t))} |\nabla u| \leq C \frac{1}{g(t)} \max_{[-t - g^2(t)/4, -t] \times B_{g(t)/2}(g(t))} |u|.
	\]
	Using the Stefan condition
	\[
		g'(t) = |\nabla u_+(-t, g(t))| - |\nabla u_-(-t, g(t))|,
	\]
	we immediately get
	\[
		|g(t)g'(t)|\leq C \max_{[-t - g^2(t)/4, -t] \times B_{g(t)/2}(g(t))} |u| \rightarrow 0,
	\]
	from the continuity of $u$ and the fact that $g(t) \rightarrow 0$.
\end{proof}

With this, we are in a position to bound $|\nabla u_+|$ from below.

\begin{lemma}\label{lem:strictlymonotone}
	For $t$ small,
	\[
		|\nabla u_+(t, g(-t))| \geq c g^{-1/3}(-t) > 0.
	\]
	The constant $c$ depends on $u$ but is independent of $t$.
\end{lemma}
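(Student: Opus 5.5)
The plan is a Hopf-type lower barrier in the positive phase, built on an intermediate annulus whose outer radius is a power of $g$. The exponent $1/3$ should come from balancing two errors: the motion of the free boundary during the time window of the barrier, and the loss in the lower bound for $u$ at the outer radius.

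\emph{Step 1 (a lower bound away from the interface).} Fix $\rho \in (0, R/2)$. By (2)--(4), $u(0, x) > 0$ for $0 < |x| \le R$, and $u$ is continuous. Since $g(t) \to 0$, for $t$ small we have $u \ge c_0 > 0$ on $[-t_0, 0] \times \{|x| = \rho\}$. I then want to propagate a lower bound inward to a radius $r = r(t) \ll \rho$. On the region $\{g(-s) < |x| < \rho\}$, $u$ is a positive caloric function. I would compare it with the radial function $c_0\,\psi(|x|)$, where $\psi$ is the truncated fundamental solution $\psi(r) = (r_1^{2-n} - r^{2-n})/(r_1^{2-n} - \rho^{2-n})$ (logarithmic when $n = 2$). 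Here $r_1$ is any radius dominating $g$ on the relevant time interval, so that $\psi \le 0 \le u$ on $\{|x| \le r_1\}$. This $\psi$ is harmonic and time independent, so it is a subsolution wherever it is positive. It also lies below $u$ at the initial time of the window, once $u \ge c_0$ on a fixed compact annulus away from $\{|x| \le r_1\}$ at that time. The output is $u(-t, x) \gtrsim c_0\bigl(1 - (r_1/|x|)^{n-2}\bigr)$ for $r_1 < |x| < \rho$, and a logarithmic analogue when $n = 2$.

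\emph{Step 2 (localized barrier near $g(t)$).} Lemma \ref{lem:g2primesmall} gives $|(g^2)'| = o(1)$. Hence for $s \in [t, t + \tau]$ we have $g^2(s) \le g^2(t) + o(1)\tau$, and also $g^2(s) \ge g^2(t) - o(1)\tau$. I take a window $\tau = r^2$ with $r = g(t)^{\beta}$, $\beta < 1$, so that the inner boundary stays in $\{|x| \le R_*(s)\}$, where $R_*^2(s) := g^2(t) + \epsilon (s - t)$ and $\epsilon \to 0$. On $\{R_*(s) < |x| < r\}$ I would use a moving harmonic barrier
\[
	w(s, x) = a\, \frac{R_*(s)^{2-n} - |x|^{2-n}}{R_*(s)^{2-n} - r^{2-n}} \cdot \zeta(s),
\]
with a time cutoff $\zeta$ that vanishes at the start of the window. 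Here $a$ is the lower bound for $u$ on $|x| = r$ from Step 1. I then check two conditions. First, $w_t - \Delta w \le 0$: the time derivative coming from $R_*$ is of size $\epsilon\, a\, R_*^{-2}$, which must be absorbed by $\zeta'$ and by choosing $\zeta$ to grow on time scale $\sim r^2$. Second, $w \le u$ on the parabolic boundary. Comparison at $(-t, g(t))$, where both vanish, gives $|\nabla u_+| \ge |\nabla w| \gtrsim a/g(t)$ up to the cutoff losses.

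\emph{Step 3 (balancing).} Optimizing $\beta$ against the losses produces the power: the outer lower bound $a$ degrades as $r \to 0$, and the boundary motion $\epsilon r^2$ must stay $\lesssim g^2$. I expect a choice like $r \sim g^{1/3}$, which yields $|\nabla u_+| \ge c\, g^{-1/3}$; the constant $c$ depends on $u$ through $c_0$ and $\rho$. The main obstacle is exactly this step. Step 1 only gives a lower bound on $u$ at radius $r$ that is uniform if $r_1$ is uniformly smaller than $r$ over the window, while Step 2 needs the window long enough ($\tau \sim r^2$) for the barrier to be a subsolution, yet short enough that $g(s)$ does not outrun $R_*$. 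If the naive choice fails, I would first replace the time-independent comparison in Step 1 by the Lemma \ref{lem:KP} upper bound on $g$, which holds uniformly on the whole interval. That gives $r_1 \sim g(t)^{\beta'}$ and lets me track the exponents explicitly.
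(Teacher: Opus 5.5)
Your overall strategy, a Hopf-type subsolution in the positive phase that vanishes on the interface, is the right one. As designed, however, both steps fail, and the exponent cannot come out of Step 3.

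\textbf{Step 1.} Continuity of $u$ together with $u(0,0)=0$ forces $u(-t,x)\to0$ as $(t,x)\to(0,0)$. So a bound $u(-t,\cdot)\geq c_0\psi$ with $c_0$ fixed and $r_1\to0$ is impossible: at $|x|=\sqrt{r_1\rho}$ one has $\psi\geq 1/2$ in every dimension. Concretely, there are two cases.
\begin{itemize}
\item If your comparison window starts at a fixed time, then $r_1\geq\max g$ over the window is a fixed radius, and you learn nothing at $r=g(t)^\beta\to0$.
\item If the window is short, so that $r_1$ is small, the initial comparison must hold on all of $(r_1,\rho)$. A compact annulus is not enough, since $\psi>0$ right up to $r_1$. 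This comparison fails for the reason just given.
\end{itemize}
Your fallback through Lemma \ref{lem:KP} does not help, because the obstruction is the non-decaying harmonic profile, not the choice of $r_1$. Any correct lower bound at radius $r$ has to decay as $r\to0$, and nothing in your scheme produces such a decay rate.

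\textbf{Step 3.} There is nothing to balance. Lemma \ref{lem:g2primesmall} gives $|gg'|=o(1)$ with no rate, so a condition like $\epsilon r^2\lesssim g^2$ cannot select a power $\beta$.

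\textbf{Step 2.} This step fails for a sign reason. Your $w$ is harmonic in $x$, so $w_t-\Delta w=w_t$. As physical time increases, the inner radius $R_*$ shrinks, which increases the profile pointwise. A cutoff with $\zeta=0$ at the start of the window must have $\zeta'>0$ somewhere. Both contributions to $w_t$ are therefore nonnegative there, so $w$ is a supersolution, and nothing can be absorbed by $\zeta'$.

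\textbf{The missing idea.} You need a barrier that is \emph{strictly} subharmonic, with a margin of the same homogeneity as the error produced by the moving interface. Then the sign-indefinite and merely small quantity $gg'$ is dominated. The barrier should also tend to $0$ at the melting point. The paper uses one global barrier on $\{g(-t)<|x|<R\}$, started at a single fixed time $T_0$:
\[
w(t,x)=\delta\,\frac{|x|^2-g^2(-t)}{\bigl(|x|^2+g^2(-t)\bigr)^{2/3}},\qquad \Delta w\geq\frac{2\delta}{9}\bigl(|x|^2+g^2\bigr)^{-2/3},\qquad |w_t|\leq\frac{10\delta}{3}\,|gg'|\,\bigl(|x|^2+g^2\bigr)^{-2/3}.
\]
So $w$ is a subsolution once $|gg'|<1/15$. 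The boundary comparisons are then straightforward:
\begin{itemize}
\item at time $T_0$, the Hopf lemma is used once, with constants depending on $u$;
\item on $|x|=R$, one has $w\leq\delta R^{2/3}$, which is small;
\item on the interface, $w_r=2^{1/3}\delta\,g^{-1/3}$, which gives the lemma.
\end{itemize}
The exponent is $1-2\alpha$ with $\alpha=2/3$, chosen so that the algebraic subharmonicity holds for every $n\geq2$. It is not the outcome of balancing error terms. The sharper rate $1/g$ suggested by harmonic profiles is in fact false, which is why the strict subharmonicity has to be paid for in the exponent.
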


\begin{proof}
	We will use the comparison principle with a subsolution to the heat equation on $\{|x| > g(-t)\}$. Set
	\[
		w(t, x) = \d \frac{|x|^2 - g^2(-t)}{(|x|^2 + g^2(-t))^\a}
	\]
	with $\d$ small to be chosen shortly, and $\a \in (0, 1)$. Then $w = 0$ exactly on the interface $\{|x| = g(-t)\}$, and is positive on the rest of $\{|x| > g(-t)\}$. We differentiate $w$ to check the subsolution property (viewing it as a function of two variables $w(t, r)$ in polar coordinates):
	\begin{align*}
		w_t(t, r) &= \frac{2 \d g(-t) g'(-t)}{(r^2 + g^2(-t))^{\a + 1}}[(1 + \a) r^2 + (1 - \a) g^2(-t) ] \\
		w_r(t, r) &= \frac{2 \d r}{(r^2 + g^2(-t))^{\a + 1}}[(1 - \a) r^2 + (1 + \a) g^2(-t) ] \\
		w_{rr}(t, r)&= \frac{2 \d }{(r^2 + g^2(-t))^{\a + 2}}[(1-\a)(1 - 2 \a)r^4 + 2(1-3\a-\a^2) r^2 g^2(-t) + (1 + \a) g^4(-t)].
	\end{align*}
	Computing the Laplacian, setting $\a = 2/3$, and using $n \geq 2$,
	\begin{align*}
		w_{rr} + \frac{n-1}{r} w_r &= \frac{2 \d}{(r^2 + g^2(-t))^{\a + 2}}[(1 + \a) n g^4(-t) +(n-2\a)(1 - \a)  r^4 + 2(n - 3\a - \a^2) r^2 g^2(-t) ] \\
		&\geq \frac{2\d}{(r^2 + g^2(-t))^{\a + 2}}[10/3 g^4(-t) + 2/9 r^4 - 8/9 r^2 g^2(-t) ] \\
		&\geq \frac{2}{9}\frac{\d}{(r^2 + g^2(-t))^{\a}},
	\end{align*}
	where we used 	that
	\[10/3 g^4(-t) + 2/9 r^4 - 8/9 r^2 g^2(-t) - \frac{1}{9} (r^2+g^2(-t))^2 = \frac{1}{9} \left((r^2 - 5 g^2(-t))^2 + 4 g^4(-t) \right)\geq 0\]
	The full heat operator, then, can be estimated as
	\[
		w_t - w_{rr} - \frac{n - 1}{r} w_r \leq \frac{2\d}{(r^2 + g^2(-t))^{\a}}\left[- \frac{1}{9} + \frac{(1 + \a) r^2 + (1 -\a) g^2(-t)}{r^2 + g^2(-t)} g(-t) g'(-t)\right] < 0
	\]
	for $t$ sufficiently small, using Lemma \ref{lem:g2primesmall} and the fact that $\frac{(1 + \a) r^2 + (1 - \a) g^2(-t)}{r^2 + g^2(-t)} \leq 1 + \a = 5/3$.

	Now choose $\d$ small enough that $w \leq u$ at $t = T_0$ small but fixed and $|x| \geq g(-T_0)$. Indeed, this is always possible with a $\d = \d(u)$, using that $u > 0$, $g$ is smooth on a neighborhood of $(T_0, g(-T_0))$, and the parabolic Hopf lemma and Harnack inequality: $u(T_0, x) \geq c(|x| - g(-T_0))$ there. Similarly, we may ensure that for $\d$ small, $w \leq u$ along a lateral boundary $|x| = R$: there $w(t, R) \leq \d  R^{2 - 2\a}$ when $g(-t) \ll R$, while $u \geq c$. Applying the comparison principle leads to $w \leq u$ on $\{(t, x) : t \in [T_0, 0], g(-t) < |x| < R\}$.

	From the computation above, $w_r(t, g(-t)) = \frac{\d 4 g^3(-t)}{(2 g^2(-t))^{1 + \a}} = \d 2^{1 - \a} g^{1 - 2 \a}(-t) \geq c g^{-1/3}(-t)$, which then immediately gives the conclusion.
\end{proof}

A closer examination of the barrier construction shows that there is an $\a(n)\leq 1$, given as a root to a polynomial, such that for all $\a \in (1/2, \a(n))$ one may conclude that $|\nabla u_+ (t, g(-t))| \geq c g^{1 - 2 \a}(-t) $. It is easy to see that $\a$ increases with $n$, and computation shows that $\a(2), \a(3) < 1$ while $\a(4) = 1$. This will not be relevant below, as this proof does not give a sharp relation even when $\a(n) = 1$: the estimate $|\nabla u_+ (t, g(-t))| \geq c \frac{1}{g(-t)}$ is false in all dimensions. For our purposes, it suffices that $|\nabla u_+ (t, g(-t))| \rightarrow \infty$ as $t \rightarrow 0$.

We are now ready for the main theorem of this section, which asserts that $g$ is strictly monotone.

\begin{theorem}
	For $t$ small, $g'(t) > c g^{-1/3}(t) > 0$. The constant $c$ depends on $u$, but is uniform in $t$.
\end{theorem}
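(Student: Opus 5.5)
The plan is to combine the Stefan condition
\[
	g'(t) = |\nabla u_+(-t, g(t))| - |\nabla u_-(-t, g(t))|
\]
with Lemma \ref{lem:strictlymonotone}, which gives $|\nabla u_+(-t, g(t))| \geq c g^{-1/3}(t)$. Everything then reduces to showing that the negative-phase gradient is of lower order: $|\nabla u_-(-t, g(t))| \leq \frac{c}{2} g^{-1/3}(t)$ for small $t$, or even $|\nabla u_-(-t,g(t))| = o(g^{-1/3}(t))$. In the one-phase case $u_- \equiv 0$ and there is nothing to prove. So assume $u < 0$ on $\{r < g(t)\}$.

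For the negative phase I would use the gradient estimate of Lemma \ref{lem:curvature} at the scale $g(t)$, exactly as in the proof of Lemma \ref{lem:g2primesmall}. Applying it on the cylinder $[-t - g^2(t)/4, -t] \times B_{g(t)/2}(g(t))$ gives
\[
	|\nabla u_-(-t, g(t))| \leq \frac{C}{g(t)} \max_{[-t-g^2(t)/4,\,-t]\times B_{g(t)/2}(g(t))} |u|.
\]
Only the negative part matters for $\nabla u_-$. It would be cleaner to run the Bernstein argument with $m$ replaced by the supremum of $|u|$ over the region, but the lemma as stated already suffices. Writing $M(t)$ for this maximum of $|u|$, the desired inequality follows once $M(t) = o(g^{2/3}(t))$. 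Continuity of $u$ only gives $M(t) \to 0$, which is not enough. Hence the crux is a quantitative smallness of $|u|$ near the interface at scale $g$.

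The main obstacle is this smallness estimate. I would try to get it by splitting $|u|$ into its two phases.

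\emph{Positive phase.} By Lemma \ref{lem:g2primesmall} and the rough bounds, I would first try to use a barrier on $\{r>g\}$. Pairing it with the gradient bound of Lemma \ref{lem:curvature} would control $u_+$ on $B_{2g}$ by $C g \sup|\nabla u_+|$. This is circular, though, so instead I would aim to avoid $u_+$ altogether.

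\emph{Negative phase.} Here the natural route is a direct bound on $u_-$. The function $u_-$ solves the heat equation on the shrinking ball $\{r<g(t)\}$, with zero boundary values on $r = g$ and bounded data at $t=-1$. Since $g^2(s)/s \to 0$ by Lemma \ref{lem:KP}, the ball shrinks faster than parabolically. Comparison with the Dirichlet heat flow on a ball gives $|u_-|\leq C\exp(-\lambda_1\int_{t}^{1} g(s)^{-2}\,ds)$, with $\lambda_1$ the first Dirichlet eigenvalue of the unit ball. Because $g^2(s)\leq \e s$ for small $s$, the integral is at least $\e^{-1}\log(1/t)$, so $\sup|u_-| \leq C t^{\lambda_1/\e}$, which decays faster than any power of $t$. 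Here the comparison function is built with a slightly larger ball radius to handle the time dependence.

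To finish, I would rerun the Bernstein argument of Lemma \ref{lem:curvature} restricted to the negative phase, where the relevant bound involves only $m_- = \sup |u_-|$. This gives $|\nabla u_-(-t,g(t))|\leq C g^{-1}(t)\, t^{N}$ for every $N$. It remains to compare this with $g^{-1/3}(t)$, that is, to check that $t^N g^{-2/3}(t) \to 0$. This needs only a polynomial lower bound $g(t)\geq t^{K}$, which I would obtain from Lemma \ref{lem:strictlymonotone}-type barriers or the elementary lower bounds mentioned in Section \ref{sec:pde}. If that step fails, I would instead apply the negative-phase Hopf/Bernstein bound as $|\nabla u_-|\leq C g^{-1}\sup|u_-|$ and bound $\sup|u_-|$ by $\exp(-c\int_t g^{-2})$, which beats any power of $g(t)$ directly because $\int_t^1 g^{-2}(s)\,ds \gtrsim g(t)^{-2}\cdot(\text{length of time } g\leq 2g(t))$. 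That last step is where I expect the real work.
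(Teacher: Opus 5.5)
Your overall plan matches the paper's: combine the Stefan condition with Lemma~\ref{lem:strictlymonotone}, then show $|\nabla u_-|$ is negligible using decay of $u_-$ in the thin ice ball and a boundary gradient bound. There is a genuine gap, however. Your main route measures the decay of $u_-$ in powers of $t$, and so it needs a polynomial lower bound $g(t)\geq t^K$. No such bound is available at this stage. Lemma~\ref{lem:strictlymonotone} bounds $|\nabla u_+|$ from below, not $g$. The only lower bound on $g$ in Section~\ref{sec:pde}, namely $g(t)/t\to\infty$ in \eqref{eq:strictlymonotone}, is obtained by integrating $g'\geq cg^{-1/3}$, which is the statement you are proving, so citing it is circular. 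Nothing you have used rules out the ice ball collapsing to radius $\ll t^K$ and lingering there with $|\nabla u_-|\approx|\nabla u_+|$; excluding that is essentially the content of the theorem.

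Your fallback, bounding $|u_-|$ by a power of $g(t)$ rather than of $t$, is the right idea and is what the paper does, but your justification does not deliver it. By Lemma~\ref{lem:g2primesmall}, $g^2$ moves at rate at most $2m(T)$, where $m(T)=\sup_{(0,T]}|gg'|\to0$. So the time during which $g\leq 2g(t)$ is only known to be $\gtrsim g^2(t)/m(T)$. This gives $\int g^{-2}\gtrsim 1/m(T)$, and since $m\to0$ at no quantified rate, $e^{-c/m(T)}$ need not be $O(g(t)^N)$.

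What is missing is to use all dyadic scales between $g(t)$ and $g(T)$ at once. Since $|(\log g)'|=|gg'|/g^2\leq m(T)/g^2$, you get $\log\frac{g(T)}{g(t)}\leq m(T)\int_t^T g^{-2}$. Hence $e^{-\lambda\int_t^T g^{-2}}\leq (g(t)/g(T))^{\lambda/m(T)}$, which beats any power of $g(t)$ once $T$ is small. The paper builds this into a single barrier, $w=\max|u_-(T,\cdot)|\,(g(-t)/g(-T))^N\phi(x/g(-t))$, with $\phi$ the first Dirichlet eigenfunction of $B_2$. It satisfies $w_t-\Delta w\geq w\,g^{-2}\bigl[\lambda_1(B_2)-(N+\sup_{B_1}|\nabla\phi|)\,m(T)\bigr]$, which yields $|u_-|\leq C_N g^N$ directly.

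The same smallness of $|gg'|$ is needed at two other points in your sketch:
\begin{itemize}
\item It handles the time dependence of your comparison function. The moving radius contributes a term of size $|gg'|/g^2$ competing with $\lambda_1/g^2$, and enlarging the ball does not remove it, since $g$ is not yet known to be monotone.
\item It is needed in your one-phase Bernstein rerun. At a boundary maximum on the ice side you lose the case hypothesis $|\nabla u_-|\geq|\nabla u_+|$ that Lemma~\ref{lem:curvature} used to discard the velocity term. You then get $\eta|\nabla u_-|\leq C\,m_-(\eta|g'|+\eta|H|+|\nabla\eta|)$, which is still fine because $|g'|=o(1/g)$.
\end{itemize}
With these repairs you obtain $|\nabla u_-(-t,g(t))|\leq C_N g^{N-1}(t)$, and the argument closes. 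The paper obtains this gradient bound instead by flattening the interface at scale $g$ and applying Lemma~\ref{lem_append}.
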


\begin{proof}
	First, set
	\[
		m(t) := \max_{0 < s \leq t} |g(s) g'(s)| \rightarrow 0.
	\]
	We integrate $gg'$ to get
	\begin{equation}\label{eq:locallip}
		|g^2(t) - g^2(s)| \leq 2 m(t)(t - s)
	\end{equation}
	for $s < t$.

	We estimate $|u_-|$, using only that it solves the heat equation on a thin domain. Let $\phi$ be the first eigenfunction of $-\Delta$ on $B_2$ with Dirichlet data $0$, normalized so that $\phi \geq 1$ on $B_1$ (and $\phi \leq C(n)$). We use
	\[
		w(t, x) = \max |u_-(T, \cdot)| \frac{g^N(-t)}{g^N(-T)} \phi\left(\frac{x}{g(-t)}\right)
	\]
	in a comparison principle for the heat equation with $|u_-|$ on $\{t \in [T, 0] : |x| < g(-t)\}$. Indeed, clearly $|u_-| = 0 < w$ on the lateral boundary of this region, while at time $T$ we have that $w \geq \max |u_-(T, \cdot)| \geq |u_-|$ on $\{ |x| < g(-T)\}$. The main thing to check is the heat equation for $w$. Setting $z := x/(g(-t), g := g(-t), g' := g'(-t)$,
	\begin{align*}
		w_t -\Delta w &= \frac{\max |u_-(T, \cdot)|}{g^N(-T)} \phi(z) g^{N-2} \left[-N g g' +  gg'\frac{z\cdot \nabla \phi(z)}{\phi(z)}+ \lambda_1(B_2) \right] \\
		&\geq \frac{\max |u_-(T, \cdot)|}{g^N(-T)} \phi(z) g^{N-2} \left[\lambda_1(B_2) - (N + \sup_{B_1} |\nabla \phi|) |g g'| \right]\\
		&\geq \frac{\max |u_-(T, \cdot)|}{g^N(-T)} \phi(z) g^{N-2} \left[\lambda_1(B_2) - (N + \sup_{B_1} |\nabla \phi|) m(-T) \right].
	\end{align*}
	Fix $N$ large and choose $T=T(u, N)$ sufficiently close to zero such that the term in brackets is strictly positive. Then $w_t - \Delta w \geq 0$, and the comparison principle gives
	\begin{equation} \label{eq:negativeexpdecay}
		|u_-(-t, x)| \leq w(-t, x) \leq \max |u_-(T, \cdot)| \frac{g^N(t)}{g^N(-T)} \max \phi \leq C_N g^N(t) \text{ for small } t>0.
	\end{equation}

	We can turn this estimate on $|u_-|$ into an estimate on $|\nabla u_-|$ along $\{u = 0\}$, with the help of our local Lipschitz bound on $g$. Fix a time $s$, and let
	\[
		v(t, r) = u(s + q^2 t, g(-s - q^2 t) + q r).
	\]
	This is a change of variables which flattens the set $\{u < 0\}$ to $\{r < 0\}$ locally, with a scaling $q$ to be chosen; here we view $u$ as a function of two variables $(t, r)$. We may compute a PDE for $v$: as
	\[
		u_t(t, r) - u_{rr}(t, r) - \frac{n - 1}{r} u_r(t, r) = 0,
	\]
	we get
	\[
		v_t(t, r) - v_{rr}(t, r) + v_r(t, r) \left[q g'(-s - q^2 t) - \frac{n - 1}{ r + g(-s - q^2 t)/q}\right] = 0.
	\]
	Choose $q = g(-s)/4$ and perform this for $t \in [-1, 0]$, $r \in [-1, 0]$. Then
	\[
		g^2(-s - q^2 t)/q^2 = g^2(-s)/q^2  - 2 \int_0^t g(-s - q^2 z) g'(-s - q^2 z) dz,
	\]
	giving
	\[
		|g^2(-s - q^2 t)/q^2 - 16| \leq 2 m(-s + q^2) \leq \e
	\]
	if $s$ is small enough. In particular we get, say, $3 \leq g(-s - q^2 t)/q \leq 5$, and so
	\[
		q |g'(-s - q^2 t)| \leq \frac{\e q}{g(-s - q^2 t)} \leq 1.
	\]
	This means that over the region we are considering, the coefficient in front of $v_r$ is bounded by $C = C(n)$. From Lemma \ref{lem_append} in the Appendix, we get that
	\begin{align*}
 \sup_{-1/4\leq t\leq0}|v_r(t,0-)|
 &\leq C(n)\sup_{[-1,0]\times[-1,0]}|v|
 \leq C(n)
 \sup_{\substack{\tau\in[s-q^2,s]\\|x|\leq g(-\tau)}}|u_-(\tau,x)|\\
 &\leq C_N\sup_{\tau\in[s-q^2,s]}g^N(-\tau)
 \leq C_N' g^N(-s),
\end{align*}
	where $C_N=C_N(u)$.
	The second-to-last step used \eqref{eq:negativeexpdecay}, while the last step follows from $g(-\tau)\leq 5q = (5/4) g(-s)$. Changing variables back to $u$, we conclude that for fixed $N$ and $s$ sufficiently small,
	\begin{equation}\label{eq:fromabove}
		|\nabla u_-(s, g(-s))| \leq \frac{1}{q} |v_r(0,0-)| \leq C_N' g^{N-1}(-s).
	\end{equation}
	Put together \eqref{eq:fromabove} and the conclusion of Lemma \ref{lem:strictlymonotone}. Via the Stefan condition, this gives
	\[
		g'(-s) = |\nabla u_+(s, g(-s))| - |\nabla u_-(s, g(-s))| \geq c g^{-1/3}(-s) - C_N' g^{N-1}(-s) \geq \frac{c}{2} g^{-1/3}(-s)
	\]
	for small $s$.
\end{proof}

From now on, we will also assume that on all of $(0, 1)$,
\begin{equation}\label{eq:a0}
	\frac{g(t)}{\sqrt{t}} \leq A_0 \text{ and } g' \geq 0,
\end{equation}
where the constant $A_0$ can be chosen small in terms of universal dimensional parameters only. This is not truly necessary but simplifies calculations in some places and can always be arranged after one additional degree-0 homogeneous rescaling. Note that under such a rescaling $u_r(t, x) = u(r^2 t, r x)$, the new free boundary curve is exactly $g_r(t) = g(r^2 t)/r$, so if $g(t) \leq \e \sqrt{t}$ on $[0, r^2]$, then $g_r(t) \leq \e \sqrt{t}$ on $[0, 1]$.

\begin{remark}
	Integrating the conclusion $g'(t) \geq c g^{-1/3}(t)$ gives the `non-degeneracy'
	\begin{equation}\label{eq:strictlymonotone}
		\lim_{t \searrow 0} \frac{g(t)}{t} = \infty,
	\end{equation}
	which we will use in later sections to rule out extremely slow melting. Many works on melting singularities, starting with \cite{HV}, work on the ``undercooled" variant of the Stefan problem, and construct additional solutions which do not satisfy the classic Stefan problem we work with and which violate this estimate. In that setting, the barrier argument in Lemma \ref{lem:strictlymonotone} is not available, but it can still be shown that $g' \geq - C_N g^N$ using similar arguments. The analysis in the next sections remains possible, if more technical, under this condition, but we do not pursue this topic further in this paper. 
\end{remark}

\subsection{Monotonicity of the interface implies doubling}

Our next goal is to establish some a priori control over $g'$ in the ``natural" scaling for the problem, which will let us avoid making geometric assumptions on $g$ or $u$. The following is a differential doubling bound on $g$:
\begin{lemma}\label{lem:doubling}
	\[
		g'(t) \leq C \frac{g(t)}{t}.
	\]
	The constant $C$ depends on the solution $u$.
\end{lemma}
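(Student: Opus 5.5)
The plan is to combine the Stefan condition with the local Bernstein estimate of Lemma \ref{lem:curvature}, applied at the spatial scale $g(t)$. The remaining work is to show that $u$ has size $O(g^2(t)/t)$ on that small cylinder. Since $|\nabla u_-|\ge 0$, the Stefan condition gives
\[
	g'(t)\le |\nabla u_+(-t,g(t))|.
\]
As in the proof of Lemma \ref{lem:g2primesmall}, apply Lemma \ref{lem:curvature} on $[-t-g^2(t)/4,-t]\times B_{g(t)/2}(g(t)e_n)$. There $|H|\le 2(n-1)/g(t)$, so
\[
	g'(t)\le \frac{C}{g(t)}\sup_{[-t-g^2(t)/4,-t]\times B_{g(t)/2}(g(t)e_n)}|u| .
\]
By \eqref{eq:negativeexpdecay} the negative part is $O(g^N)$, which is harmless. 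It therefore suffices to prove
\[
	\sup_{[-2t,-t]\times B_{2g(t)}} u_+ \le C\frac{g^2(t)}{t},
\]
with $C$ depending on $u$.

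To get this bound I would use the monotonicity $g'\ge 0$ from the previous theorem. It implies that for all backward times $s\in[t,2t]$ the ice contains the fixed ball $B_{g(t)}$. Hence on $[-2t,-t]\times (B_R\setminus B_{g(t)})$ the function $u_+$ is a subsolution of the heat equation (a maximum of a caloric function and $0$), it vanishes on $\partial B_{g(t)}$, and it is bounded by $M:=\sup|u|$. The first step is to bound $u_+$ at the parabolic scale: for $|x|\sim\sqrt t$, I want $u_+\lesssim$ something controlled by the exterior-ball geometry. The second step is to propagate this inward to $|x|\le 2g(t)$. I would do this with an exterior barrier of the form
\[
	w=K\Big(1-\big(g(t)/|x|\big)^{n-2}\Big)\cdot(\text{value at scale }\sqrt t)
\]
for $n\ge3$ (logarithmic for $n=2$), which is harmonic and vanishes on $\partial B_{g(t)}$. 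At $|x|\le 2g(t)$ this barrier reduces the bound by a factor of order $1$ relative to the value at distance $g$. So what really needs to be shown is that $u_+$ at distance $\sim g(t)$ from the origin is already $O(g^2/t)$.

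For that I would compare with a quadratic supersolution adapted to the slow motion of the interface, say
\[
	w(\tau,x)=K\,\frac{|x|^2-g^2(-\tau)+2n(\tau+2t)}{t}
\]
or a variant, on the exterior of the moving ball $\{|x|>g(-\tau)\}$. I would check $w_t-\Delta w\ge0$ using $g'\ge0$ and $gg'\to0$ (Lemma \ref{lem:g2primesmall}), and $w\ge u$ on the parabolic boundary of the region $\{|x|\le\sqrt t\}$, by choosing $K$ in terms of $M$. The positive additive term $2n(\tau+2t)/t$ is dangerous because it is of order $1$, not $g^2/t$, so it will have to be removed. To do so I would use the scale invariance: work with the rescaled solution $u(t\,\cdot,\sqrt t\,\cdot)$ on a unit parabolic cylinder, in which the ice is a ball of radius $\rho=g(t)/\sqrt t\to0$ (Lemma \ref{lem:KP}). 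The estimate then becomes $|\nabla u_+|\le C\rho$ at the interface. This is the main obstacle: a pure exterior-Dirichlet barrier only gives gradient $\sim M/\rho$, which is far too large.

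So the smallness must come from the Stefan coupling rather than from the heat equation alone. My first attempt would be a contradiction/compactness argument on the scale-invariant quantity $\Lambda(t)=t g'(t)/g(t)$. Suppose $\Lambda(t_k)\to\infty$. Rescale at scale $g(t_k)$ rather than $\sqrt{t_k}$, normalizing by $|\nabla u_+|$ at the interface. Lemma \ref{lem:curvature} and the bound $gg'\to0$ give compactness of the rescaled solutions. Integrating $g'\ge \Lambda g/t$ backward over a dyadic range then contradicts the a priori growth $g(t)\le A_0\sqrt t$ together with the non-degeneracy \eqref{eq:strictlymonotone}. The constant $C$ then depends on $u$, as stated.
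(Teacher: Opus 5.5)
Your reduction is fine. The Stefan condition, Lemma \ref{lem:curvature} on the cylinder of size $g(t)$ (exactly as in Lemma \ref{lem:g2primesmall}) and \eqref{eq:negativeexpdecay} do reduce the lemma to showing $u_+\le Cg^2(t)/t$ on $[-2t,-t]\times B_{2g(t)}$, and that bound is true. But that bound is the whole content of the lemma, and you do not prove it. The barriers you list fail, as you observe yourself, and the closing compactness argument does not work, for two reasons. First, a blow-up at scale $g(t_k)$ only sees time intervals of length $\sim g^2(t_k)\ll t_k$. Since $gg'\to0$, its normalized limit is at best a caloric function vanishing on a stationary ball. That contradicts nothing and says nothing about $g'$ on $[t_k,2t_k]$. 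Second, even if you had $tg'/g\ge\Lambda_k$ on all of $[t_k,2t_k]$, integration gives $g(2t_k)\ge 2^{\Lambda_k}g(t_k)$. On the other hand, $g\le A_0\sqrt t$ and \eqref{eq:strictlymonotone} only give $g(2t_k)/g(t_k)\le C t_k^{-1/2}$. This rules out $\Lambda_k\gtrsim\log(1/t_k)$, not merely $\Lambda_k\to\infty$. Growth information of this kind controls averages of $tg'/g$, never pointwise values, so a genuine PDE estimate is needed.

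The missing idea is a one-sided bound on $u_t$ combined with the time primitive of $u$; this is where the Stefan structure enters.
\begin{enumerate}
\item Since $g'\ge0$, differentiating $u(t,g(-t))=0$ gives $u_t^\pm=g'(-t)\,u_r^\pm\ge0$ on the interface from both sides.
\item The maximum principle for $u_t$ in each phase, with $|u_t|$ bounded on the parabolic boundary of a fixed cylinder, then yields $\Delta u=u_t\ge-C$.
\item The primitive $U(t,x)=\int_t^0u(s,x)\,ds$ satisfies $\Delta U=\theta(u)(0,\cdot)-\theta(u)(t,\cdot)$, which is bounded. Radial symmetry and $U(t,0)\le0$ then give $U(t,x)\le C|x|^2$.
\item The bound $u_t\ge-C$ gives $U(-t,x)\ge t\,u(-t,x)-Ct^2$.
\end{enumerate}
Combining the last two steps,
\[
u(-t,x)\le C\left(t+\frac{|x|^2}{t}\right),
\]
which is $\le Cg^2(t)/t$ for $|x|\le 2g(t)$, because $g(t)\ge t$ for small $t$ by \eqref{eq:strictlymonotone}. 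From there your Lemma \ref{lem:curvature} step gives $g'\le |\nabla u_+|\le Cg/t$. Alternatively, as in the paper, integrate $\Delta u\ge-C$ radially from $g(t)$ to $2g(t)$.
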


\begin{proof}
	From $g'(-t) \geq 0$, we have that $u^\pm_t(t, g(-t)) \geq 0$. We also have that $|u_t| \leq C(u)$ on the parabolic boundary of $A = [-9/10, 0)\times B_S$, with with $S$ chosen so that
\[
    2\max_{s\in[0,1]}g(s)<S<R \text{ (the size of the fixed domain on which }u \text{ is defined).}
\]  Apply the maximum principle to $u_t$ on $A \cap \{u > 0\}$ and $A \cap \{u < 0\}$, separately, to get that
	\begin{equation}\label{eq:utboundbelow}
		\Delta u = u_t \geq -C.
	\end{equation}

	Take the primitive of $u$ in $t$ (this is analogous to the Duvaut transformation common when studying the one-phase Stefan problem, but in the two-phase case does not solve an obstacle problem):
	\[
		U(t, x) = \int_{t}^0 u(s, x) ds.
	\]
	This satisfies in the sense of distributions
	\[
		\Delta U(t, x) = \int_{t}^0 \Delta u(s, x) ds = \int_t^0 \theta(u)_t(s, x) ds = \theta(u)(0, x) - \theta(u)(t, x).
	\]
	In particular, the right-hand side is controlled by $1 + 2 \Vert u\Vert_{L^\infty}\leq C(u)$, so
	\[
		|\Delta U(t, x)|\leq C(u)
	\]
	uniformly in $t, x$. Using basic elliptic estimates and the fact that $U$ is radial, 
	\[
		U_r(t,r) = r^{1-n} \int_0^r s^{n-1} \Delta U(t,s) \> ds, \quad |U(t, x) - U(t, 0)| \leq C |x|^2.
	\]
	As $u(t, 0) \leq 0$, we have $U(t, 0) \leq 0$, so $U(t, x) \leq C |x|^2$. However, we also can estimate $U$ using \eqref{eq:utboundbelow}:
	\[
		U(t, x) = \int_{t}^0 u(s, x) ds \geq \int_t^0 u(t, x) + \int_t^s u_t(z, x) dz ds \geq |t| u(t, x) - C t^2.
	\]
	Combining gives
	\[
		u(-t, x) \leq C\left(t + \frac{|x|^2}{t}\right).
	\]

	Next, we turn this into a bound on $|\nabla u_+(-t, g(t))|$. Indeed, viewing $u$ as a function of two variables $(t, r)$, we have from \eqref{eq:utboundbelow} that
	\[
		r^{1-n} (r^{n-1}u_r)_r = u_{rr}(-t, r) + \frac{n - 1}{r} u_r(-t, r) \geq - C.
	\]
	Integrating in $r$ from $g(t)$ to $z \in [g(t), 2g(t)]$ leads to
	\[
		u_r(-t, z) \geq 2^{1-n} u_r(-t, g(t)+) - C g(t).
	\]
	Integrating another time in $z$, from $g(t)$ to $2g(t)$, gives
	\[
		u(-t, 2 g(t)) \geq u(-t, g(t)) + [2^{1-n} u_r(-t, g(t)+) - C g(t)]g(t) \geq 2^{1-n} u_r(-t, g(t)) g(t) - C g^2(t).
	\]
	Recalling that in this notation $|\nabla u_+(-t, g(t))| = u_r(-t, g(t)+)$,
	\[
		|\nabla u_+(-t, g(t))| \leq C \left[g(t) + \frac{u(-t, 2 g(t))}{g(t)}\right] \leq C \left[g(t) + \frac{t}{g(t)} + \frac{g(t)}{t}\right] \leq C \left[\frac{t}{g(t)} + \frac{g(t)}{t}\right].
	\]

	We can use this to estimate $g'$ from above using the Stefan condition:
	\[
		g'(t) = |\nabla u_+(-t, g(t))| - |\nabla u_-(-t, g(t))| \leq C \left[\frac{t}{g(t)} + \frac{g(t)}{t}\right].
	\]
	The conclusion follows after using \eqref{eq:strictlymonotone}.
\end{proof}

For future reference, we also record an integrated form of this estimate:
\[
	z^{C}[z^{-C} g^2(z)]'  = [g^2(z)]' - C \frac{g^2}{z} \leq 0.
\]
Integrate on $z \in [t, s]$ to give
\[
	g^2(s) \leq \left(\frac{s}{t}\right)^C g^2(t).
\]
If $s \leq 2 t$, this becomes
\begin{equation}\label{eq:intdoubling}
	g^2(s) \leq C g^2(t).
\end{equation}

\newcommand{\Ip}{I^{\text{after}}}
\newcommand{\Itrans}{I^{\text{trans}}}
\renewcommand{\Im}{I^{\text{before}}}

\section{The potential representation}\label{sec:rep}

In this section we introduce the core premise of the arguments and perform some basic analysis which will be useful later. We choose the normalized initial time strictly inside the original
interval of classical existence, so that $u$ is classical for
$-1\leq t<0$ and the conclusions of the preceding section hold
on the normalized interval. We begin with the potential representation defining
\[
	K(t, x) := \int_t^1\int_{|y| = g(s)} G(s - t, y - x)  g'(s) dS(y) ds \in [0, +\infty]. 
\]
This quantity should be interpreted as follows: as $u$ is a solution of the Stefan problem, it formally satisfies the heat equation with a right-hand side: $u_t - \Delta u = [u - \theta(u)]_t$. The function $-K(-t, x)$ is a solution to the Cauchy problem for the same PDE (i.e. on $(-1, 0] \times \R^n$)  with initial condition $0$. We should expect that, so long as $K(t, x)$ is well-defined and finite, 
and $u(t,x)+K(-t,x)$ should be caloric.

\begin{lemma}
	The integral defining $K(-t, x)$ is absolutely convergent for all $(t, x) \in Q_R$. Moreover, $u$ admits the representation formula
	\[
		u(t, x) = \ufree(t, x) - K(-t, x), 
	\]
	where $\ufree$ solves the boundary value problem
	\[
	\begin{cases}
		(\ufree)_t - \Delta \ufree = 0 & \text{ on } (-1, 0] \times B_R, \\
		\ufree(-1, x) = u(-1, x), & x \in B_R, \\
		\ufree(t, x) = u(t, x) + K(-t, x), & (t, x) \in [-1, 0) \times \partial B_R.
	\end{cases}
	\]
\end{lemma}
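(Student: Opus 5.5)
The argument has two parts: absolute convergence of $K$, then the identity $u + K(-t,\cdot) = \ufree$.

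\textbf{Convergence.} The integrand $G(s-t,y-x)g'(s)$ is nonnegative by \eqref{eq:a0}, so it suffices to bound $K$ from above. For fixed $(t,x)$ I will split the $s$-integral at $s=t+\epsilon$.

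On $s \geq t+\epsilon$ the kernel is bounded, and $\int g^{n-1}g'\,ds = g^n/n$ is finite, so this part is harmless.

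Near $s=t$ I use the standard surface-integral bound for the heat kernel on a sphere of radius $g(s)$:
\[
\int_{|y|=g(s)} G(s-t,y-x)\,dS(y) \leq C\min\bigl\{(s-t)^{-1/2},\ g(s)^{n-1}(s-t)^{-n/2}\bigr\}.
\]
This holds because a smooth hypersurface with curvature bounded by $(n-1)/g(s)$ sees the kernel like a one-dimensional Gaussian once $\sqrt{s-t}\lesssim g(s)$. For $t>0$ the speed $g'$ is bounded near $s=t$, since $g$ is smooth on $(0,1)$ and Lemma \ref{lem:doubling} controls it. Hence $(s-t)^{-1/2}$ is integrable and this piece converges.

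For $t \le 0$ (the time $-t\ge0$ and the singular point) no pole is hit when $t<0$. When $t=0$ one needs $\int_0 g^{n-1}g's^{-n/2}e^{-\cdots}ds<\infty$. This I would get from $g'\le Cg/s$ (Lemma \ref{lem:doubling}) together with the bounds on $g$, or more robustly by a monotone limit argument from the identity itself: $K(-t,x)=\ufree-u$ is bounded for $t<0$, and Fatou then gives finiteness at the limit point. I expect the $(0,0)$ case to be the main delicate point, and I would use this Fatou route first.

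\textbf{Representation.} Set $\tilde u(t,x) := u(t,x)+K(-t,x)$ on $(-1,0)\times B_R$.

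\emph{Step 1: $-K$ solves the source problem.} Since $u$ is classical on $[-1,0)$, in distributions $u_t-\Delta u = (u-\theta(u))_t = -(\chi_{\{u>0\}})_t$. The set $\{u>0\}=\{|x|>g(-t)\}$ shrinks its complement as time increases, so $-(\chi_{\{u>0\}})_t$ is the measure $-g'(-t)\,\mathcal H^{n-1}\llcorner\{|x|=g(-t)\}\,dt$. Duhamel's formula, with zero data at time $-1$, identifies $-K(-t,x)$ as the solution of the heat equation with this source on $\mathbb{R}^n$. To check this rigorously I would pair with a test function, change variables $\tau=-s$, and use Fubini, which is justified by Step~0 (absolute convergence).

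\emph{Step 2: $\tilde u$ is caloric.} Consequently $\tilde u_t-\Delta\tilde u=0$ in $\mathcal{D}'((-1,0)\times B_R)$, and by Weyl's lemma $\tilde u$ is smooth and caloric.

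\emph{Step 3: matching the data.} At $t=-1$ we have $K(1,x)=0$, so $\tilde u(-1,\cdot)=u(-1,\cdot)$. On $\partial B_R$ the values agree by definition. Continuity of $K$ up to $t=-1$ and on the lateral boundary, which lies at distance $\ge R/2$ from the interface, follows from the kernel bounds above. The maximum principle then gives $\tilde u=\ufree$ on $[-1,0)$.

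\emph{Step 4: the time $t=0$.} Here continuity of $u$ and of $\ufree$ up to $t=0$ extends the identity, which at the same time gives the finiteness of $K(0,x)$ via the Fatou argument described above.
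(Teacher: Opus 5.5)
Your outline follows the paper's route: pair with a test function and use Fubini to see that $u+K(-t,\cdot)$ is caloric, compare with $\ufree$ for $t<0$, and at the singular point use boundedness of $\ufree-u$ together with Fatou's lemma to get $K(0,0)<\infty$. The gap is in Step 4. Fatou only gives
\[
K(0,0)\le\liminf_{t\nearrow0}K(-t,0)=\ufree(0,0)-u(0,0).
\]
Your assertion that continuity of $u$ and $\ufree$ up to $t=0$ ``extends the identity'' silently assumes $K(-t,0)\to K(0,0)$, which is exactly what has to be proved. There is no monotonicity to fall back on. For $|t|<s$, the factor $(s-|t|)^{-n/2}e^{-g^2(s)/(4(s-|t|))}$ decreases as $|t|\searrow0$ as soon as $s-|t|>g^2(s)/(2n)$. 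So a priori a fixed amount of mass could concentrate near the moving pole $s=|t|$ and disappear in the limit. The equality is essential, because everything afterwards rests on $K(t,g(t))-K(0,0)=\ufree(-t,g(t))-\ufree(0,0)$. For $x\neq0$ this is easy, since $g(s)\to0$ keeps the pole away from the support and dominated convergence applies directly; only $x=0$ is delicate.

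The missing step is the paper's splitting of $[|t|,1]$ at $s=2|t|$.

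On $[2|t|,1]$ you have $(s-|t|)^{-n/2}\le2^{n/2}s^{-n/2}$ and $e^{-g^2(s)/(4(s-|t|))}\le e^{-g^2(s)/(4s)}$. So the integrand is dominated by $2^{n/2}$ times the $t=0$ integrand, which is integrable by your Fatou step, and dominated convergence applies.

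On $[|t|,2|t|]$ you need quantitative input. Using $g'\le Cg/s$ from Lemma \ref{lem:doubling} and $g(|t|)\le g(s)\le Cg(|t|)$ from \eqref{eq:intdoubling}, this piece is bounded by
\[
C\,\frac{g^2(|t|)}{|t|}\int_0^{|t|/g^2(|t|)}w^{-n/2}e^{-1/(4w)}\,dw .
\]
This tends to $0$ because $g^2(s)/s\to0$: the integral is bounded for $n\ge3$ and grows only like $\log(|t|/g^2(|t|))$ for $n=2$.

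Separately, the alternative you mention for finiteness of $K(0,0)$, via $g'\le Cg/s$ and the bounds on $g$, does not work on its own. It only gives $g^{n-1}g's^{-n/2}\le C(g/\sqrt{s})^n s^{-1}$, which is not integrable at $0$ without a rate for $g/\sqrt{s}\to0$. The Fatou route is the correct one there.
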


\begin{proof}
	So long as $|x| \neq g(-t)$, it follows from the fact that $G$ is smooth away from $(0, 0)$ 
	as well as
	$$\int_0^1 \int_{|y| = g(s)} g'(s) dS(y) \> ds = n |B_1| \int_0^1  g(s)^{n-1} g'(s) \> ds = |B_1| g^n(1) < \infty$$
	that $K(-t, x)$ is absolutely convergent and smooth. 
	By assumption (2), for $x\in \partial B_R$, $|x-y| \geq R/2$, implying that
	$$\sup_{-1\leq t \leq 0, |x|=R} K(-t,x) \leq C(R,n) g^n(1).$$
	By dominated convergence we obtain continuity at the lateral boundary.
	It is also routine to check that for $|x| = g(-t)$ and $t < 0$, $K(-t, x)$ remains absolutely convergent and continuous: on a small neighborhood of this point we have $|g'| \leq C$ and $c \leq g(-t)/g(s) \leq C$, so integrating the exponential explicitly in cylindrical coordinates gives
	\begin{align*}
		\int_{|y| = g(s)} G(s + t, y - x) dS(y) &\leq C g^{n-1}(s)(s + t)^{-n/2} \int_{S^{n-1}}  e^{- c(1 - y_n)/(s + t)} dS(y) \\
		&\leq C (s + t)^{-n/2}\int_{-1}^1 e^{-c(1 - y_n)/(s + t)} (1 - y_n^2)^{\frac{n-3}{2}} dy_n \\
		&\leq C (s + t)^{\frac{- 1}{2}}.
	\end{align*}
	See the proof of Lemma \ref{lem:transfromabove} below for an explicit computation of this integral. Then estimating $K$ near the pole,
	\[
		\int_{-t}^{-t+\e}\int_{|y| = g(s)} G(s + t, y - x)  g'(s) dS(y) ds \leq C \int_{-t}^{-t + \e} (s + t)^{-1/2} ds \leq C\sqrt{\e}.
	\]
	The constant in this estimate can be chosen uniformly
for $(t,x)$ near any fixed interface point with
$-1<t<0$. Splitting off this uniformly small contribution
and applying dominated convergence to the remaining
integral proves continuity of $K(-t,x)$.

	We will now justify the relation
	\[
		u(t, x) = \ufree(t, x) - K(-t, x), 
	\]
	for $t < 0$. In particular, we show that $u(t, x) + K(-t, x)$ solves the heat equation on $(-1, 0) \times B_R$ in the sense of distributions, after which this follows by comparison principle and the definition of $\ufree$. To see this, take a $\phi \in C_c^\infty((-1, 0) \times B_R)$, and obtain from the fact that $u$ is a distributional solution to the Stefan problem that
	\[
		\int \theta(u) \phi_t + u \Delta \phi dx dt = 0,
	\]
	so
	\[
		\int u \phi_t + u \Delta \phi dx dt = -\int [\theta(u) - u] \phi_t dx dt = -\int_{\{u > 0\}} \phi_t dx dt.
	\]
	On the other hand,
	\begin{align*}
		\int (\phi_t + \Delta \phi) K(-t, x) dx dt &= \int \int_{|y| = g(s)} \int_{-s}^0 \int G(s + t, y - x) g'(s) (\phi_t + \Delta \phi)(t, x) dx dt dS(y) ds \\
		&= -\int \int_{|y| = g(s)} \phi(-s, y) g'(s) dS(y) ds;\\
	\end{align*}
	extend $\phi$ by zero outside $B_R$. Fubini's theorem
applies because
\[
\begin{aligned}
&\int_0^1\int_{|y|=g(s)}g'(s)
  \int_{-s}^0\int_{\mathbb R^n}
  G(s+t,y-x)|\phi_t+\Delta\phi|(t,x)
  \,dx\,dt\,dS(y)\,ds\\
&\qquad\leq
  \|\phi_t+\Delta\phi\|_{L^\infty}
  |B_1|g^n(1)<\infty.
\end{aligned}
\]
	After changing variables, we have shown that
	\begin{align*}
		\int \int (\phi_t + \Delta \phi) (u(t, x) + K(-t, x)) dx dt &= \int \left[\int_{|x| < g(-t)} \phi_t dx - \int_{|x| = g(-t)} \phi(t, x) g'(-t) dS(x)\right] dt \\
		&= \int \partial_t \int_{|x| < g(-t)} \phi dx dt \\
		&= 0,
	\end{align*}
	as claimed. As $g'$ is bounded up to $t=1$, 
	$$ \sup_{x\in \bar B_R} K(a,x) \leq C \sqrt{1-a} \to 0, a \uparrow 1,$$ 
	the initial data is attained uniformly.

	It remains to check that $K(-t, x)$ remains absolutely convergent at $t = 0$, $x = 0$. As $u$, $\ufree$ are bounded (by the maximum principle, $\Vert \ufree\Vert_{L^\infty(Q_R)} \leq \Vert u\Vert_{L^\infty(Q_R)} + C(R,n) g^n(1)$), we have that $K(-t, x)$ is bounded uniformly in $t$ for $t < 0$. We also have that the integrands $\int_{|y| = g(s)} G(s + t, x_t + y) dS(y) 1_{\{s > -t\}}$ converge pointwise to $\int_{|y| = g(s)} G(s, y) dS(y)$ for every $s > 0$ and curve $x_t \rightarrow 0$. Then by Fatou's lemma (with respect to the measure $g'(s)ds$), $K(0, 0)$ is absolutely convergent and
	\[
		0 \leq K(0, 0) \leq \liminf K(-t, x_t) = \liminf \ufree(t, x_t) - u(t, x_t) = \ufree(0, 0) - u(0, 0).
	\]

	Finally, to see that this is actually an equality, select $x_t = 0$ and consider $K(-t, 0)$. Restrict the region of integration to $[-2t,1]$: here, we have that $(t + s)^{-n/2} \leq 2^{n/2} s^{-n/2}$ while $e^{-g^2(s)/(4(s + t))} \leq e^{-g^2(s)/(4s)}$, so the integrand
	\[
		\int_{|y| = g(s)} G(s + t, y)  g'(s) dS(y) \leq 2^{n/2} \int_{|y| = g(s)} G(s, y)  g'(s) dS(y).
	\]
	As we have just established that the right-hand quantity is integrable, these pieces converge by the dominated convergence theorem, i.e.
	\[
		\lim_{t \rightarrow 0} \int_{-2t}^1 \int_{|y| = g(s)} G(s + t, y)  g'(s) dS(y) ds = K(0, 0).
	\]
	Over the remaining interval $[-t, -2t]$ we instead use Lemma \ref{lem:doubling} and the fact that $g^2(s)/(s) \rightarrow 0$:
	\begin{align*}
		\int_{-t}^{-2t} \int_{|y| = g(s)} G(s + t, y)  g'(s) dS(y) ds &\leq C \int_{-t}^{-2t} \frac{g^{n}(s)}{s} (s + t)^{-n/2} e^{-g^2(s)/(4(s + t))} ds \\
		&\leq C \int_0^{-t} \frac{g^n(-t)}{ -t z^{n/2}} e^{-g^2(-t)/(4z)} dz \\
		&\leq C \frac{g^2(-t)}{-t} \int_0^{-t/g^2(-t)} z^{-n/2} e^{-1/(4z)} dz.
	\end{align*}
	The second-to-last step used $g(-t) \leq g(-2 t)\leq C g(-t)$, while the last was another change of variables. The integral is always convergent near $0$; in $n \geq 3$ it is bounded by a constant by integrating to $\infty$, while if $n = 2$ it is controlled by $C \log(-t/g^2(-t))$. Either way, this quantity goes to $0$ as $-t$, and hence $g^2(-t)/(-t)$, goes to $0$.
	This means that
	\begin{align*}
		K(0, 0) &= \lim_{t \rightarrow 0} \int_{-2t}^1 \int_{|y| = g(s)} G(s + t, y)  g'(s) dS(y) ds \\&= \lim_{t \rightarrow 0} \int_{-t}^1 \int_{|y| = g(s)} G(s + t, y) g'(s) dS(y) ds = \lim_{t \rightarrow 0} K(-t, 0),
	\end{align*}
	as we were trying to show.
\end{proof}
The basic premise of the rest of our approach is as follows: we evaluate the formula above at $0 = u(t, g(-t))$, a free boundary point. At such points, the contribution from $\ufree$ is straightforward to estimate: despite the dependence on $K$ in the boundary condition, this function is caloric and therefore smooth in a parabolic neighborhood of $0$, and so admits an expansion
\[
	\ufree(-t, g(t)) = \ufree(0, 0) - t (\ufree)_t(0, 0) + \frac{g(t)^2}{2} (\ufree)_{rr}(0, 0) + \ldots
\]
(odd terms in $r$ are missing because $\ufree$ is radial). This is not directly useful due to the presence of the first, zero-order term, which is generally nonzero and dominates the expression. Therefore, we instead look at
\[
	0 = u(-t, g(t)) - u(0, 0),
\]
which gives instead that
\[
	|K(t, g(t)) - K(0, 0)| \leq  C(t + g^2(t)).
\]
Looking at the integral $K(t, g(t))$, the integrand has a pole at $(t, g(t))$, and so we might hope to estimate it roughly as $g'(t)$ multiplied by some explicit function of $t$. This leads to an approximate differential relation for $g$, which can then be integrated to find its asymptotic behavior near $t = 0$.

The actual arguments below are more subtle (after all, $K(t, g(t))$ is a nonlocal quantity which cannot actually be estimated in such a way rigorously, the resulting differential inequalities are nontrivial to integrate, and there are certain anomalous behaviors for $g$ which must be ruled out separately), but we will still begin by studying the behavior of the integral $K(t, g(t))$.

\section{Potential estimates on the interface} \label{sec:potential}

Let $I_t = K(t, g(t)) - K(0, 0)$. To pin down the behavior of this quantity, we will break the (combined) integrand into three regions:
\[
	I_t = \Ip_t + \Itrans_t + \Im_t,
\]
where
\begin{align*}
	\Ip_t &= - \int_0^t \int_{|y| = g(s)} G(s, y)  g'(s) dS(y) ds, \\
	\Itrans_t &= \int_t^{t + L(t)}\int_{|y| = g(s)} [G(s - t, y - g(t)) - G(s, y)] g'(s) dS(y) ds, \\
	\Im_t &= \int_{t + L(t)}^1\int_{|y| = g(s)} [G(s - t, y - g(t)) - G(s, y)] g'(s) dS(y) ds.
\end{align*}
Here $L(t)\geq 0$ is a function to be chosen below. The basic idea is to choose $L(t)$ in a way where $\Itrans_t$ has a positive integrand (which is clearly the case if $L(t)$ is very small, as the first term has a pole at $s = t$ while the second does not) and is much larger than $\Im_t$ (where we expect heavy cancellation). This will give that $I_t$ is dominated by $\Itrans_t$, which is comparable to $g'(t)$ multiplied by an explicit function of $t$, and $\Ip_t$, which is a kind of weighted primitive of $g'$. As observed earlier, we also have that $I_t$ is overall small: $|I_t| = |\ufree(-t, g(t)) - \ufree(0, 0)| \leq C (t + g^2(t))$. Together, this will lead to differential inequalities for (some transformations of) $g$, which will be studied in the next section.

We note that the behavior is somewhat different in 2D, where the pole is less severe and so $\Im_t$ contributes substantially to the total integral.

\subsection{Estimating the integrals: transition region}

There are two goals in estimating $\Itrans_t$: (1) to estimate the quantity $\int_{|y| = g(s)} G(s - t, y - x) - G(s, y) dS(y)$ pointwise from below (and above), and (2) use this to also estimate the largest choice one can make for $L(t)$ for which the integrand remains positive. The first goal is a completely explicit geometric calculation, as this spatial integrand has only a superficial dependence on $g$.

We begin by estimating, for $X = R' e_n$ with $R' < R$ and $e_n$ an arbitrary unit vector,
\[
J(R, R', \t) := \int_{|y| = R} G(\t, y - X) dS(y) = c(n)\int_{S^{n-1}} \t^{-n/2} e^{-\frac{|RY - R'e_n|^2}{4\t}}  R^{n-1} dS(Y).
\]
\begin{lemma}\label{lem:gaussianfrombelow}
	If $\frac{R - R'}{\sqrt{\t}} \leq \d < 1$, then
	\[
		J(R, R', \t) \geq c(n) e^{-C\d} \t^{-1/2} \min\{(R/\sqrt{\t})^{n-1}, 1\}.
	\]
\end{lemma}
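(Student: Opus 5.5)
We plan to compute $J$ directly in spherical coordinates adapted to the axis $e_n$. Write $Y\in S^{n-1}$ with $Y_n=\cos\varphi$. Then
\[
|RY-R'e_n|^2=(R-R')^2+2RR'(1-\cos\varphi).
\]
The factor $e^{-(R-R')^2/(4\t)}\geq e^{-\d^2/4}\geq e^{-C\d}$ pulls out immediately. We are left with
\[
J\geq c(n)e^{-C\d}\t^{-n/2}R^{n-1}\int_0^\pi e^{-\frac{RR'(1-\cos\varphi)}{2\t}}\sin^{n-2}\varphi\,d\varphi .
\]
Since $R'<R$, we may replace $RR'$ by $R^2$ in the exponent, which only decreases the integrand. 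So it suffices to bound from below
\[
F(a):=\int_0^\pi e^{-a(1-\cos\varphi)}\sin^{n-2}\varphi\,d\varphi,\qquad a:=\frac{R^2}{2\t}.
\]

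We split into two cases according to the size of $a$.

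\emph{Case $a\leq 1$} (i.e.\ $R\lesssim\sqrt\t$). Restrict to $\varphi\in[0,\pi/2]$. There $1-\cos\varphi\leq 1$, so the exponential is at least $e^{-1}$, and $F(a)\geq c(n)$. This gives
\[
J\geq c\,e^{-C\d}\t^{-n/2}R^{n-1}=c\,e^{-C\d}\t^{-1/2}(R/\sqrt\t)^{n-1},
\]
which is the claimed bound, since in this case the minimum is (up to constants) $(R/\sqrt\t)^{n-1}$.

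\emph{Case $a\geq 1$.} Restrict to $\varphi\in[0,a^{-1/2}]\subset[0,1]$. There $1-\cos\varphi\leq\varphi^2/2\leq 1/(2a)$ and $\sin\varphi\geq\varphi/2$. Hence
\[
F(a)\geq c\int_0^{a^{-1/2}}\varphi^{n-2}\,d\varphi=c\,a^{-(n-1)/2}.
\]
Substituting,
\[
J\geq c\,e^{-C\d}\t^{-n/2}R^{n-1}(\t/R^2)^{(n-1)/2}=c\,e^{-C\d}\t^{-1/2},
\]
which again matches the minimum (now equal to $1$ up to constants).

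For $n=2$ the weight $\sin^{0}$ is trivial and the same computation applies verbatim.

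The only point requiring care is that the lower bound must not degrade with $R'$. This is handled by discarding the radial gap through the hypothesis $(R-R')/\sqrt\t\leq\d$ and using $R'\leq R$ in the angular term; the restriction $\d<1$ makes the prefactor uniformly comparable to $e^{-C\d}$. No step seems genuinely obstructive. The main thing to verify is the constant bookkeeping in the case split at $a\approx1$, i.e.\ that the two regimes glue to $\min\{(R/\sqrt\t)^{n-1},1\}$ up to a dimensional constant.
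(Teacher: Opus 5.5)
Your proposal is correct and follows essentially the same route as the paper. Both arguments use the identity $|RY-R'e_n|^2=(R-R')^2+2RR'(1-Y_n)$, absorb the radial gap into $e^{-C\d}$, use $R'\leq R$ to pass to $R^2(1-Y_n)/\t$ in the exponent, and restrict to the spherical cap $1-Y_n\lesssim \t/R^2$. Your case split in $a=R^2/(2\t)$ just makes explicit the cap-area bound $c\min\{(\t/R^2)^{\frac{n-1}{2}},1\}$ that the paper quotes directly.
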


\begin{proof}
	Write $|RY - R'e_n|^2 = (R - R')^2 + 2 R R' (1 - Y_n) \leq C[(R - R')^2 + R^2 (1 - Y_n)]$ by expanding $Y$ into coordinates. Then
	\[
		e^{-\frac{|RY - R'e_n|^2}{4\t}} \geq e^{-C\d}e^{- C R^2 (1 - Y_n)/\t}.
	\]
	Now we compute the integral:
	\[
		\int_{S^{n-1}} e^{- C R^2 (1 - Y_n)/\t} dS(Y) \geq \int_{Y \in S^{n-1}: 1 - Y_n \leq \t/R^2 } e^{-C} \geq c \min\{(\t/R^2)^{\frac{n-1}{2}}, 1\}.
	\]
	The last inequality comes from estimating the area of a spherical cap. Plugging into the definition of $J$,
	\[
		J(R, R', \t) \geq c(n) \t^{-n/2} R^{n-1} e^{-C\d} \min\{(\t/R^2)^{\frac{n-1}{2}}, 1\} \geq c(n)e^{-C\d} \t^{-1/2} \min\{(R/\sqrt{\t})^{n-1}, 1\} .
	\]
\end{proof}

For sufficiently small $t$, uniformly for $t<s\leq2t$,
\[
 \frac{g(s)-g(t)}{\sqrt{s-t}}\leq\frac12.
\]
To see this, use \eqref{eq:intdoubling} to get
\[
	g^2(t) \leq g^2(s) \leq C g^2(t),
\]
so integrating the original estimate in Lemma \ref{lem:doubling} more carefully will give
\[
	(g(s)-g(t))^2 <2 \leq g^2(s) - g^2(t) \leq C \int_t^s \frac{g^2(z)}{z} dz \leq C (s - t) \frac{g^2(t)}{t}\leq (s - t)
\]
for small $t$ depending on $u$.

Moreover, estimating the factor in the $\min$ in the conclusion of Lemma \ref{lem:gaussianfrombelow},
\[
	\min\{(g(s)/\sqrt{s - t})^{n-1}, 1\} \geq c
\]
if $s \leq t + g^2(t)$. Integrating over this range leads to
\begin{align*}
	\int_t^{t + g^2(t)}&\int_{|y| = g(s)} G(s - t, y - x) g'(s) dS(y) ds \\
	&= \int_t^{t + g^2(t)} J(g(s), g(t), s - t) g'(s) ds \\
	& \geq c \int_t^{t + g^2(t)}  \frac{g'(s)}{\sqrt{s - t}} ds.
\end{align*}
This is the main positive factor we were looking for in $\Itrans_t$, but we still must show that the negative integral over the same interval is not an issue.

This integral can be written explicitly:
\[
	\int_t^{t + g^2(t)}\int_{|y| = g(s)} G(s, y) g'(s) dS(y) ds = c_n \int_t^{t + g^2(t)} g^{n-1}(s) s^{-n/2}e^{-g^2(s)/4s} g'(s) ds.
\]
Since $g(s)/\sqrt{s}\to0$, for sufficiently small $t$ we have
\[
    c_n\left(\frac{g(s)}{\sqrt{s}}\right)^{n-1}
    \leq\varepsilon
    \qquad\text{for every }0<s\leq2t, \text{ so that}
\]
\[
	c_n g^{n-1}(s) s^{-n/2}e^{-g^2(s)/4s} g'(s) \leq \e g'(s) s^{-1/2}\qquad\text{for every }0<s\leq2t.
\]
As $s > s - t$ this can then be pointwise absorbed into the positive integrand if $\e $ is small enough, giving
\[
\int_t^{t + g^2(t)}\int_{|y| = g(s)} [G(s - t, y - g(t)) - G(s, y)] g'(s) dS(y) ds \geq c \int_t^{t + g^2(t)}  \frac{g'(s)}{\sqrt{s - t}} ds.
\]

We summarize this computation in the following lemma:
\begin{lemma}\label{lem:transfrombelow}
	Choose $L(t) = g^2(t)$. For $t$ sufficiently small,
	\[
		\Itrans_t \geq c\int_t^{t + L(t)}  \frac{g'(s)}{\sqrt{s - t}} ds.
	\]
\end{lemma}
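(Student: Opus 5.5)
The plan is to split $\Itrans_t$, with $L(t)=g^2(t)$, into its positive and negative parts,
\[
	\Itrans_t = \int_t^{t+g^2(t)} J(g(s), g(t), s-t)\, g'(s)\, ds - \int_t^{t+g^2(t)} \int_{|y|=g(s)} G(s,y)\, g'(s)\, dS(y)\, ds .
\]
I will bound the first term below pointwise in $s$ by $c\,g'(s)(s-t)^{-1/2}$, and show that the second is pointwise at most a small multiple of the same quantity. Since $g'\geq 0$ by \eqref{eq:a0}, both integrands are nonnegative, so comparing them pointwise is enough.

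\textbf{Step 1: lower bound on $J$.} I want to apply Lemma \ref{lem:gaussianfrombelow} with $R=g(s)$, $R'=g(t)$ and $\t=s-t$. Monotonicity gives $R\geq R'$, which is the orientation that lemma uses; the degenerate case $R=R'$ is harmless, since the proof only uses $|RY-R'e_n|^2=(R-R')^2+2RR'(1-Y_n)$. For the hypothesis $\frac{g(s)-g(t)}{\sqrt{s-t}}\leq \d$ with a fixed $\d<1$, I use the doubling estimate \eqref{eq:intdoubling} and the integrated form of Lemma \ref{lem:doubling}. For $t<s\leq t+g^2(t)\leq 2t$ these give
\[
	(g(s)-g(t))^2\leq g^2(s)-g^2(t)\leq C\int_t^s \frac{g^2(z)}{z}\,dz\leq C(s-t)\frac{g^2(t)}{t},
\]
and $g^2(t)/t\to 0$ by Lemma \ref{lem:KP}. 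Hence the quotient tends to $0$ uniformly for small $t$, and in particular is at most $\tfrac12$. Next, the factor $\min\{(g(s)/\sqrt{s-t})^{n-1},1\}$ is bounded below by a constant, because $s-t\leq g^2(t)\leq g^2(s)$. Lemma \ref{lem:gaussianfrombelow} then yields
\[
	J(g(s),g(t),s-t)\geq c\,(s-t)^{-1/2}.
\]

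\textbf{Step 2: the subtracted term.} By radial symmetry,
\[
	\int_{|y|=g(s)} G(s,y)\,dS(y) = c_n\, g^{n-1}(s)\, s^{-n/2} e^{-g^2(s)/4s} \leq c_n \left(\frac{g(s)}{\sqrt s}\right)^{n-1} s^{-1/2}.
\]
Since $g(s)/\sqrt s\to 0$ and $n\geq 2$, the prefactor $c_n (g(s)/\sqrt{s})^{n-1}$ is at most $\e$ for all $s\leq 2t$ once $t$ is small. Using $s^{-1/2}\leq (s-t)^{-1/2}$, the negative integrand is at most $\e\, g'(s)(s-t)^{-1/2}$. Choosing $\e\leq c/2$, where $c$ is the constant from Step 1, it absorbs into the positive integrand, and the lemma follows with constant $c/2$.

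\textbf{Main obstacle.} The delicate point is verifying the hypothesis of Lemma \ref{lem:gaussianfrombelow} uniformly in $s\in(t,t+g^2(t)]$, in particular near $s=t$, where $s-t\to 0$. This is handled by the bound $(g(s)-g(t))^2\leq C(s-t)g^2(t)/t$, which scales exactly like $s-t$, so the quotient stays bounded. The argument relies only on the differential doubling bound of Lemma \ref{lem:doubling} and the slower-than-parabolic melting of Lemma \ref{lem:KP}; everything else is explicit.
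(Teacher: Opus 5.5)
Your proposal is correct and follows the paper's argument essentially step for step. You verify the hypothesis of Lemma \ref{lem:gaussianfrombelow} via $(g(s)-g(t))^2\leq C(s-t)g^2(t)/t$, using Lemma \ref{lem:doubling} and \eqref{eq:intdoubling}, and you control the $\min$ factor through $s-t\leq g^2(t)\leq g^2(s)$. You then absorb the subtracted term pointwise, using $c_n g^{n-1}(s)s^{-n/2}e^{-g^2(s)/4s}\leq \e\, s^{-1/2}\leq \e\,(s-t)^{-1/2}$ together with $g'\geq 0$.
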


There is also a corresponding bound from above. It will be convenient to only give the bound on $J$ here, and then integrate in $s$ sharply later to get a more precise estimate.

\begin{lemma}\label{lem:transfromabove}
	For all $\alpha \in (0, 1)$, if $\alpha R \leq R' \leq R$, then
	\[
		J(R, R', \tau) \leq C_\alpha \min\{\t^{-1/2}, R^{n-1} \t^{-n/2}\}.
	\]
\end{lemma}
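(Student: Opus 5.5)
The bound $J\le C_\alpha R^{n-1}\t^{-n/2}$ is immediate: the integrand satisfies $G(\t, y-X)\le (4\pi)^{-n/2}\t^{-n/2}$, and the sphere $\{|y|=R\}$ has area $n|B_1|R^{n-1}$. So the real content is the bound $J\le C_\alpha \t^{-1/2}$, and for it we only need to consider the regime $R\ge \sqrt{\t}$. Indeed, if $R<\sqrt{\t}$ then $R^{n-1}\t^{-n/2}\le \t^{-1/2}$, so the first bound already gives the minimum.

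For the main bound, use the same expansion as in the proof of Lemma \ref{lem:gaussianfrombelow}, now from below:
\[
|RY-R'e_n|^2=(R-R')^2+2RR'(1-Y_n)\ge 2\alpha R^2(1-Y_n),
\]
where we used $R'\ge \alpha R$. Dropping the $(R-R')^2$ term gives
\[
J(R,R',\t)\le C\,\t^{-n/2}R^{n-1}\int_{S^{n-1}}e^{-\alpha R^2(1-Y_n)/(2\t)}\,dS(Y).
\]
Next parametrize the sphere by the height $y_n=Y_n\in[-1,1]$, with $dS=c\,(1-y_n^2)^{\frac{n-3}{2}}dy_n$, and substitute $w=1-y_n\in[0,2]$. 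Then $(1-y_n^2)^{\frac{n-3}{2}}=w^{\frac{n-3}{2}}(2-w)^{\frac{n-3}{2}}$.

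The one step needing care is the factor $(2-w)^{\frac{n-3}{2}}$, which is singular at $w=2$ when $n=2$. Split the integral at $w=1$. On $[0,1]$ this factor is bounded above and below, so that piece is at most
\[
C\int_0^\infty w^{\frac{n-3}{2}}e^{-\lambda w}\,dw=C\,\Gamma\!\left(\tfrac{n-1}{2}\right)\lambda^{-\frac{n-1}{2}},\qquad \lambda=\frac{\alpha R^2}{2\t}.
\]
On $[1,2]$ the exponential is at most $e^{-\lambda}$, and $(2-w)^{\frac{n-3}{2}}$ is integrable there since $n\ge2$. That piece is therefore at most $Ce^{-\lambda}\le C_n\lambda^{-\frac{n-1}{2}}$.

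Combining the two pieces,
\[
J\le C_\alpha\,\t^{-n/2}R^{n-1}\left(\frac{\t}{R^2}\right)^{\frac{n-1}{2}}=C_\alpha\,\t^{-1/2}.
\]
This holds for every $R$, so together with the trivial bound we get $J\le C_\alpha\min\{\t^{-1/2},R^{n-1}\t^{-n/2}\}$. The constant $C_\alpha$ depends on $\alpha$ only through the factor $\alpha^{-\frac{n-1}{2}}$ coming from $\lambda$.
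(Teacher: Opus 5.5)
Your proof is correct and follows essentially the paper's argument: the trivial bound $J\le CR^{n-1}\tau^{-n/2}$, the identity $|RY-R'e_n|^2=(R-R')^2+2RR'(1-Y_n)$, and reduction to a one-dimensional integral in the height variable that produces the factor $(\tau/R^2)^{\frac{n-1}{2}}$. The differences are cosmetic. The paper keeps $RR'$ in the exponent, obtains $C\tau^{-1/2}(R/R')^{\frac{n-1}{2}}$, and handles $n=2$ through the angle substitution $1-\cos\theta\ge 2\theta^2/\pi^2$. You instead insert $R'\ge\alpha R$ at the start and split the integral at $w=1$, which handles all $n\ge 2$ at once.
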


\begin{proof}
	We proceed as in Lemma \ref{lem:gaussianfrombelow}. In this case it is somewhat simpler as we can drop some of the exponential factors (indeed, $J(R, R', \t) \leq C R^{n-1} \t^{-n/2}$ is trivial). We can potentially do better by still writing $|RY - R'e_n|^2 = (R - R')^2 + 2 R R' (1 - Y_n)$ and so 
	\[
		J(R, R', \t) = c(n)\int_{S^{n-1}} \t^{-n/2} e^{-\frac{|RY - R'e_n|^2}{4\t}}  R^{n-1} dS(Y) \leq C \t^{-n/2} R^{n-1} \int_{S^{n-1}} e^{- \frac{R R' (1 - Y_n)}{2 \t}} dS(Y).
	\]
	The integral can be computed explicitly
	\begin{align*}
		\int_{S^{n-1}} e^{- a (1 - Y_n)} dS(Y) &= C\int_{-1}^1 (1 - Y_n^2)^{\frac{n - 3}{2}} e^{- a (1 - Y_n)} dY_n \\
		&= Ca^{- \frac{n - 1}{2}}\int_{0}^{2a} e^{-z} z^{\frac{n-3}{2}}(2 - z/a)^{\frac{n-3}{2}} dz \\
		&\leq C a^{- \frac{n - 1}{2}}\int_{0}^{\infty} e^{-z} z^{\frac{n-3}{2}}2^{\frac{n-3}{2}} dz \\
		& = Ca^{- \frac{n - 1}{2}}
	\end{align*}
	when $n \geq 3$. When $n = 2$ this is still valid:
	\begin{align*}
		\int_{-1}^1 (1 - Y_n^2)^{-\frac{1}{2}} e^{- a (1 - Y_n)} dY_n &= \int_{0}^\pi e^{-a(1 - \cos \theta)} d \theta \\
		 \leq \int_0^\pi e^{- a 2 \theta^2/\pi^2} d\theta \leq C a^{-1/2}.
	\end{align*}
	Plugging this in for $J$,
	\[
		J(R, R', \t) \leq C \t^{-1/2} (R/R')^{\frac{n-1}{2}}.
	\]
\end{proof}

\subsection{Estimating the integrals: the past}

We begin by observing that we chose $L(t) = g^2(t)$ in order to get the explicit lower bound in Lemma \ref{lem:transfrombelow}. However, the region over which the combined integrand $\int_{|y| = g(s)} [G(s - t, y - x) - G(s, y)] dS(y)$ remains nonnegative is somewhat larger. For the negative part we always have
\[
	c_n g^{n-1}(s) s^{-n/2}e^{-g^2(s)/4s} \leq c_n  g^{n-1}(s) s^{-n/2} (\leq \e s^{-1/2}),
\]
while for the positive part we still retain
\[
	J(g(s), g(t), s - t) \geq c (s - t)^{-1/2} \min\{(g(s)/\sqrt{s-t})^{n-1}, 1\}.
\]
If the $\min$ is $1$, then using the extra inequality in parentheses means the difference is nonnegative. Even if not, as long as $s \leq t(1 + \eta)$ with $\eta$ small, we can instead use that
\[
	s^{-n/2} \leq (s - t)^{-n/2} \eta^{n/2}.
\]
For a small fixed $\eta$, this also gives that the difference is nonnegative, and so
\begin{equation}\label{eq:pastclose}
	\int_{t + L(t)}^{t(1 + \eta)}\int_{|y| = g(s)} [G(s - t, y - g(t)) - G(s, y)] g'(s) dS(y) ds \geq 0.
\end{equation}
Note that for small $t$, $L(t) = g^2(t) \ll \eta t$, so this covers a relatively large interval. For $n = 2$ we will need to retain the remainder there as well. After reabsorbing,
\begin{equation}\label{eq:pastclose2D}
\begin{aligned}
&\int_{t + L(t)}^{t(1 + \eta)}
 \int_{|y| = g(s)}
 [G(s - t, y - g(t)) - G(s, y)] g'(s)\,dS(y)\,ds \\
&\qquad \geq c \int_{t + L(t)}^{t(1 + \eta)}
 J(g(s), g(t), s - t) g'(s)\,ds  \geq c \int_{t + L(t)}^{t(1 + \eta)}
 \frac{g'(s)g(s)}{s - t}\,ds.
\end{aligned}
\end{equation}
In the last step, we use the lower bound on $J$ and the fact that $g^2(s) \leq C g^2(t) \leq C (s - t) $ from \eqref{eq:intdoubling}, so $1 \geq c g(s)/\sqrt{s - t}$.

The main goal of this section is to estimate the remainder of the integral, over $[t(1 + \eta), 1]$. We begin with a more careful estimate on the kernels, which shows that the positivity we just observed extends to some fixed time, independent of $t$. There is also an upper bound which we prove at the same time.

\begin{lemma}\label{lem:farfieldest} For every $0< \eta \leq 1$, and for all $t, s$ with $( 1 + \eta)t \leq  s \leq 1$,
	\[
		\left|\int_{|y| = g(s)} G(s - t, y - g(t)) - G(s, y) dS(y)\right| \leq C \frac{t}{s} g^{n-1}(s) s^{-n/2}.
	\]
	The constant $C$ depends on $\eta$ and $u$. Moreover, there exists a $T(\eta)$ (depending on $u$) such that for $( 1 + \eta)t \leq  s \leq T(\eta)$, this quantity is in fact nonnegative:
	\[
		0 \leq \int_{|y| = g(s)} G(s - t, y - g(t)) - G(s, y) dS(y) \leq C(\eta) \frac{t}{s} g^{n-1}(s) s^{-n/2}
	\]
\end{lemma}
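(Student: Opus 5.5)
The plan is to write the difference of the two spherical integrals explicitly through the radial Gaussian kernel and to compare the two heat kernels with a first-order Taylor expansion in the parameter $t$. Put $R = g(s)$, $R' = g(t)$, $\tau = s - t$. The first term is $J(R, R', \tau)$ and the second is $J(R, 0, s) = c_n R^{n-1} s^{-n/2} e^{-R^2/4s}$. Because $s \geq (1+\eta)t$, we have $\tau \geq \frac{\eta}{1+\eta} s$, so both kernels are evaluated at times comparable to $s$. Monotonicity of $g$ gives $R' \leq R$, and by \eqref{eq:a0} we have $R^2/s \leq A_0^2$ and $R^2/\tau \leq C(\eta) A_0^2$. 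Both Gaussians are therefore of order one, and neither integral has a pole.

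For the upper bound, write
\[
	G(\tau, y - X) - G(s, y) = [G(s - t, y - X) - G(s - t, y)] + [G(s - t, y) - G(s, y)],
\]
where $X = g(t)e_n$. For the second bracket I use the mean value theorem in time: $|\partial_\sigma G(\sigma, y)| \leq C \sigma^{-n/2 - 1}(1 + |y|^2/\sigma)$ with $\sigma \in [\tau, s]$, which is bounded by $C(\eta) t\, s^{-n/2-1}$. Integrating over $|y| = R$ gives $C t s^{-1} R^{n-1} s^{-n/2}$. For the first bracket I expand in $X$. The linear term $\nabla G \cdot X$ integrates to zero over the sphere by symmetry, so the difference is controlled by $|X|^2 \sup |D^2 G| \leq C g^2(t) \tau^{-n/2 - 1}$. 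After integration this gives $C g^2(t) s^{-1} R^{n-1} s^{-n/2} \leq C A_0^2 (t/s) R^{n-1} s^{-n/2}$. Here I used $g^2(t) \leq A_0^2 t$. This step is precisely where the factor $t/s$ comes from, and it is also where the radial symmetry (the cancellation of the odd term) is used.

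For the sign, I compute the leading terms rather than bounding them. Define $F(\sigma, \rho) := \int_{|y|=R} G(\sigma, y - \rho e_n)\,dS(y)$, the spherical mean of the caloric function $G$. As a function of $\rho$ it is radial and solves the heat equation in $(\sigma, \rho e_n)$. The first bracket is therefore approximately $\frac{g^2(t)}{2n} \Delta_x F = \frac{g^2(t)}{2n}\partial_\sigma F$ at $\sigma \approx s$. The time difference is approximately $-t\,\partial_\sigma F$. So the total is approximately
\[
	-\left(t - \frac{g^2(t)}{2n}\right)\partial_\sigma F(s, 0),
\]
up to errors of order $(t^2/s^2 + g^4(t)/s^2) R^{n-1}s^{-n/2}$. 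Now $F(\sigma, 0) = c_n R^{n-1}\sigma^{-n/2}e^{-R^2/4\sigma}$, so $-\partial_\sigma F(s, 0) = c_n R^{n-1} s^{-n/2-1}\left(\frac n2 - \frac{R^2}{4s}\right)$. This is positive, of size $\sim R^{n-1}s^{-n/2-1}$, once $R^2/s$ is small. Since $g^2(t) \ll t$, the prefactor $t - g^2(t)/2n$ is comparable to $t$. Taylor's expansion, however, only produces errors of relative size $t/s$. These are not small compared with the main term when $s \sim (1+\eta)t$.

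The \textbf{main obstacle} is exactly this sign for $s$ near $(1+\eta)t$. To handle it, I would not Taylor expand in time at all. Instead I would directly compare $G(\tau, y - X)$ with $G(s, y)$ on $|y| = R$. On that sphere, $|y - X|^2 \leq (R + R')^2 \leq 4R^2$, so
\[
	G(\tau, y - X) \geq (4\pi)^{-n/2}\tau^{-n/2} e^{-R^2/\tau}.
\]
The ratio of this lower bound to $G(s, y)$ is $(s/\tau)^{n/2} \exp\left(-R^2/\tau + R^2/4s\right)$. Since $R^2/\tau \leq C(\eta) g^2(s)/s \to 0$ by Lemma \ref{lem:KP}, this ratio is at least $(1+\eta^{-1})^{-n/2}\cdot$ hmm; more precisely it is at least $(s/(s-t))^{n/2}(1 - o(1))$. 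This exceeds $1 + c\,t/s$ once $s \leq T(\eta)$ is small enough that the $o(1)$, which is bounded by $C(\eta)g^2(s)/s$, is dominated by $c(\eta)\,t/s$. For that I need $g^2(s)/s \ll t/s$, i.e. $g^2(s) \ll t$. This holds for $s$ up to a fixed multiple of $t$ by \eqref{eq:intdoubling} together with $g^2(t) = o(t)$. For larger $s$, the Taylor argument above applies: there $t/s$ is small, so its errors are of lower order. Gluing the two regimes yields nonnegativity for $(1+\eta)t \leq s \leq T(\eta)$, with the constant depending on $u$ through Lemma \ref{lem:doubling}.
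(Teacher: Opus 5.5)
Your proof is correct, but it takes a different route from the paper's.

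\textbf{The paper's route.} The paper works with the exact ratio of the two kernels. Expanding $|g(s)y-g(t)e_n|^2=g^2(s)+g^2(t)-2g(s)g(t)y_n$, it factors the ratio into two pieces:
\begin{itemize}
\item a spherical mean of $e^{g(s)g(t)y_n/(2(s-t))}$, which lies between $1$ and $1+C(\eta)t/s$ by Jensen and $e^z\le 1+z+z^2$;
\item an explicit factor $R_*$ with $\log R_*=\frac n2\log\bigl(1+\frac{t}{s-t}\bigr)-\frac{g^2(s)t}{4s(s-t)}-\frac{g^2(t)}{4(s-t)}$.
\end{itemize}
The point is that the $g^2(s)$ terms in the two exponents cancel up to a residual that is itself proportional to $t/(s-t)$. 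The gain $\frac n2\log(1+\frac{t}{s-t})$ therefore dominates it uniformly on $[(1+\eta)t,T]$ once $g^2(s)/s$ and $g^2(t)/t$ are small. Sign and upper bound come out in one pass, without splitting regimes and without Lemma \ref{lem:doubling}. Only $[T,1]$ needs a separate Taylor argument, with a constant depending on $T$ and hence on $u$.

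\textbf{What your route buys.} Your decomposition into a spatial shift (odd term cancels) plus a temporal shift gives the upper bound on all of $[(1+\eta)t,1]$ at once. Its constant depends only on $n,\eta,A_0$, which is cleaner than the paper's treatment. Your expansion also identifies the true leading term $\bigl(t-\frac{g^2(t)}{2n}\bigr)\bigl(-\partial_\sigma F(s,0)\bigr)$, which explains where the sign comes from.

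\textbf{What it costs.} The pointwise bound $|y-X|\le 2R$ discards exactly the exponent cancellation the paper exploits. So your sign argument only works while $g^2(s)\ll t$, which forces the split at $s=Mt$. The split is sound, but you should state the order of choices. First fix $M=M(n,\eta)$ so that the Taylor remainders are beaten by the main term for $s\ge Mt$. By $|\partial_\sigma^jD_x^kG(\sigma,z)|\le C\sigma^{-n/2-j-k/2}$, these remainders have relative size $C\bigl(\frac ts+\frac{g^2(t)}{s}+\frac{g^4(t)}{ts}\bigr)$. Only then choose $T(\eta)$.

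Lemma \ref{lem:doubling} is also unnecessary in your near regime. For fixed $M$, monotonicity and Lemma \ref{lem:KP} give $g^2(s)\le g^2(Mt)=o(t)$ directly. Finally, delete the stray ``hmm'' sentence.
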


\begin{proof}
	We proceed by estimating the heat kernels directly, in the region $s \in [(1+\eta)t, T]$:
 	\begin{align*}
		\int_{|y| = g(s)} &[G(s - t, y - g(t)) - G(s, y)] dS(y) \\
 		&= c(n) g^{n-1}(s) \int_{S^{n-1}} \left(e^{[-g^2(s)-g^2(t) + 2 g(s)g(t)y_n]/(4(s-t))} (s - t)^{-n/2} - e^{-g^2(s)/(4s)}s^{-n/2} \right)dS(y).
 	\end{align*}
	Consider the ratio
 	\begin{align*}
 		R &= \left(\frac{s - t}{s}\right)^{-n/2} \frac{1}{|S^{n-1}|e^{-g^2(s)/(4s)}}\int_{S^{n-1}} e^{[-g^2(s)-g^2(t) + 2 g(s)g(t)y_n]/(4(s-t))}dS(y) \\
		&= \left(\frac{s - t}{s}\right)^{-n/2} \text{exp}\left(- \frac{g^2(s) t }{4s(s - t)} - \frac{g^2(t)}{4(s - t)}\right)\fint_{S^{n-1}} e^{g(s)g(t)y_n/(2(s-t))} dS(y) \\
		&= R_* \fint_{S^{n-1}} e^{g(s)g(t)y_n/(2(s-t))} dS(y),
	\end{align*}
	where $R_*$ is defined via the last line. Our goal is to show that $1 \leq R \leq 1 + C \frac{t}{s}$. We will use repeatedly that $s \geq (1 + \eta) t$, so $0 \leq \frac{t}{s} \leq 1$ and $\frac{t}{s - t} \leq \frac{1}{\eta} = C(\eta)$, $1 \leq \frac{s}{s - t} \leq 1 + \frac{1}{\eta} \leq C(\eta)$. 
	
	For the mean over the sphere, $R/R_*$, the inequality $e^a \geq 1 + a$ and the fact that $y_n$ is odd imply that it is greater than $1$:
	\[
		\fint_{S^{n-1}} e^{g(s)g(t)y_n/(2(s-t))} dS(y) \geq \fint_{S^{n-1}} \left(1 + g(s)g(t)y_n/(2(s-t)) \right) dS(y) = 1.
	\]
	In the opposite direction we estimate
	\[
		\frac{g(s)g(t)|y_n|}{2(s-t)} \leq C(\eta)A_0 \frac{g(t)}{\sqrt{t}} |y_n| (t/s)^{1/2} \> \left(\leq C(\eta)A_0 \frac{g(t)}{\sqrt{t}}\right),
	\]
	using \eqref{eq:a0}, with $A_0$ the constant from there. As long as $T$ is small (in terms of $\eta$ and $u$ only, $A_0$ is fixed and depends only on $n$), we can ensure that the expression in parentheses is $\leq 1$, and so  $z \leq C(\eta)A_0 \frac{g(t)}{\sqrt{t}}$ implies $e^z \leq 1 + z + z^2$. Then estimating the integral,
	\begin{align*}
		\fint_{S^{n-1}}e^{g(s)g(t)y_n/(2(s-t))} dS(y) &\leq \fint_{S^{n-1}} 1 + \frac{g(s)g(t)y_n}{2(s-t)} + \left(\frac{g(s)g(t)y_n}{2(s-t)}\right)^2  dS(y)\\
		&\leq 1 + A_0^4 C(\eta) \frac{t}{s}\fint_{S^{n-1}} y_n^2  dS(y) \leq 1 + C(\eta)\frac{t}{s}.
	\end{align*}
	We once again used that $y_n$ is odd when integrating the linear term.

	It remains to show that $1 \leq  R_* \leq 1 + C \frac{t}{s}$. To do this, we take the logarithm:
	\[
		\log R_* = \frac{n}{2}\log\left(1 + \frac{t}{s - t}\right) - \frac{g^2(s) t }{4s(s - t)} - \frac{g^2(t)}{4(s - t)}.
	\]
 	We start with the lower bound. In the second term, we have $g^2(s)/s \leq \e^2$, where $\e$ may be made small by choosing $T$ small in terms of $\eta$:
 	\[
 		- \frac{g^2(s) t }{4s(s - t)} \geq - \e^2 \frac{1}{4}\frac{t}{s - t}.
 	\]
 	In the third term, we may similarly use that $g^2(t)/t \leq \e^2$, and so
 	\[
 		-\frac{g^2(t)}{4(s - t)} \geq - \e^2 \frac{1}{4} \frac{t}{s - t}.
 	\]
	This leads to
	\[
		\log R_* \geq \frac{n}{2}\log\left(1 + \frac{t}{s - t}\right) - \frac{\e^2}{2}  \frac{t}{s - t} = \frac{n}{2}\left[\log(1 + z) - \frac{\e^2}{n} z\right],
	\]
	where $z = \frac{t}{s - t} \in [0, \frac{1}{\eta}]$. It is clear that choosing $\e = \e(\eta)$ small, we can make this positive on $[0, \frac{1}{\eta}]$.

	For the upper bound, simply use $\log (1 + z) \leq z$, dropping all the negative terms:
	\[
		\log R_* \leq \frac{n}{2} \frac{t}{s - t} \leq C(\eta) \frac{t}{s}.
	\]
	Exponentiating and using, as above, that $t/s \leq C(\eta)$ so $e^{C t/s} \leq 1 + C(\eta)\frac{t}{s}$,
	\[
		1 \leq R_* \leq 1 + C(\eta) \frac{t}{s}.
	\]
	
	To finish the proof of the lemma, plug in the estimate on $R$ into the integral to get
	\begin{align*}
		0 &\leq \int_{|y| = g(s)} G(s - t, y - g(t)) - G(s, y) dS(y)\\
		& \leq C \int_{|y| = g(s)} \frac{t}{s} G(s, y) dS(y) \\
		&\leq C \frac{t}{s} s^{-n/2} e^{-g^2(s)/4s} g^{n-1}(s).
	\end{align*}
	The conclusion over $s \in [(1 + \eta)t, T]$ follows by dropping the exponential factor on the right, as it is $\leq 1$. 

	Over $[\eta T/(1+\eta), 1] \times B_{2A_0}$, $G$ is a smooth function. Expanding in a Taylor series, $G(s - t, y - g(t)) = G(s, y) - tG_t(s, y) - g(t) e_n \cdot \nabla G(s, y) + O(t^2 + g^2(t))$. As $G_n(s, y) = -G_n(s, -y)$, the integral over the sphere of $G_n(s, y)$ is $0$, letting us drop this term in the expansion. This leads to
	\[
		\left|\int_{|y| = g(s)} [G(s - t, y - g(t)) - G(s, y)] dS(y)\right| \leq C(n,\eta,T) \> (t + g^2(t)) g^{n-1}(s).
	\]
	Using $g^2(t) \leq A_0 t$ and $s \geq T$ completes the bound over this region.
\end{proof}

The estimate from below now follows easily.
\begin{theorem}\label{thm:past} For all $t$ small,
	$\Im_t \geq - C t.$
	If $n = 2$, then in fact
	\[
		\Im_t \geq - C t + c \int_{t + L(t)}^{t(1 + \eta)} \frac{g'(s) g(s)}{s - t} ds.
	\]
\end{theorem}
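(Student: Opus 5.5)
We split $\Im_t$ into three pieces, $s\in[t+L(t),t(1+\eta)]$, $s\in[t(1+\eta),T(\eta)]$ and $s\in[T(\eta),1]$, with $\eta$ a fixed small constant as in \eqref{eq:pastclose}. On the first piece the combined integrand is nonnegative by \eqref{eq:pastclose}. So this piece contributes $\geq 0$ for general $n$, and for $n=2$ it contributes $\geq c\int_{t+L(t)}^{t(1+\eta)}\frac{g'(s)g(s)}{s-t}\,ds$ by \eqref{eq:pastclose2D}. On the second piece Lemma \ref{lem:farfieldest} gives a nonnegative integrand, so it can be dropped. Only the far piece $[T(\eta),1]$ can contribute something negative, and we must show it is $\geq -Ct$.

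On $[T(\eta),1]$ we use the two-sided bound of Lemma \ref{lem:farfieldest}:
\[
\left|\int_{|y|=g(s)}[G(s-t,y-g(t))-G(s,y)]\,dS(y)\right|\leq C\frac{t}{s}g^{n-1}(s)s^{-n/2}.
\]
For $s\geq T(\eta)$ the factor $s^{-1-n/2}$ is bounded by a constant depending on $\eta$ and $u$. Hence the far piece is bounded in absolute value by
\[
Ct\int_{T}^1 g^{n-1}(s)g'(s)\,ds\leq Ct\,g^n(1)/n.
\]
Here we used $g'\geq0$ from \eqref{eq:a0}, so $g^{n-1}g'$ is a nonnegative integrable density. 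This gives $\geq -Ct$, with $C$ depending on $u$ (through $T(\eta)$ and $g(1)$) and on the fixed $\eta$.

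Combining the three pieces gives $\Im_t\geq -Ct$, plus the extra positive term when $n=2$. The requirement ``$t$ small'' comes in three places: Lemma \ref{lem:farfieldest} needs $(1+\eta)t\leq T(\eta)$; \eqref{eq:pastclose} needs $L(t)=g^2(t)\ll\eta t$, so that the first interval is nonempty and the positivity argument applies; and \eqref{eq:pastclose2D} needs the estimates valid for small $t$.

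The only real point to check is that Lemma \ref{lem:farfieldest} covers the full range $[(1+\eta)t,1]$ for the upper bound, and in particular the overlap near $T(\eta)$. The proof of that lemma handles this by a Taylor expansion on $[\eta T/(1+\eta),1]$, so the claim follows directly. Everything else is bookkeeping.
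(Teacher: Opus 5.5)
Your proposal is correct and follows the paper's proof essentially step for step. You use the same split of $[t+L(t),1]$ at $t(1+\eta)$ and $T(\eta)$. On the first piece you get nonnegativity from \eqref{eq:pastclose}, replaced by \eqref{eq:pastclose2D} when $n=2$, and on the second from Lemma \ref{lem:farfieldest}. On $[T(\eta),1]$ you integrate the two-sided far-field bound against $g^{n-1}g'\,ds$ to obtain $Ct$.
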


\begin{proof}
	Start with \eqref{eq:pastclose}, to get that the contribution to $\Im_t$ from $[t + L(t), (1 + \eta)t]$ is nonnegative for some $\eta$. Then apply Lemma \ref{lem:farfieldest} with this $\eta$, which gives that the contribution over $[(1 + \eta)t, T(\eta)]$ is nonnegative as well. Finally, integrate the estimate from Lemma \ref{lem:farfieldest} to bound the contribution over $[T(\eta), 1]$, which is the only part that appears in the final estimate:
 	\[
 		\int_T^1 g^{n-1}(s) g'(s) ds = \frac{1}{n}[g^n(1) - g^n(T)] \leq A_0^n,
 	\]
	so
	\[
		\int_{T}^1 \left|\int_{|y| = g(s)} [G(s - t, y - g(t)) - G(s, y)] g'(s) dS(y)\right| ds \leq C t.
	\]
	For the 2D estimate, use \eqref{eq:pastclose2D} instead as the only change.
\end{proof}

\subsection{Estimating the integrals: the future}

The final piece to estimate, $\Ip_t$, is in fact by far the easiest. It has only one of the integrals supported here, so is always negative, and we are content to almost leave it alone. The sole observation we make is that $\frac{1}{2} \leq e^{-g^2(s)/4s} \leq 1$ on $[0, t]$ if $t$ is small, so if we define
\[
	A(t) = \int_0^t g^{n-1}(s) g'(s) s^{-n/2}ds,
\]
then
\[
	- c A(t) \geq \Ip_t \geq - C A(t).
\]
We note here that as $K(0, 0)$ is absolutely convergent, so is $A(t)$, and we will use this below without further mention.

We now assemble the lower bounds over the three regions into more usable differential inequalities.
\begin{theorem}\label{thm:fullbelow} For all $t$ small,
	\[
	\frac{g^2(t)}{t} \leq C A(t).
	\]
	If $n = 2$, the stronger inequality
	\[
	\frac{g^2(t)}{t}\log \frac{t}{g^2(t)}\leq C A(t)
	\]
	holds as well.
\end{theorem}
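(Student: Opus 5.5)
We combine the three lower bounds with the smallness of $I_t$. Since $u(-t,g(t))=0$ and $\ufree$ is caloric, hence smooth and radial near $(0,0)$, the representation formula gives $|I_t|\le C(t+g^2(t))\le Ct$, using \eqref{eq:a0}. Writing $I_t=\Ip_t+\Itrans_t+\Im_t$ and applying Lemma \ref{lem:transfrombelow}, Theorem \ref{thm:past} and the bound $\Ip_t\ge -CA(t)$, we obtain
\[
	c\int_t^{t+g^2(t)}\frac{g'(s)}{\sqrt{s-t}}\,ds \le Ct + C A(t),
\]
and for $n=2$ the extra term $c\int_{t+g^2(t)}^{t(1+\eta)} \frac{g'(s)g(s)}{s-t}\,ds$ also appears on the left. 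Two things remain: bound the left side from below by $g^2(t)/t$ (times $\log(t/g^2(t))$ when $n=2$), and absorb the $Ct$ term.

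For the transition integral, use $g'\ge0$ and $\sqrt{s-t}\le g(t)$ on the range to get
\[
	\int_t^{t+g^2}\frac{g'(s)}{\sqrt{s-t}}\,ds \ge \frac{g(t+g^2(t))-g(t)}{g(t)}.
\]
A lower bound on the increment of $g$ is needed, and I would take it from the nondegeneracy $g'(s)\ge c\,g^{-1/3}(s)$. With $g(s)\le Cg(t)$ by \eqref{eq:intdoubling}, the increment is at least $c\,g^{2-1/3}(t)$, so the integral is at least $c\,g^{2/3}(t)$. Since $g(t)/\sqrt t\to0$, we have $g^{2/3}\gg g^2/t$ for small $t$, so this more than suffices. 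The doubling bound $g'\le Cg/t$ is not needed at this step.

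For $n=2$ we use the extra term, with $g(s)\ge g(t)$ on $[t+g^2,\,t(1+\eta)]$. One route is a dyadic decomposition in $s-t$ from $g^2$ up to $\eta t$: on each annulus $s-t\in[2^k g^2,2^{k+1}g^2]$, the increment of $g$ is bounded below through the nondegeneracy again, yielding $\sum_k 2^{-k}g^{-2}\cdot g\cdot c\,g^{-1/3}2^kg^2=c\,g^{2/3}\log(t/g^2)$. This is far more than $(g^2/t)\log(t/g^2)$, since $g^{2/3}$ dominates $g^2/t$.

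Finally, the $Ct$ term is absorbed. We have shown that the left side is at least $c\,g^{2/3}(t)$ (times the log for $n=2$). By \eqref{eq:strictlymonotone}, $g(t)\gg t$, so $g^{2/3}\gg t^{2/3}\gg t$, and $Ct$ can be absorbed for small $t$. Hence $c\,g^{2/3}\le CA(t)$, which implies the claim because $g^2/t\le g^{2/3}$ for small $t$. The main obstacle is the lower bound on the transition integral, i.e.\ converting the pointwise nondegeneracy of $g'$ into a bound on the increment that dominates both $g^2/t$ and $t$; this must be done uniformly using the doubling bound \eqref{eq:intdoubling}. If the nondegeneracy route failed, I would fall back on the representation-free estimate $A(t)\gtrsim$ the contribution from $[t/2,t]$, which is $\gtrsim g^n/t^{n/2}$, and compare it directly.
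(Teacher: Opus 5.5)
Your reduction to
\[
c\int_t^{t+g^2(t)}\frac{g'(s)}{\sqrt{s-t}}\,ds\le Ct+CA(t)
\]
matches the paper. Your lower bound $c\,g^{2/3}(t)$ for the left side via $g'\geq c\,g^{-1/3}$ is correct, and so is the absorption of $Ct$.

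\textbf{The gap.} The argument fails at the last comparison. The inequality $g^2(t)/t\le g^{2/3}(t)$ is equivalent to $g(t)\le t^{3/4}$, and this does not follow from $g(t)/\sqrt t\to0$: writing $g=\varepsilon\sqrt t$, the ratio $g^{2/3}/(g^2/t)=\varepsilon^{-4/3}t^{1/3}$ is large only if $\varepsilon\ll t^{1/4}$. It is in fact false. By Theorem \ref{thm:main}, $g\gg t^{3/4}$ in every dimension, so $g^{2/3}\ll g^2/t$, and what you have actually proved, $g^{2/3}\le CA$, is far weaker than the claim.

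\textbf{The other routes have the same defect.} The $n=2$ dyadic argument only yields $g^{2/3}\log(t/g^2)$. The fallback does not help either. Even granting $A\gtrsim g^n/t^{n/2}$ (which already needs a lower doubling bound $g(t/2)\le(1-c)g(t)$ that you do not have), this equals $(g^2/t)^{n/2}\ll g^2/t$ for $n\geq3$, and it lacks the logarithm for $n=2$.

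\textbf{The root problem.} The transition integral is pointwise of size about $g(t)g'(t)$. A pointwise lower bound of order $g^2/t$ would therefore require $g'(t)\gtrsim g(t)/t$. That is essentially the true rate, and it is far beyond what the barrier in Lemma \ref{lem:strictlymonotone} provides.

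\textbf{The missing idea: time averaging.} Integrate the inequality in $\tau$ over $[0,t]$ and exchange the order of integration. By Lemma \ref{lem:doubling}, every $\tau\in[s-\tfrac12g^2(s),s]$ satisfies $\tau+g^2(\tau)\geq s$, so
\[
\int_0^t\int_\tau^{\tau+g^2(\tau)}\frac{g'(s)}{\sqrt{s-\tau}}\,ds\,d\tau\geq c\int_0^t g(s)g'(s)\,ds=\frac c2\,g^2(t).
\]
Meanwhile, monotonicity of $A$ gives $\int_0^tA(\tau)\,d\tau\le tA(t)$. Together these yield $g^2(t)\le CtA(t)+Ct^2$. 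Only at this point is nondegeneracy used, in the weak form $g(t)/t\to\infty$, to absorb $Ct^2$.

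For $n=2$, apply the same averaging to $\int_{t+g^2(t)}^{(1+\eta)t}\frac{g(s)g'(s)}{s-t}\,ds$. This gives $c\int_0^t gg'\log\frac{s}{g^2(s)}\,ds$. An integration by parts turns this into $\tfrac12g^2(t)\log\frac{t}{g^2(t)}$ minus $\tfrac12\int_0^t\frac{g^2}{s}\,ds$, and the remainder satisfies
\[
\tfrac12\int_0^t\frac{g^2}{s}\,ds\le t\int_0^t\frac{gg'}{s}\,ds=tA(t).
\]
Averaging converts the transition integral into the exact primitive of $gg'$, so no pointwise lower bound on $g'$ is ever needed.
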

\begin{proof}
	Recall that $|I_t|\leq C(t + g^2(t)) \leq C t$ after using that $g^2(t)/t \rightarrow 0$. Now combine with Lemma \ref{lem:transfrombelow} and Theorem \ref{thm:past} to get
	\[
	C t \geq I_t = \Ip_t + \Itrans_t + \Im_t \geq c \int_t^{t + g^2(t)}  \frac{g'(s)}{\sqrt{s - t}} ds - C t  - C A(t).
	\]
	Rearrange and replace $t$ by $\t$ to get
	\[
	\int_\t^{\t + g^2(\t)}  \frac{g'(s)}{\sqrt{s - \t}} ds \leq C [A(\t)  + \t].
	\]

	We integrate both sides in the $\t$ variable, over $[0, t]$. On the left, switch order of integration:
	\[
		\int_0^t \int_\t^{\t + g^2(\t)}  \frac{g'(s)}{\sqrt{s - \t}} ds d \tau  \geq \int_0^t \int^{s}_{m(s)}  \frac{g'(s)}{\sqrt{s - \t}} d\tau d s,
	\]
	where $m(s) + g^2(m(s)) = s$ is an inverse to the monotone function $s \rightarrow s + g^2(s)$ and we dropped any contributions with $s \geq t$. If $\tau \geq s - \frac{1}{2} g^2(s)$, use Lemma \ref{lem:doubling} to get
	\[
		g^2(s) - g^2(\tau) \leq \e (s - \t) \leq \frac{\e}{2} g^2(s)
	\]
	as long as $s \leq t$ is small. This leads to
	\[
		\t + g^2(\t) \geq s + g^2(\t) - \frac{1}{2} g^2(s) \geq s + (1 - \frac{\e}{2} - \frac{1}{2}) g^2(s) \geq s.
	\]
	In other words, $\t + g^2(\t) \geq s$, so $\t \geq m(s)$. Restricting the region of integration lets us compute the inner integral:
	\[
		\int^{s}_{m(s)}  \frac{1}{\sqrt{s - \t}} d\tau \geq \int^{s}_{s - \frac{1}{2}g^2(s)}  \frac{1}{\sqrt{s - \t}} d\tau \geq c g(s).
	\]
	The outer integral may be integrated explicitly to then get
	\[
		\int_0^t \int_\t^{\t + g^2(\t)}  \frac{g'(s)}{\sqrt{s - \t}} ds d \tau \geq c \int_0^t g(s) g'(s) ds \geq c g^2(t).
	\]
	We conclude that
	\[
		g^2(t) \leq C \int_0^t\int_0^\t g^{n-1}(s) g'(s) s^{-n/2}ds d\t  + C t^2 \leq Ct \int_0^t g^{n-1}(s) g'(s) s^{-n/2}ds  + C t^2,
	\]
	as the outer integrand on the right is monotone. This suffices for $n \geq 3$, after using \eqref{eq:strictlymonotone} to reabsorb the remainder when $t$ is small.

	When $n = 2$ we proceed in the same way, except we use the 2D form of Theorem \ref{thm:past} to get
	\[
		C t \geq c \int_{t + g^2(t)}^{(1 + \eta) t}  \frac{g(s) g'(s)}{s - t} ds - C t  - C \int_0^t g^{n-1}(s) g'(s) s^{-n/2}ds,
	\]
	dropping the positive term from $\Itrans_t$ in favor of the one from $\Im_t$. Now proceed in essentially the same way, except now integrating over $\t \in [\frac{1}{1 + \eta}s, s - g^2(s)] \ss [\frac{1}{1+\eta} s, m(s)]$. The containment holds because $s = m(s) + g^2(m(s)) \leq m(s) + g^2(s)$. We get
	\begin{align*}
		\int_0^t \int_{\t + g^2(\t)}^{(1 + \eta)\t}  \frac{g(s) g'(s)}{s - \t} ds d \tau  &\geq \int_0^t \int^{s - g^2(s)}_{s/(1 + \eta)}  \frac{g(s) g'(s)}{s - \t} d\tau d s \\
		& \geq \int_0^t  g(s) g'(s) \log \left(\frac{\eta s}{(1 + \eta)g^2(s)} \right) d s 
		\geq c \int_0^t  g(s) g'(s) \log \frac{s}{g^2(s)} ds,
	\end{align*}
	the last step being true for $g^2(s)/s$ sufficiently small. Now integrate by parts (on $[a, t]$ and then take a limit using $g^2(t)/t \rightarrow 0$) to get
	\[
		\int_0^t  g(s) g'(s) \log \frac{s}{g^2(s)} ds \geq \frac{1}{2} g^2(t) \log \frac{t}{g^2(t)} - \frac{1}{2}\int_0^t \frac{g^2(s)}{s} ds,
	\]
	estimating the remainder by
	\[
		\int_0^t \frac{g^2(s)}{s} ds \leq t \int_0^t \frac{g^2(s)}{s^2} ds \leq 2 t \int_0^t \frac{g(s) g'(s)}{s} ds.
	\]
	Combining everything leads to
	\[
		\frac{g^2(t)}{t}\log \frac{t}{g^2(t)}  \leq C \int_0^{t} g(s) g'(s) s^{-1}ds  + C t
	\]
	and we conclude as before.
\end{proof}
This is one of the nonlinear differential inequalities for $g$ we were seeking.

Rather than prove just an upper bound like $A(t) \leq C g^2(t)/t$, we can at this point actually compute the precise limit of the ratio with a more careful analysis of the potential bounds as well as an extra time averaging.

\begin{theorem}\label{thm:asymptotic}
	If $n \geq 3$, then
	\[
		\lim_{t \searrow 0} \frac{t A(t)}{g^2(t)} = \frac{(4\pi)^{n/2}}{2 (n - 2)\w_{n-1}},
	\]
	where $\w_{n-1}$ is the volume of $S^{n-1}$. If $n = 2$, then instead
	\[
		\lim_{t \searrow 0} \frac{t A(t)}{g^2(t)} \frac{1}{\log \frac{t}{g^2(t)}} = \frac{1}{2}.
	\]
\end{theorem}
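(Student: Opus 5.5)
The plan is to redo the estimates of Section \ref{sec:potential} with sharp constants, and to evaluate the singular part only after the time averaging already used in Theorem \ref{thm:fullbelow}. Recall $|I_t|\leq C(t+g^2(t))\leq Ct$. For $\Ip_t$ we use $e^{-g^2(s)/4s}=1+o(1)$ on $[0,t]$, which gives the exact asymptotics
\[
\Ip_t=-(1+o(1))\,\w_{n-1}(4\pi)^{-n/2}A(t).
\]
For $\Itrans_t$ together with the near part of $\Im_t$, i.e.\ the integral over $s\in[t,(1+\eta)t]$, we keep the exact kernel $J(g(s),g(t),s-t)$. The negative term $G(s,y)$ is bounded by $\e s^{-1/2}$, exactly as in the proof of Lemma \ref{lem:transfrombelow}, so its contribution is negligible.

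The far part $s\in[(1+\eta)t,1]$ is handled by Lemma \ref{lem:farfieldest}. It is bounded by
\[
C(\eta)\,t\int_{(1+\eta)t}^{T} g^{n-1}g' s^{-n/2-1}\,ds+Ct .
\]
We write $g^{n-1}s^{-n/2}=(g/\sqrt s)^{n-2}\,g\,s^{-1}$ and use $g^2/s\to0$. After integration by parts and \eqref{eq:intdoubling}, this part becomes $o(g^2(t)/t)+Ct$. The price is a constant depending on $\eta$, so we must take $t\to0$ before letting $\eta\to0$.

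Next we average in time. We integrate the identity $I_\tau=\Ip_\tau+\Itrans_\tau+\Im_\tau$ over $\tau\in[0,t]$ and exchange the order of integration in the singular part. This produces
\[
\int_0^t g'(s)\int_{s/(1+\eta)}^{s}J(g(s),g(\tau),s-\tau)\,d\tau\,ds .
\]
The inner integral is computed via $\int_0^\infty G(\sigma,x)\,d\sigma=\Phi(x)=\frac{1}{(n-2)\w_{n-1}|x|^{n-2}}$. By the mean value property, the Newtonian potential of a uniform layer on $\partial B_R$, evaluated at a point of $\partial B_R$, equals $\w_{n-1}R^{n-1}\Phi(R)=R/(n-2)$.

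To justify that the inner integral equals $\frac{g(s)}{n-2}(1+o(1))$ we need two facts.
\begin{enumerate}
\item Replacing $g(\tau)$ by $g(s)$ in the second argument of $J$ costs $o(1)$. This follows from Lemma \ref{lem:doubling}, which gives $|g(s)-g(\tau)|\ll g(s)$ over the relevant range $s-\tau\lesssim g^2(s)/\e$.
\item The tail $\sigma\geq \eta s/(1+\eta)$ is negligible, since there $J\lesssim g^{n-1}\sigma^{-n/2}$ by Lemma \ref{lem:transfromabove}, and $n\geq3$.
\end{enumerate}
With these, the averaged identity reads
\[
\frac{g^2(t)}{2(n-2)}(1+o(1)) = \w_{n-1}(4\pi)^{-n/2}(1+o(1))\int_0^t A(\tau)\,d\tau + o(g^2(t))+O(t^2),
\]
where $t^2=o(g^2(t))$ by \eqref{eq:strictlymonotone}.

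The main obstacle is de-averaging, i.e.\ passing from $\int_0^t A$ to $tA(t)$. The plan is to apply the averaged identity on $[t,\lambda t]$ instead of $[0,t]$, with $\lambda>1$ close to $1$. Since $A$ is monotone,
\[
(\lambda-1)tA(t)\leq\int_t^{\lambda t}A\leq(\lambda-1)tA(\lambda t).
\]
The left side of the identity on this interval is $\frac{1}{2(n-2)}\left(g^2(\lambda t)-g^2(t)\right)$, up to errors of size $o(g^2(t))$. From Lemma \ref{lem:doubling}, \eqref{eq:intdoubling} and the a priori bound $A(t)\leq Cg^2(t)/t$ (which comes from the averaged identity itself together with Theorem \ref{thm:fullbelow}), we get that $g^2(\lambda t)-g^2(t)$ and $(\lambda-1)t\,A(\cdot)$ are comparable on $[t,\lambda t]$ with ratio $1+O(\lambda-1)$. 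We then send $t\to0$, then $\lambda\to1$, then $\eta\to0$, giving the limit $\frac{(4\pi)^{n/2}}{2(n-2)\w_{n-1}}$.

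The delicate points are the uniformity of the $o(1)$ errors and the differentiated form of the relation, in particular that $g^2(\lambda t)-g^2(t)$ stays comparable to $(\lambda-1)g^2(t)$. I would try to extract the latter from the doubling bound combined with the lower bound $g'>0$. If that fails, I would fall back on comparing $\liminf$ and $\limsup$ of $tA/g^2$ along sequences chosen to maximize or minimize the ratio.

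For $n=2$ the procedure is the same, but $\Phi$ is logarithmic and the near part of $\Im$ dominates. The averaged inner integral becomes $\int_{s/(1+\eta)}^{s-g^2(s)}\frac{d\tau}{s-\tau}\approx\log\frac{s}{g^2(s)}$, with the constant coming out exactly from $J\approx g(s)/(2(s-\tau))$, i.e.\ $\w_1 R/(4\pi)\cdot 2\pi/\sigma$ in this range. Integrating by parts as in Theorem \ref{thm:fullbelow} then yields $\frac{1}{2}g^2\log\frac{t}{g^2}$. The de-averaging works as above, since the slowly varying log factor is harmless.
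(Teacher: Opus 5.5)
There is a genuine gap in the de-averaging step, which you yourself identify as the crux. Your proposed mechanism for it does not work. To read off $tA(t)/g^2(t)$ from the identity on $[t,\lambda t]$, you need $g^2(\lambda t)-g^2(t)=(\lambda-1)g^2(t)(1+O(\lambda-1))$. But Lemma \ref{lem:doubling}, \eqref{eq:intdoubling} and $g'>0$ only give $g^2(t)\le g^2(\lambda t)\le\lambda^{C}g^2(t)$, with a large solution-dependent $C$, which is far from a ratio tending to $1$. The property you need amounts to $g^2(t)/t$ being slowly varying. That is a consequence of the theorem, not an available input, and your $\liminf/\limsup$ fallback has no mechanism to close this either.

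The missing idea is to de-average on the $A$ side. By Lemma \ref{lem:doubling} and Theorem \ref{thm:fullbelow},
\[
tA'(t)=t^{1-\frac n2}g^{n-1}(t)g'(t)\le C\Big(\frac{g^2(t)}{t}\Big)^{\frac n2}\le C\Big(\frac{g^2(t)}{t}\Big)^{\frac{n-2}{2}}A(t),
\]
so $tA'/A\to0$. For $n=2$ the same holds using the logarithmic form of Theorem \ref{thm:fullbelow}. Hence $A(\lambda t)/A(t)\to1$ and $\int_0^tA=(1+o(1))\,tA(t)$. The identity averaged over $[0,t]$ then gives the limit directly, and the $[t,\lambda t]$ construction is unnecessary.

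The same slow variation is needed for your error terms, which you measure on the wrong scale. Your claim that the far part is $o(g^2(t)/t)+Ct$ ``after integration by parts and \eqref{eq:intdoubling}'' would require $\int_{(1+\eta)t}^{T}g^2(s)s^{-3}\,ds\lesssim g^2(t)/t^2$. That is control on the growth of $g^2(s)/s$ for $s>t$, which integrated doubling with a large exponent does not provide. Likewise, $\int_0^t o(g^2(\tau)/\tau)\,d\tau=o(g^2(t))$ is not known a priori. The bound $A\lesssim g^2/t$ that would repair this is one you derive from the very identity whose errors are at stake, so it cannot be used here.

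What you can prove is $o(A(t))$. Write the far part as $C(\eta)\,t\int_{(1+\eta)t}^1A'(s)s^{-1}\,ds$ and use $sA'(s)\le\d A(s)\le\d A(t)(s/t)^{\d}$ for $s\le T$; this is exactly how the paper treats $R_t$. Similarly, the negative kernel on $[t,(1+\eta)t]$ contributes at most a constant times $A((1+\eta)t)-A(t)=o(A(t))$. A fixed $\e$ in your bound $\e s^{-1/2}$ would only give $\e\eta\, g(t)/\sqrt t$, which is not small.

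With errors measured against $A(t)$ and slow variation in hand, your argument becomes the paper's. One first gets the pointwise relation $\int_t^{2t}J(g(s),g(t),s-t)g'(s)\,ds=(\w_{n-1}(4\pi)^{-n/2}+o(1))A(t)$, then averages over $[0,M]$ and exchanges the order of integration. The exchange leaves a boundary piece (the paper's $E_2$) that you omitted. It must be shown to be $o(g^2(M))$ for $n\ge3$, and $O(g^2(M))$ for $n=2$. Your evaluation of the inner integral through the single-layer Newtonian potential, $\int_0^\infty J(1,1,\sigma)\,d\sigma=1/(n-2)$, is correct and cleaner than the paper's explicit spherical computation.
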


\begin{proof}
	We begin by observing, from Theorem \ref{thm:fullbelow} and Lemma \ref{lem:doubling}, that
	\[
		t A'(t) = t (t^{-n/2} g^{n-1}(t) g'(t)) \leq C t^{-n/2} g^n(t) \leq C A(t) \left( \frac{g^2(t)}{t} \right)^{(n-2)/2},
	\]
	so
	\[
		\lim_{t \rightarrow 0} \frac{t A'(t)}{A(t)} = 0,
	\]
	when $n \geq 3$. When $n = 2$ we instead get
	\[
		t A'(t) \leq C  \frac{A(t)}{\log (t/g^2(t))},
	\]
	which leads to the same conclusion.

	Integrating this leads to
	\[
		\left|\lim_{t \searrow 0} \log \frac{A(\lambda t)}{A(t)}\right| = \left|\lim_{t \searrow 0} \int_{t}^{\lambda t} \frac{A'(s)}{A(s)} ds\right| \leq  |\log \lambda| \limsup_{s \searrow 0} \frac{sA'(s)}{A(s)} = 0
	\]
	for any $\lambda \in (0, \infty)$, so in particular $A(\lambda t)/A(t) \rightarrow 1$. As $A$ is monotone and $A(\lambda t) < A(t)$ for $0<\lambda \leq 1$, applying the dominated convergence theorem and changing variables leads to
	\begin{equation}\label{eq:precise3}
		\frac{1}{t A(t)}\int_0^t A(s) ds = \int_0^1 \frac{A(\lambda t)}{A(t)} d\lambda \rightarrow 1.
	\end{equation}
	A final consequence of this differential inequality is that for $t < s$ small relative to $\e$, integrating $A'(t)/A(t) \leq \e/t$ leads to $A(s) s^{-\e} \leq A(t) t^{-\e}$, and in particular
	\begin{equation}\label{eq:precise1}
		\lim_{t \searrow 0} \frac{t}{A(t)} = 0.
	\end{equation}

	We use these observations to refine our potential estimates. We have the exact relation
	\[
		I_t = \int_t^{2t} J(g(s), g(t), s - t) g'(s) ds - \frac{\w_{n-1}}{(4\pi)^{n/2}} \int_0^{2t} e^{-g^2(s)/4s} g^{n-1}(s) g'(s) s^{-n/2} ds + R_t,
	\]
	where $R$ is the contribution from the past:
	\[
		R(t) = \int_{2t}^1 \int_{|y| = g(s)} G(s - t, y - g(t)) - G(s, y) dS(y) g'(s) ds.
	\]

	We use Lemma \ref{lem:farfieldest} to bound $R$:
	\[
		|R(t)| \leq C t \int_{2t}^1 g^{n-1}(s) g'(s) s^{-n/2 - 1} ds \leq C t \int_{2t}^1 \frac{A'(s)}{s} ds.
	\]
	We claim that for every $\e > 0$, there is a $t(\e)$ such that for $t < t(\e)$, $|R(t)| \leq \frac{\e}{10} A(t)$. Indeed, integrate over $[2t, T]$ and $[T, 1]$ separately, choosing $T$ so that $s A'(s) \leq \d A(s)$ for $s < T$:
	\begin{align*}
		t \int_{2t}^T \frac{A'(s)}{s} ds &\leq t \int_{2t}^T \d \frac{A(s)}{s^2} ds \\
		&\leq t^{1 -\d} A(t) \int_{2t}^T \d s^{\d-2} ds 
		\leq \frac{\d}{1 - \d} A(t).
	\end{align*}
	Choose $\d$ such that $C \frac{\d}{1 - \d} < \frac{\e}{20}$, in the process choosing $T$. Then for the remaining part of the integral,
	\[
		t \int_{T}^1 \frac{A'(s)}{s} ds \leq C t \int_{T}^1 g^{n}(s) s^{-n/2 - 2} ds \leq C(T) t,
	\]
	using $g(s)\leq A_0$. Select $t(\e)$ small enough so that $C(T) t \leq \frac{\e}{20} A(t)$ for $t < t(\e)$, from \eqref{eq:precise1}.

	Together with the fact that $|I(t)| \leq C t \leq \frac{\e}{10} A(t)$ for $t$ small, we have that
	\[
		\left|\int_t^{2t} J(g(s), g(t), s - t) g'(s) ds - \frac{\w_{n-1}}{(4\pi)^{n/2}} \int_0^{2t} e^{-g^2(s)/4s} g^{n-1}(s) g'(s) s^{-n/2} ds\right| \leq \frac{\e}{5} A(t).
	\]
	In the second term on the left, observe that $e^{-g^2(s)/4s}\rightarrow 1$ as $s \rightarrow 0$, so after further restricting $t$ and also using that $A(2t)/A(t) \rightarrow 1$, we get
	\[
		\left|\int_t^{2t} J(g(s), g(t), s - t) g'(s) ds - \frac{\w_{n-1}}{(4\pi)^{n/2}} A(t)\right| \leq \frac{\e}{2} A(t).
	\]
	As this is true for any $\e$ for sufficiently small $t$, we have shown that
	\[
		\lim_{t \searrow 0} \frac{1}{A(t)}\int_t^{2t} J(g(s), g(t), s - t) g'(s) ds = \frac{\w_{n-1}}{(4\pi)^{n/2}}.
	\]
	Rather than estimate this integral directly, we first average one more time in $t$:
	\begin{equation}\label{eq:precise2}
		\lim_{M\searrow 0}\frac{1}{A(M)M} \int_0^M \int_t^{2t} J(g(s), g(t), s - t) g'(s) ds dt = \lim_{M\searrow 0} \frac{1}{A(M)M} \int_0^M A(t) \frac{\w_{n-1}}{(4\pi)^{n/2}} dt = \frac{\w_{n-1}}{(4\pi)^{n/2}},
	\end{equation}
	using the dominated convergence theorem.

	We now estimate this time-averaged quantity, changing order of integration:
	\begin{align*}
		\int_0^M &\int_t^{2t} J(g(s), g(t), s - t) g'(s) ds dt \\
		&= \int_0^M \int_{s/2}^s g'(s) J(g(s), g(t), s - t) dt ds + \int_M^{2M} \int_{s/2}^M g'(s) J(g(s), g(t), s - t) dt ds \\
		&=: E_1 + E_2.
	\end{align*}
	We first show that $E_2$ is small, and then carefully compute the limit of $E_1$.

	Apply Lemma \ref{lem:transfromabove} to the integrand of $E_2$: on the region of integration, $t \leq s \leq 2 t$, so $g^2(t) \leq g^2(s) \leq C g^2(t)$ by \eqref{eq:intdoubling}. We therefore obtain  $J(g(s), g(t), s - t) \leq C \min\{(s - t)^{-1/2}, g^{n-1}(t) (s - t)^{-n/2}\}$. Now integrate, using the change of variables $z = s - t$:
	\begin{align*}
		E_2 &\leq C \frac{g(M)}{M} \int_0^{M} \int_{\max(z,M-z)}^M \min\{z^{-1/2}, g^{n-1}(t) z^{-n/2}\} dt dz \\
		&\leq  C \frac{g(M)}{M} \int_0^{M} z \min\{z^{-1/2}, g^{n-1}(M) z^{- n/2}\} dz \\
		&\leq C \frac{g(M)}{M} \left[\int_{0}^{g^2(M)} z^{1/2} dz + g^{n-1}(M) \int_{g^2(M)}^{M}  z^{1 - n/2} dz \right].
	\end{align*}
	We used that $s \approx t \approx M$ and $g^2(t) \approx g^2(s) \approx g^2(M)$ repeatedly, as well as Lemma \ref{lem:doubling}. After evaluating the integrals, the two terms give:
	\[
		E_2 \leq C g^2(M) \begin{cases}
			g^2(M)/M + 1, & n = 2 \\
			g^2(M)/M + g(M)/\sqrt{M}, & n = 3 \\
			g^2(M)/M + g^2(M)/M \log (M/g^2(M)), & n = 4\\
			g^2(M)/M,  & n \geq 5.
		\end{cases}
	\]
	The important point here is that when $n \geq 3$, $E_2/g^2(M) \rightarrow 0$, while when $n = 2$, $E_2 \leq C g^2(M)$.

	To compute $E_1$, the key point is to evaluate the limit of the inner integral
	\[
		\int_{s/2}^s J(g(s), g(t), s - t) dt = \int_0^{s/(2 g^2(s))} g^{-1}(s) J(1, \frac{g(s - \t g^2(s))}{g(s)}, \t)  g^2(s) d \t
	\]
	after using a change of variables $s - t = g^2(s) \t$ and noting that $J(R, R', a) = R^{-1} J(1, R'/R, a/R^2)$. Now, for small $s$ we have that 
	\[
		g^2(s) - g^2(s - \t g^2(s))  = \int_{s - \t g^2(s)}^s 2 g(z) g'(z) dz \leq C \t \frac{g^2(s)}{s} g^2(s)
	\]
	from Lemma \ref{lem:doubling}, and so 
	\[
 c\leq \frac{g(s-\tau g^2(s))}{g(s)}\leq1,
 \qquad
 \frac{g(s-\tau g^2(s))}{g(s)}\longrightarrow1
 \quad\text{for each fixed }\tau>0.
\]
In particular, $J(1,\frac{g(s - \t g^2(s))}{g(s)}, \t ) \rightarrow J(1, 1, \t)$ for every $\t$ as $s \rightarrow 0$. For $n \geq 3$, use the bound $J(1,\frac{g(s - \t g^2(s))}{g(s)}, \t ) \leq C \min\{\t^{-1/2}, \t^{-n/2} \} \in L^1([0, \infty))$ from Lemma \ref{lem:transfromabove} with the dominated convergence theorem to conclude that
	\[
		\frac{1}{g(s)}\int_{s/2}^s J(g(s), g(t), s - t) dt = \int_0^{s/(2 g^2(s))}  J(1, \frac{g(s - \t g^2(s))}{g(s)}, \t) d \t \rightarrow \int_0^{\infty}  J(1, 1, \t) d \t.
	\]
	This integral can be evaluated directly:
	\begin{align*}
		\int_0^{\infty}  J(1, 1, \t) d \t &= \int_{S^{n-1}} \int_0^\infty (4 \pi \t)^{-n/2}e^{-|Y - e_n|^2/(4\t)} d\t  dS(Y)\\
		&= \int_{S^{n-1}} (4\pi)^{-n/2} \Gamma(\frac{n}{2} - 1) 2^{n -2} |Y - e_n|^{2 - n}  dS(Y) \\
		&= \frac{1}{(n - 2)\w_{n-1}}\int_{S^{n-1}}|Y - e_n|^{2 - n} dS(Y),
	\end{align*}
	using the volume of the unit sphere. Changing to polar coordinates,
	\begin{align*}
		\int_{S^{n-1}}|Y - e_n|^{2 - n} dY &= \w_{n-2} \int_{0}^\pi [2 \sin(\theta/2)]^{2 -n} \sin^{n-2} \theta d\theta \\
		&= \w_{n-2} \int_{0}^\pi \cos^{n-2}(\theta/2) d\theta \\
		&= 2 \w_{n-2} \int_{0}^{\pi/2} \cos^{n-2}(\theta) d \theta \\
		&= \int_{S^{n-1}}1 dS(y) \\
		&= \w_{n-1},
	\end{align*}
	where the second-to-last step changes variables back to the sphere in a different parametrization.

	We have shown that, if $n \geq 3$,
	\begin{align*}
		E_1 &= \int_0^M \int_{s/2}^s  J(g(s), g(t), s - t) dt g'(s) ds \\
		&= \int_0^M (\frac{1}{n - 2} + o_M(1))g(s) g'(s) ds = (\frac{1}{n - 2} + o_M(1)) \frac{g^2(M)}{2}.
	\end{align*}
	Therefore $\frac{E_1}{g^2(M)} \rightarrow \frac{1}{2(n-2)}$, while $\frac{E_2}{g^2(M)} \rightarrow 0$. Plugging into \eqref{eq:precise2} gives
	\[
		\lim_{M \searrow 0} \frac{M A(M)}{g^2(M)} = \frac{(4\pi)^{n/2}}{2 (n-2) \w_{n-1}},
	\]
	and so we conclude.

	For $n = 2$, we compute $E_1$ more directly, still focusing on the inner integral
	\[
		\int_{s/2}^s J(g(s), g(t), s - t) dt = \int_{s/2}^{s - g^2(s)} J(g(s), g(t), s - t) dt + \int_{s - g^2(s)}^s J(g(s), g(t), s - t) dt.
	\]
	The second integral turns out to be small:
	\[
		\int_{s - g^2(s)}^s J(g(s), g(t), s - t) dt \leq C \int_{s - g^2(s)}^s (s - t)^{-1/2} dt \leq C g(s),
	\]
	using Lemma \ref{lem:transfromabove}. For the other part, set $\t := s - t$ and estimate (using $g(t)\leq g(s)$)
	\[
		J(g(s), g(s - \t), \t) = \frac{g(s)}{2 \t}\fint_{S^{1}} e^{-|g(s)Y - g(s-\tau)e_2|^2/(4\t)} dY = \frac{g(s)}{2 \t}(1 + O(\frac{g^2(s)}{\t})).
	\]
	That $1$ is the leading-order contribution: if we integrate in $\t$,
	\[
		\int_{s/2}^{s - g^2(s)} J(g(s), g(t), s - t) dt = \int_{g^2(s)}^{s/2} J(g(s), g(s -\t), \t) d\t = \frac{g(s)}{2}[\log \frac{s}{g^2(s)} + O(1)].
	\]
	Here we absorbed the $O(g^2(s)/s)$ remainders into $O(1)$. Thus
	\[
		E_1 = \int_0^M \int_{s/2}^s J(g(s), g(t), s - t) dt g'(s) ds = \int_0^M \frac{g(s)g'(s)}{2}[\log \frac{s}{g^2(s)} + O(1)] ds.
	\]
	Integrating by parts (on $[a, M]$ and then taking a limit),
	\[
		E_1 = O(g^2(M)) + \frac{1}{4} g^2(M) \left[\log \frac{M}{g^2(M)} + 1\right] - \frac{1}{4} \int_0^M \frac{g^2(s)}{s} ds,
	\]
	and so
	\[
		E_1 + E_2 = O(g^2(M) + \int_0^M \frac{g^2(s)}{s} ds) + \frac{1}{4} g^2(M) \log \frac{M}{g^2(M)}
	\]
	Using Theorem \ref{thm:fullbelow}, we know that 
	\[
 \ell_*(M):=\inf_{0<s\leq M}\log\frac{s}{g^2(s)}
 \longrightarrow\infty,
\]
\begin{align*}
 \frac1{MA(M)}\int_0^M\frac{g^2(s)}{s} \> ds
 &\leq \frac{C}{MA(M)\ell_*(M)}
       \int_0^M A(s) \> ds\\
 &\leq \frac{C}{\ell_*(M)}
 \longrightarrow0,
\end{align*}
using the monotonicity of $A$.

Now returning to \eqref{eq:precise2},
	\[
		\frac{1}{2} = \lim_{M\searrow 0}\frac{1}{A(M)M} \int_0^M \int_t^{2t} J(g(s), g(t), s - t) g'(s) ds dt = \lim_{M\searrow 0}  \frac{g^2(M)}{4 M A(M)}  \log \frac{M}{g^2(M)}.
	\]
	This gives the $n = 2$ version of the conclusion after taking reciprocals.
\end{proof}

\section{ODE analysis}\label{sec:ode}

In this section, we study the asymptotic differential inequalities (starting with $n \geq 3$)
\[
	A(t) = \frac{(4\pi)^{n/2}}{2 (n - 2)\w_{n-1}} \frac{g^2(t)}{t}(1 + o_t(1)),
\]
which come from Theorem \ref{thm:asymptotic}.

Our goal is to obtain bounds on $g$. We begin with a change of variables:
\[
	t = e^{-\tau}, \qquad h(\tau) = \frac{g^2(t)}{t} = e^\tau g^2(e^{-\tau}).
\]
Dots as in $\dot{h}$ refer to derivatives with respect to $\tau$. Note for later that in these variables, we know that $h(\tau) \rightarrow 0$ as $\tau \rightarrow \infty$. Then
\[
	\dot{h} = h - 2 g g' \qquad \implies \qquad g g' = \frac{h - \dot{h}}{2}.
\]
In these variables $A(e^{-\tau}) = \int_0^{e^{-\tau}} g^{n-1}(s) g'(s) s^{-n/2}ds$; rewriting in terms of $h$ and changing variables leads to
\[
	A(e^{-\tau}) = \int_\tau^\infty \frac{h - \dot{h}}{2} h^{\frac{n-2}{2}} e^{\tau'}  e^{-\tau'}d\tau' = \int_\tau^\infty \frac{h - \dot{h}}{2} h^{\frac{n-2}{2}}d\tau'.
\]
In this notation we have
\[
	A = \frac{(4\pi)^{n/2}}{2 (n - 2)\w_{n-1}} h (1 + o_\t(1)).
\]

It will be helpful to integrate the expression
\[
	A(e^{-\tau}) = \int_\tau^\infty \frac{h - \dot{h}}{2} h^{\frac{n-2}{2}}d\tau'
\]
by parts to express $A$ purely in terms of $h^{n/2}$ and its primitive. While this is an absolutely convergent integral and $A(e^{-\tau}) \rightarrow 0$ as $\tau \rightarrow \infty$, it is not immediately clear that when written as a difference of two terms they are each absolutely convergent (we have not shown yet that $\dot{h} \leq 0$, in particular). To avoid problems we first integrate by parts on an interval and then take a limit of the result.
\[
	A(e^{-\tau}) - A(e^{-s}) = \frac{1}{2} \left[\int_\tau^s h^{n/2} d\tau' - \int_\tau^s \dot{h} h^{n/2 - 1} d\tau'\right] = \frac{1}{2}\int_\tau^s h^{n/2} d\tau' - \frac{1}{n} h^{n/2}(s) + \frac{1}{n} h^{n/2}(\tau).
\]
Taking $s \rightarrow \infty$, and using that $h(s) \rightarrow 0$ on the right, shows that $\int_\tau^\infty h^{n/2} d\tau' < \infty$ and
\[
	A(e^{-\tau}) = \frac{1}{2}\int_\tau^\infty h^{n/2} d\tau' +  \frac{1}{n} h^{n/2}(\tau).
\]
Set
\[
	B(\tau) = \int_\tau^\infty h^{n/2} d\tau';
\]
then we have $A = \frac{1}{2} B - \frac{1}{n} \dot{B}$ and $-h^{n/2} = \dot{B}$. 

\begin{theorem}
	Assume $n \geq 3$. Then
	\[
	B(\tau) = \left(\frac{n - 2}{2}\right)^{- \frac{2}{n - 2}} \left(\frac{(4\pi)^{n/2}}{(n - 2)\w_{n-1}}\right)^{\frac{n}{n-2}} \tau^{- \frac{2}{n - 2}} (1 + o_\t(1)).
	\]
	We also have
	\[
		g(t) = \left(\frac{2 (4\pi)^{n/2}}{(n - 2)^2\w_{n-1}}\right)^{\frac{1}{n-2}} \sqrt{t}( - \log t)^{-\frac{1}{n - 2}} (1 + o_t(1))
	\]
	and
	\[
		g' \leq C \frac{1}{\sqrt{t}} ( - \log t)^{- \frac{1}{n - 2}}.
	\]
\end{theorem}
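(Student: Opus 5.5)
Write $K_n = \frac{(4\pi)^{n/2}}{2(n-2)\w_{n-1}}$. The plan is to turn Theorem \ref{thm:asymptotic} into a first-order asymptotic ODE for $B$ and integrate it. In the $\tau$-variables Theorem \ref{thm:asymptotic} reads $A = K_n h(1+o(1))$, and $h = (-\dot B)^{2/n}$. Combined with $A = \frac12 B - \frac1n \dot B$ this gives
\[
	\frac12 B - \frac1n \dot B = K_n (-\dot B)^{2/n}(1+o_\tau(1)).
\]
Since $h\to0$, we have $-\dot B = h^{n/2} = o(h)$, so the term $\frac1n\dot B$ is negligible compared to the right side. Hence $B = 2K_n(-\dot B)^{2/n}(1+o(1))$, that is,
\[
	-\dot B = (2K_n)^{-n/2} B^{n/2}(1+o(1)).
\]

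This is a separable ODE with $B>0$ decreasing to $0$. Writing it as
\[
	\frac{d}{d\tau} B^{-\frac{n-2}{2}} = \frac{n-2}{2}(2K_n)^{-n/2}(1+o(1)),
\]
and integrating from a large $\tau_0$ to $\tau$, we get $B^{-(n-2)/2}(\tau) = \frac{n-2}{2}(2K_n)^{-n/2}\tau(1+o(1))$. The initial value is absorbed because $B^{-(n-2)/2}(\tau_0)$ is fixed. Therefore
\[
	B = \left(\tfrac{n-2}{2}\right)^{-\frac2{n-2}}(2K_n)^{\frac n{n-2}}\tau^{-\frac2{n-2}}(1+o(1)),
\]
and $2K_n = \frac{(4\pi)^{n/2}}{(n-2)\w_{n-1}}$ gives the stated constant.

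Next we pass back to $h$ and then $g$. From $h = (-\dot B)^{2/n} = (2K_n)^{-1}B(1+o(1))$ we get $h = (2K_n)^{-1}\left(\frac{n-2}{2}\right)^{-\frac2{n-2}}(2K_n)^{\frac n{n-2}}\tau^{-\frac2{n-2}}(1+o(1))$. The power of $2K_n$ simplifies to $\frac{2}{n-2}$, so
\[
	h = \left(\tfrac{2\cdot 2K_n}{n-2}\right)^{\frac2{n-2}}\tau^{-\frac2{n-2}}(1+o(1)) = c_n^2\,\tau^{-\frac2{n-2}}(1+o(1)),
\]
because $\frac{4K_n}{n-2} = \frac{2(4\pi)^{n/2}}{(n-2)^2\w_{n-1}} = c_n^{n-2}$. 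Since $g^2 = t h$ and $\tau = -\log t$, taking square roots yields the formula for $g$.

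For the derivative bound, Lemma \ref{lem:doubling} gives $g' \leq C g/t$. That is only of order $t^{-1/2}(-\log t)^{-1/(n-2)}$ times $t^{-1/2}$, so it is too weak by a factor of $\sqrt t$. Instead I would use the first relation in the proof of Theorem \ref{thm:fullbelow}, namely $c\int_t^{t+g^2(t)} g'(s)(s-t)^{-1/2}\,ds \leq C(A(t)+t)$, combined with an upper bound on the transition integral from Lemma \ref{lem:transfromabove}. Applying that inequality with $t$ replaced by a slightly earlier time $t' \in [t-g^2(t), t]$, and using that $g'$ is comparable over such short intervals (Bernstein estimates, Lemma \ref{lem:curvature} applied to $u_t$ or $g'$), gives $g'(t)\, g(t) \leq C A(t) \leq C h$. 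Dividing by $g = \sqrt{th}$ gives $g' \leq C\sqrt{h/t} = C t^{-1/2}(-\log t)^{-1/(n-2)}$.

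The main obstacle I expect is this last step. It needs some local regularity of $g'$ at scale $g^2$ to pass from the averaged bound to a pointwise one. If that fails, I would average the inequality in $t$ over $[t, t+g^2(t)]$ as in the proof of Theorem \ref{thm:fullbelow} to control $g^2(t+g^2)-g^2(t)$, and then invoke a Harnack-type estimate for $u$ near the interface to upgrade this to a pointwise bound on $g'$.
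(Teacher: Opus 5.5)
Your derivation of the asymptotics for $B$, $h$ and $g$ is correct and is essentially the paper's argument: you absorb $\frac1n h^{n/2}=o(h)$ to get $B=2K_nh(1+o(1))$ and then integrate $\frac{d}{d\tau}B^{-(n-2)/2}$. Reading $h$ off from $B=2K_nh(1+o(1))$ rather than from $-\dot B=h^{n/2}$ is an inessential variation.

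The gap is in the bound on $g'$, and it comes from an arithmetic slip. Since $g^2=th$, you have $g/t=\sqrt{h/t}=t^{-1/2}h^{1/2}$, not something of order $t^{-1}$. The asymptotics you just proved give $h^{1/2}=c_n(-\log t)^{-1/(n-2)}(1+o(1))$. So Lemma \ref{lem:doubling} yields $g'\le Cg/t\le C t^{-1/2}(-\log t)^{-1/(n-2)}$ immediately, and this one line is the paper's entire proof of the third claim. In fact the inequality your detour aims for, $g'\le C\sqrt{h/t}$, is literally $g'\le Cg/t$.

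The replacement argument you propose does not work as written. To pass from the averaged inequality $\int_t^{t+g^2(t)}g'(s)(s-t)^{-1/2}\,ds\le C(A(t)+t)$ to a pointwise bound on $g'(t)$, you need $g'$ to be comparable to $g'(t)$ on an interval of length of order $g^2(t)$. That is a Harnack-type lower bound for $g'$ at that scale, and nothing in the paper provides it. Lemma \ref{lem:curvature} is only an upper bound on $|\nabla v|$ for a solution $v$ of the Stefan problem, in terms of $\sup|v|$ and the curvature. It gives no control on the oscillation of $g'$. It also cannot be applied to $u_t$, which jumps across the interface (there $u_t^+-u_t^-=(g')^2\neq0$) and does not solve a Stefan problem. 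Your fallback via an unspecified ``Harnack-type estimate for $u$ near the interface'' has the same problem. Drop the detour and apply Lemma \ref{lem:doubling} directly.
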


\begin{proof}
	Start with
	\[
		A = \frac{(4\pi)^{n/2}}{2 (n - 2)\w_{n-1}} h (1 + o_\t(1)).
	\]
	We rewrite $A = \frac{1}{2} B + \frac{1}{n} h^{n/2}$, so
	\[
		\frac{1}{2} B + \frac{1}{n} h^{n/2} = \frac{(4\pi)^{n/2}}{2 (n - 2)\w_{n-1}} h (1 + o_\t(1)).
	\]
	Recalling that $h \rightarrow 0$, the middle term with $h^{n/2}$ can be reabsorbed to give
	\[
		B =\frac{(4\pi)^{n/2}}{(n - 2)\w_{n-1}} h (1 + o_\t(1)).
	\]
	Now use $h^{n/2} = - \dot{B}$ and integrate the resulting differential inequalities
	\begin{equation}\label{eq:Bineq}
		- B^{-\frac{n}{2}} \dot{B} = \left(\frac{(4\pi)^{n/2}}{(n - 2)\w_{n-1}}\right)^{-n/2} (1 + o_\t(1))
	\end{equation}
	from a large $\t_*$ to $\t$ to get
	\[
		B^{-n/2 + 1}(\t) = \frac{n - 2}{2} \left(\frac{(4\pi)^{n/2}}{(n - 2)\w_{n-1}}\right)^{-n/2} \t (1 + o_\t(1)) ,
	\]
	or
	\begin{equation}\label{eq:Bc1}
		B = \left(\frac{n - 2}{2}\right)^{- \frac{2}{n - 2}} \left(\frac{(4\pi)^{n/2}}{(n - 2)\w_{n-1}}\right)^{\frac{n}{n-2}} \tau^{- \frac{2}{n - 2}} (1 + o_\t(1)).
	\end{equation}

	Plugging \eqref{eq:Bc1} into \eqref{eq:Bineq} gives
	\begin{equation}\label{eq:Bpc1}
		 - \dot{B} = \tau^{- \frac{n}{n - 2}} \left(\frac{(4\pi)^{n/2}}{(n - 2)\w_{n-1}}\right)^{\frac{n}{n-2}} \left(\frac{n - 2}{2}\right)^{- \frac{n}{n - 2}}(1 + o_\t(1)) ,
	\end{equation}
	so from the definition of $B$ we see that
	\[
		h = \left(\frac{n - 2}{2}\right)^{- \frac{2}{n - 2}} \left(\frac{(4\pi)^{n/2}}{(n - 2)\w_{n-1}}\right)^{\frac{2}{n-2}}\tau^{- \frac{2}{n - 2}}(1 + o_\t(1))  .
	\]
	Rewriting in terms of $g$,
	\[
		\frac{g(t)}{\sqrt{t}( - \log t)^{-\frac{1}{n - 2}}} \rightarrow \left(\frac{n - 2}{2}\right)^{- \frac{1}{n - 2}} \left(\frac{(4\pi)^{n/2}}{(n - 2)\w_{n-1}}\right)^{\frac{1}{n-2}} =  \left(\frac{2 (4\pi)^{n/2}}{(n - 2)^2\w_{n-1}}\right)^{\frac{1}{n-2}}.
	\]
	The upper bound on $g'$ follows directly from Lemma \ref{lem:doubling}.
\end{proof}

There is a corresponding result in 2D, although the ODE has different behavior due to the logarithmic corrections.
\begin{theorem}
	Assume $n = 2$. Then
	\[
	B = e^{-\sqrt{2\t} + o(\sqrt{\t})}.
	\]
	We also have
	\[
		g(t) = \sqrt{t} e^{-\sqrt{-\frac{1}{2}\log t} + o(\sqrt{-\log t})}
	\]
	and
	\[
		g' \leq \frac{1}{\sqrt{t}} e^{-c\sqrt{-\log t}}.
	\]
\end{theorem}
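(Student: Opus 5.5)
The plan is to run the same ODE reduction as in the case $n \geq 3$, with $B$ as the unknown, but using the logarithmic form of Theorem \ref{thm:asymptotic}. For $n=2$ we have $h^{n/2} = h$, so the identities above read
\[
A(e^{-\tau}) = \tfrac12 B(\tau) + \tfrac12 h(\tau), \qquad \dot B = -h, \qquad B(\tau) = \int_\tau^\infty h \, d\tau' .
\]
Theorem \ref{thm:asymptotic}, rewritten in the variables $\tau, h$, says
\[
A = \tfrac12\, h \log \tfrac1h \,(1+o_\tau(1)).
\]

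\emph{Step 1: $B$ in terms of $h$.} Since $h \to 0$, the term $\tfrac12 h$ is $o(h\log\frac1h)$ and can be absorbed, which gives
\[
B = h \log\tfrac1h\,(1+o_\tau(1)).
\]
Note $B>0$ because $h>0$, and $B\to 0$. Taking logarithms,
\[
\log\tfrac1B = \log\tfrac1h - \log\log\tfrac1h + o(1) = \log\tfrac1h\,(1+o(1)).
\]
Hence, writing $\mathcal L := \log\frac1B \to \infty$, we get $h = \frac{B}{\mathcal L}(1+o(1))$.

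\emph{Step 2: integrate the ODE for $\mathcal L$.} From $\dot B = -h$ we obtain
\[
\dot{\mathcal L} = -\frac{\dot B}{B} = \frac{h}{B} = \frac{1+o(1)}{\mathcal L}, \qquad\text{that is,}\qquad \frac{d}{d\tau}\mathcal L^2 = 2 + o(1).
\]
Integrating from a large $\tau_*$ to $\tau$ and letting $\tau\to\infty$ gives $\mathcal L^2 = 2\tau(1+o(1))$. Therefore $\mathcal L = \sqrt{2\tau} + o(\sqrt\tau)$, i.e.
\[
B = e^{-\sqrt{2\tau}+o(\sqrt\tau)} .
\]

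\emph{Step 3: back to $g$.} By Step 1, $h = B/(\mathcal L(1+o(1)))$. Since $\log \mathcal L = O(\log\tau) = o(\sqrt\tau)$, the factor $\mathcal L$ is absorbed into the exponent, and $h = e^{-\sqrt{2\tau}+o(\sqrt\tau)}$. With $\tau = -\log t$ and $g = \sqrt{t h}$, this gives
\[
g(t) = \sqrt t\, e^{-\sqrt{-\frac12\log t}+o(\sqrt{-\log t})}.
\]
For the derivative, Lemma \ref{lem:doubling} gives $g' \leq C g/t = C t^{-1/2} h^{1/2}$. This equals $t^{-1/2}e^{-\sqrt{\tau/2}+o(\sqrt\tau)}$, and the constant $C$ is absorbed for small $t$, so $g' \leq t^{-1/2}e^{-c\sqrt{-\log t}}$ for any $c<1/\sqrt2$.

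The main point needing care is Step 1. There the $o(1)$ from Theorem \ref{thm:asymptotic} must be converted into a relative error in $\log\frac1B$ versus $\log\frac1h$. This works because $\log\log\frac1h = o(\log\frac1h)$ and $\log(1+o(1)) = o(1)$. Every subsequent error enters only at order $o(\sqrt\tau)$ in the exponent, which is exactly the precision claimed in the statement.
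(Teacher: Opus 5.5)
Your proposal is correct and follows the paper's proof essentially step for step. The paper also absorbs $h$ to get $B = h\log\frac1h(1+o(1))$, sets $w=-\log B$ (your $\mathcal L$), and uses $\dot B=-h$ to get $\dot w = \frac1w(1+o(1))$. It then integrates to $w=\sqrt{2\tau}(1+o(1))$, recovers $h = B\dot w$, and derives the bound on $g'$ from Lemma \ref{lem:doubling}, exactly as you do; your explicit range $c<1/\sqrt2$ is a harmless sharpening of the paper's unspecified constant.
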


\begin{proof}
	We define $h, A, B$ in exactly the same way, and $A = \frac{1}{2} B + \frac{1}{n} h^{n/2}$ remains valid. The asymptotic relation is instead
	\[
		 B +  h = h \log \frac{1}{h} (1 + o_\t(1)),
	\]
	and reabsorbing gives
	\[
		 B = h \log \frac{1}{h} (1 + o_\t(1)).
	\]
	Here it helps to take another logarithm, $w = - \log B$, to get
	\[
		\dot{w} = \frac{1}{\log 1/h}(1 + o_\t(1)).
	\]
	using $h = - \dot{B}$. As just taking logs of the inequality for $B$ instead gives 
	\[
		w = -\log B = - \log h - \log (-\log h) + o(1) = - \log h (1 + o_\t(1))
	\]
	after reabsorbing the constant and double-log terms, this can be expressed as
	\[
		\dot{w} = \frac{1}{w}(1 + o_\t(1)).
	\]
	Integrating leads to
	\[
		w = \sqrt{2 \t} (1 + o_\t(1))
	\]
	and so
	\[
		B = e^{-\sqrt{2\t} + o(\sqrt{\t})}.
	\]
	Rewriting in terms of $g$ gives the other conclusions:
	\[
		h = B \dot{w} = e^{-\sqrt{2\t} + o(\sqrt{\t})} \frac{1}{\sqrt{\t}} = e^{-\sqrt{2\t} + o(\sqrt{\t})},
	\]
	so $g(t) = \sqrt{t} e^{-\sqrt{-\frac{1}{2}\log t} + o(\sqrt{-\log t})}$. The bound on $g'$ follows from Lemma \ref{lem:doubling}, after reabsorbing the constant into the exponential.
\end{proof}

\section{Appendix}\label{sec:appendix}
\subsection{A gradient estimate}
\begin{lemma}[Boundary gradient estimate with bounded drift]
\label{lem_append}
Let $B\geq0$ and let $b$ be a bounded continuous function
on $(0,1]\times(0,1)$ with $|b|\leq B$.
Suppose that $f$ is continuous on $[0,1]\times[0,1]$,
is a classical solution of
\[
    f_a-f_{yy}+b(a,y)f_y=0
    \qquad\text{in }(0,1]\times(0,1),
\]
and has one-sided spatial derivatives at $y=0$ for
positive times. Assume that
\[
    0\leq f\leq M,
    \qquad f(a,0)=0\quad(0\leq a\leq1).
\]
Then
\[
    0\leq f_y(a,0+)\leq C(B)M
    \qquad\text{for }3/4\leq a\leq1.
\]
The constant depends only on $B$, and not on any
derivatives or modulus of continuity of $b$.
\end{lemma}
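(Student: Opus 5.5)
The plan is to prove Lemma \ref{lem_append} by a barrier argument. Only the maximum principle is used, so no regularity of $b$ is needed. The lower bound $f_y(a,0+)\geq 0$ is immediate: $f\geq 0$ in the interior and $f(a,0)=0$, so each difference quotient $f(a,y)/y$ is nonnegative, and so is its limit. The real content is the upper bound. For that we construct an explicit supersolution $\psi(a,y)$ of the operator $\mathcal L\psi=\psi_a-\psi_{yy}+b\psi_y$ on a region $(a_0,1]\times(0,\rho)$, where $a_0<3/4$ and $\rho$ are fixed. It should satisfy $\psi(a,0)=0$ and $\psi\geq M$ on the rest of the parabolic boundary, and it should be Lipschitz at $y=0$ with slope $C(B)M$. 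Comparison then gives $f\leq\psi$, and since $f(a,0)=\psi(a,0)=0$ we get $f_y(a,0+)\leq\psi_y(a,0)$.

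The barrier will be built from a concave profile in $y$ plus a time cutoff. Set $\phi(y)=1-e^{-\lambda y}$ with $\lambda=\lambda(B)$ large. Then
\[
-\phi''+b\phi'=\lambda e^{-\lambda y}(\lambda+b)\geq \lambda e^{-\lambda y}(\lambda-B)\geq \lambda(\lambda-B)e^{-\lambda}
\]
on $[0,1]$, which is positive once $\lambda\geq 2B+1$. To handle the initial time we add a term that is large at $a=a_0$: take
\[
\psi(a,y)=M\Bigl(K\phi(y)+\frac{y\,\chi(a)}{\rho}\Bigr),
\]
or, more simply, work on the region $(1/2,1]\times(0,1)$ with
\[
\psi=M\Bigl(K\phi(y)+4(1-\phi(1))^{-1}\ldots\Bigr).
\]
Cleaner still is the following choice. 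On $(1/2,1]\times(0,1)$ take $\psi=M\bigl[K\phi(y)+\eta(a)\,y\bigr]$, where $\eta$ is nonincreasing, $\eta(1/2)=+\infty$ and $\eta\equiv 0$ on $[3/4,1]$. The simplest version uses $\eta(a)=\bigl((3/4-a)_+/(a-1/2)\bigr)^2$, which blows up at $a=1/2$, so $\psi\geq f$ near the initial time automatically. Here $K=K(B)$ is chosen so that $K\phi(1)\geq 1$, i.e. $\psi(a,1)\geq M$.

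The step I expect to be the main obstacle is checking $\mathcal L\psi\geq 0$ on the whole region. The term $M\eta' y\leq 0$ is harmful, while the term $Mb\eta$ may have either sign. Both must be dominated by $MK\lambda(\lambda-B)e^{-\lambda}$, and this is problematic because $\eta$ blows up. I would avoid the issue by not using a blowing-up term at all and instead comparing on the parabolic boundary with the constant $M$. Concretely, use $\psi=\min\{M,\ MK\phi(y)+M\,\Theta(a)\}$ with $\Theta$ a decreasing linear function of $a$, $\Theta(1/2)\geq 1$ and $\Theta(3/4)=0$. The minimum of two supersolutions is a supersolution, and the constant $M$ is one. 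The second piece satisfies
\[
\mathcal L\bigl(MK\phi+M\Theta\bigr)=M\bigl(K(-\phi''+b\phi')+\Theta'\bigr)\geq M\bigl(K\lambda(\lambda-B)e^{-\lambda}-4\bigr),
\]
which is $\geq 0$ for $K$ large depending only on $B$. Only the bound $|b|\leq B$ enters here, which is the point of the lemma. On $y=0$ the barrier is not zero when $\Theta>0$, but this is harmless: since $f(a,0)=0$, we still have $f\leq\psi$ on the whole parabolic boundary of $(1/2,1]\times(0,1)$.

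Comparison therefore gives $f\leq\psi$. For $a\geq 3/4$ we have $\Theta=0$, so $f(a,y)\leq MK\phi(y)\leq MK\lambda y$. Dividing by $y$ and letting $y\to 0^+$ gives $f_y(a,0+)\leq K\lambda M=C(B)M$. One technical point needs care: applying the comparison principle when $b$ is merely continuous and bounded, and $\psi$ is only Lipschitz (a minimum of smooth functions). I would handle this by the standard argument that $f-\psi-\varepsilon(a-1/2)$ cannot attain a positive interior maximum. At a point where $\psi$ equals the smooth piece this uses the strict inequality. At points where $\psi=M$, we already have $f\leq M$.
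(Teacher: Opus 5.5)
Your final argument is correct, and it uses a different barrier from the paper's. The paper builds one smooth supersolution on the whole cylinder $(0,1]\times(0,1)$, namely $W=K_BM\,e^{-By/2+3B^2a/4}\operatorname{erf}(y/(2\sqrt a))$. The exponential gauge factor absorbs the drift, giving $\mathcal L W=K_BM E\bigl[\tfrac{B(B-b)}{2}F+(B+b)F_y\bigr]\geq 0$. The erf factor makes $W$ vanish on $\{y=0\}$ and tend to $K_BMe^{-By/2}\geq M$ as $a\searrow 0$, so the initial layer is built in. The cost is a separate corner argument at $(0,0)$ that uses continuity of $f$. The gain is that the barrier is smooth, the comparison is a single step, and the constant is explicit.

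You instead take the time-independent concave profile $1-e^{-\lambda y}$. Its strict margin $\lambda(\lambda-B)e^{-\lambda}$ pays for a decreasing time cutoff $M\Theta(a)$ that lifts the barrier above $M$ on $\{a=1/2\}$, and you cap the result at $M$. This uses $f$ only on $[1/2,1]\times[0,1]$ and avoids the corner altogether. It also happens to give a constant of order $e^{2B}$ instead of $e^{3B^2/4}$, which is irrelevant here since $B=C(n)$. The price is that your barrier is only Lipschitz, so you need the hand-made maximum principle argument. That argument does go through: at a positive maximum of $f-\psi$ you have $\psi<f\leq M$, so $\psi$ agrees with $MK\phi+M\Theta$ in a neighbourhood, and the strict inequality there gives the contradiction.

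Fix the following points in a write-up.
\begin{itemize}
\item $\Theta$ must be $4(3/4-a)_+$, not literally linear. A linear $\Theta$ would be negative for $a>3/4$, and then $\psi(a,0)<0=f(a,0)$, so comparison fails on $\{y=0\}$.
\item At the kink $a=3/4$, use only the left time derivative at the maximum point. This gives $f_a\geq M\Theta'(a_0^-)\geq -4M$, which is exactly what the strict inequality absorbs. Alternatively, take $\Theta$ smooth and nonincreasing with $|\Theta'|\leq 5$ and choose $K$ so that $K\lambda(\lambda-B)e^{-\lambda}>5$.
\item State both conditions on $K$ explicitly in the final version: $K\phi(1)\geq 1$ and the strict supersolution inequality.
\item Delete the abandoned intermediate constructions: the truncated formula and the blowing-up $\eta$.
\end{itemize}
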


\begin{proof}
Set
\[
    F(a,y):=\operatorname{erf}
       \left(\frac{y}{2\sqrt a}\right),
    \qquad
    E(a,y):=\exp\left(-\frac{By}{2}
                        +\frac{3B^2a}{4}\right),
\]
where
\[
    \operatorname{erf}(z)
    :=\frac{2}{\sqrt\pi}\int_0^z e^{-\xi^2}\,d\xi.
\]
Define
\[
    K_B:=\frac{e^{B/2}}{\operatorname{erf}(1/2)},
    \qquad W(a,y):=K_BM E(a,y)F(a,y).
\]
Since $F_a=F_{yy}$, direct computation gives
\[
    (\partial_a-\partial_{yy}+b\partial_y)W
    =
    K_BM E
    \left[
        \frac{B(B-b)}{2}F+(B+b)F_y
    \right]
    \geq0.
\]
Moreover, $W(a,0)=0$, and, for $0<a\leq1$,
\[
    W(a,1)\geq
    K_BM e^{-B/2}\operatorname{erf}(1/2)=M.
\]
For every fixed $y\in(0,1]$,
\[
    \lim_{a\searrow0}W(a,y)
    =K_BM e^{-By/2}\geq M.
\]
At the corner $(0,0)$, continuity of $f$, the identity
$f(0,0)=0$, and $W\geq0$ imply
\[
    \limsup_{\substack{(a,y)\to(0,0)\\a,y>0}}
    (f-W)(a,y)\leq0.
\]
The parabolic comparison principle therefore gives
$f\leq W$. Taking right derivatives at $y=0$, where
$f=W=0$, yields
\[
    0\leq f_y(a,0+)
    \leq W_y(a,0+)
    =\frac{K_BM e^{3B^2a/4}}{\sqrt{\pi a}}.
\]
For $3/4\leq a\leq1$, the right-hand side is bounded by
\[
    \frac{2e^{B/2+3B^2/4}}
         {\sqrt{3\pi}\operatorname{erf}(1/2)}M,
\]
which proves the claim.
\end{proof}
\subsection{Lipschitz regularity of the free boundary}
Here we sketch the proof of Lipschitz regularity of the free boundary satisfying the assumptions (1-4).
The idea is to transform the radial solution to a 1D solution of a Stefan problem with H\"older coefficients.

In this subsection $t$ denotes physical time. We write
\[
 \gamma(t):=g(-t),\qquad -1<t<0,
 \qquad W(t,x):=x^{(n-1)/2}u(t,x).
\]
Thus the positive phase is $\{x>\gamma(t)\}$. Work on a compact
time interval on which the interface remains a positive distance
from $x=0$ and from the chosen spatial endpoints.

Then $W$ is a solution in the sense of distributions of
$$\left(W + a(r) \chi_{\{W>0\}}\right)_t = W_{rr} + c(r) W,$$
where $a(r) = r^{(n-1)/2}$ and $c(r)=-(n-1)(n-3)/(4r^2)$ are locally H\"older continuous 
and $a$ is strictly positive in $r>0$.

We modify the regularity theory of the 1D Stefan problem in \cite{FPK} by A. Fasano, M. Primicerio and S. Kamin
to the equation satisfied by $W$:

Standard energy estimates yield
$$W_t \in L^2, W_x \in L^\infty(L^2) \text{ locally in } r>0.$$
As $W_{xx} = W_t - c W$ in each phase, we obtain also that
the restriction of $W_{xx}$ to each phase is in $L^2$ locally in $r>0$.

Let now for some time interval $J$ 
$$\gamma(t)\in [\ell+\delta,r-\delta], \text{ where } 0<\ell<\ell+\delta \leq \gamma(t) \leq r-\delta < r.$$
Then 
$$B(t)^2 := \int_\ell^{\gamma(t)} |W_{xx}(t,\cdot)|^2 dx + \int_{\gamma(t)}^r |W_{xx}(t,\cdot)|^2 dx$$
satisfies 
$$\int_J B(t)^2 dt < \infty.$$
Let $I_+(t) = (\gamma(t),r)$ and $I_-(t) = (\ell, \gamma(t))$ and choose
for a.e. $t$ points $x^0_\pm$ such that
$$|W_x(t,x^0_\pm)|^2 \leq \frac{1}{|I_\pm(t)|} \int_{I_\pm(t)} |W_x(t,\cdot)|^2 dx.$$
It follows that
\begin{align*}
 &|W_x(t,x)|^2 \leq |W_x(t,x^0_\pm)|^2 + 2 \int_{I_\pm(t)} |W_x(t,\cdot)W_{xx}(t,\cdot)| dx\\ &\quad\quad
 \leq \frac{1}{|I_\pm(t)|} \int_{I_\pm(t)} |W_x(t,\cdot)|^2 dx + 2 \Vert W_x(t,\cdot)\Vert_{L^2(I_\pm(t))}\Vert W_{xx}(t,\cdot)\Vert_{L^2(I_\pm(t))},\\
 &\Vert W_x(t,\cdot)\Vert_{L^\infty(I_\pm(t))}^2 \leq \delta^{-1} \Vert W_x\Vert_{L^\infty(L^2)}^2 + 2 \Vert W_x\Vert_{L^\infty(L^2)} B(t),\\
  &\Vert W_x(t,\cdot)\Vert_{L^\infty(I_\pm(t))}^4 \leq 2\delta^{-2} \Vert W_x\Vert_{L^\infty(L^2)}^4 + 8 \Vert W_x\Vert_{L^\infty(L^2)}^2 B^2(t),\\
  &\int_J\Vert W_x(t,\cdot)\Vert_{L^\infty(I_\pm(t))}^4 dt \leq C |J| \delta^{-2} \Vert W_x\Vert_{L^\infty(L^2)}^4 + C \Vert W_x\Vert_{L^\infty(L^2)}^2  \int_J B^2(t) dt <\infty.
\end{align*}
From the equation for $W$ in the sense of distributions we obtain now after approximation
with smooth cutoffs that
$$\frac{d}{dt} \left( \int_\ell^r W(t, \cdot) dx - A(\gamma(t))\right) = W_x(t,r) - W_x(t,\ell) + \int_\ell^r c W dx$$
in the sense of distributions,
where $A(y)=\int_\ell^y a(x) dx$ is the primitive of $a$.
From the energy estimates we infer that $A(\gamma(t))$ is absolutely continuous which implies that $\gamma$ is absolutely continuous,
that
\begin{equation}\label{flux_eq} a(\gamma(t)) \gamma'(t) = [-W_x(t,\gamma(t))],\end{equation}
and by the above $L^4$-estimate, that $\gamma'$ is locally in $L^4$.

Next, we straighten the interface, setting $y=x-\gamma(t)$ and $Z(t,y)=W(t,y+\gamma(t))$, and obtaining that $Z(t,0)=0$
and that $$Z_t-Z_{yy} = \gamma'(t) Z_y + c(y+\gamma(t)) Z \text{ in } \{ y<0 \}\cup \{ y>0 \},$$
where the second derivative is taken in the sense of distributions.
We already know that $\gamma' \in L^4$ and $Z_y \in L^4(L^\infty)$ locally,
so the right-hand side of this PDE is in $L^2(L^\infty)$ locally.

Choose cutoffs $\eta$ and $\psi$ such that $\eta=0$ near and before the initial time
and $\psi=0$ near and right to the spatial endpoint,
and extend $Z$ by $V := \eta(t) \psi(y) Z$ to $y>0$.
The resulting right-hand side $F$ is still in $L^2(L^\infty)$.
Using the heat kernel for the half line $y>0$,
$$G_H(t,y,z) := \frac{1}{\sqrt{4\pi t}} \left(e^{-(y-z)^2/(4t)}- e^{-(y+z)^2/(4t)}\right),$$
we obtain for $t_0< t < t_1 <0$
$$V(t,y) = \int_{t_0}^t \int_0^\infty G_H(t-s,y,z) F(s,z) dz ds \text{ for } y,z>0,$$
where
$$ \sup_{y>0} \int_0^\infty |\partial_y G_H(t,y,z)| dz \leq C t^{-1/2}.$$
It follows that
$$ \Vert V_y(t,\cdot)\Vert_{L^\infty} \leq C \int_{t_0}^t (t-s)^{-1/2}  \Vert F(s,\cdot) \Vert_{L^\infty} ds.$$
Consequently, setting $k(t) := t^{-1/2}$, noting that $5/8 + 1/2 = 1 + 1/8$ and using Young's convolution inequality on a finite interval, we obtain that 
$$\Vert V_y\Vert_{L^8(L^\infty)} \leq C \Vert k\Vert_{L^{8/5}(0,t_1-t_0)} \Vert F\Vert_{L^2(L^\infty)}. $$
Hence $Z_y \in L^8(L^\infty)$ locally.
Choosing secondary cutoffs with support in the regions where $\eta$ and $\psi$ equal $1$, we obtain that
the new right-hand side $F_2 \in L^{8/3}(L^\infty)$.
H\"older's inequality implies that the new $V_2$ satisfies
$V_{2,y}\in L^\infty$ locally, which implies $Z_y\in L^\infty$ locally,
which proves the desired boundedness of $W_x$ in the positive phase.
Repeating the argument for $\tilde Z(t,y)=W(t,\gamma(t)-y)$ and combining
the two estimates, we obtain that
$W_x\in L^\infty$ locally.
Applying once more \eqref{flux_eq}, we obtain that $\gamma'\in L^\infty$ locally
which proves the local Lipschitz continuity. 

\section*{AI Use}

GPT 5.4-6 was used to obtain preliminary versions of some of the computations in this paper, including the estimates on the heat kernel differences used in Section \ref{sec:potential} and some of the barrier constructions in Section \ref{sec:pde}. The estimates actually appearing in the paper have been heavily simplified and revised in places, and all of the text is written by the authors. The core idea of the paper, to turn the potential representation of the solution into a differential equation for the free boundary with enough information to capture the fine asymptotics at a singularity, had no AI input. The authors take responsibility for the correctness of all the results presented.

\section*{Acknowledgments}

DK was supported by NSF DMS grants 2247096 and 2554417.

\bibliographystyle{plainurl}
\bibliography{stefan_arxiv}

\end{document}